\documentclass{amsart}

\usepackage[utf8]{inputenc}
\usepackage[charter]{mathdesign}

\usepackage{graphicx}
\usepackage{epsfig}
\usepackage{times}
\usepackage{mathptmx}
\usepackage{amsmath}
\usepackage{dcolumn}
\usepackage{multicol}

\usepackage{xspace}
\usepackage{wrapfig}
\usepackage{todonotes}
\usepackage{hyperref}

\newtheorem{theorem}{Theorem}[section]
 
\newtheorem{lemma}[theorem]{Lemma}
\newtheorem{corollary}[theorem]{Corollary}
\newtheorem{example}[theorem]{Example}
\newtheorem*{claim}{Claim}

\theoremstyle{definition}
\newtheorem{definition}[theorem]{Definition}

\newtheorem{question}[theorem]{Question}

\theoremstyle{remark}

\newcommand{\eqdef}{\stackrel{def}{=}}

\newcommand{\la}{\langle}
\newcommand{\ra}{\rangle}

\newcommand{\imp}{\rightarrow}

\newcommand{\biimp}{\leftrightarrow}
\newcommand{\Nb}{\mathbb{N}}

\newcommand{\Rcal}{\mathcal{R}}

\renewcommand{\setminus}{\smallsetminus}
\newcommand{\Bool}{\mathbb{B}}

\newcommand{\true}{\textup{\texttt{T}}}
\newcommand{\false}{\textup{\texttt{F}}}

\newcommand{\p}[1]{\left( #1 \right)}
\newcommand{\set}[1]{\left\{ #1 \right\}}

\newcommand{\tuple}[1]{\left\langle #1 \right\rangle}



\newcommand{\s}[1]{\ensuremath{{\sf{#1}}}}
\newcommand{\rca}{\s{RCA}_0}
\newcommand{\wkl}{\s{WKL}_0}
\newcommand{\wwkl}{\s{WWKL}_0}
\newcommand{\dnr}{\s{DNR}}

\newcommand{\rkl}{\s{RKL}}
\newcommand{\rwkl}{\s{RWKL}}

\newcommand{\srt}{\s{SRT}}
\newcommand{\sat}{\s{SAT}}
\newcommand{\issat}{\s{ISAT}}
\newcommand{\rsat}{\s{RSAT}}
\newcommand{\lrsat}{\s{LRSAT}}
\newcommand{\rsats}[1]{\s{RSAT}(#1)}
\newcommand{\lrsats}[1]{\s{LRSAT}(#1)}
\newcommand{\sats}[1]{\s{SAT}(#1)}
\newcommand{\opvars}[1]{\s{Var}(#1)}
\newcommand{\opass}[1]{\s{Assign}(#1)}

\newcommand{\lrsatzerovalid}{\lrsats{\false\mbox{-valid}}}

\newcommand{\lrsatonevalid}{\lrsats{\true\mbox{-valid}}}

\newcommand{\lrsathorn}{\lrsats{\mbox{Horn}}}

\newcommand{\lrsatcohorn}{\lrsats{\mbox{CoHorn}}}
\newcommand{\rsataffine}{\rsats{\mbox{Affine}}}
\newcommand{\lrsataffine}{\lrsats{\mbox{Affine}}}
\newcommand{\rsatbijunctive}{\rsats{\mbox{Bijunctive}}}
\newcommand{\lrsatbijunctive}{\lrsats{\mbox{Bijunctive}}}
\newcommand{\scolor}{\s{COLOR}}
\newcommand{\rcolor}{\s{RCOLOR}}
\newcommand{\lrcolor}{\s{LRCOLOR}}
\newcommand{\poly}[1]{\s{Pol}\p{#1}}
\newcommand{\inv}[1]{\s{Inv}\p{#1}}
\newcommand{\coclone}[1]{\tuple{#1}}

%

\hypersetup{
	pdfborder={0 0 0}
}


\usepackage{xcolor}	
\usepackage{soul}

\definecolor{lightblue}{rgb}{.60,.60,1}

\definecolor{lightred}{rgb}{1,.60,.60}


\title{The complexity of satisfaction problems\\ in reverse mathematics}
\author{
  Ludovic Patey
}

\address{
Laboratoire PPS, Universit\'e Paris Diderot, Paris, FRANCE}
\email{ludovic.patey@computability.fr}

\date{\today}

\begin{document}

\maketitle

\begin{abstract}
Satisfiability problems play a central role in computer science and engineering
as a general framework for studying the complexity of various problems.
Schaefer proved in 1978 that truth satisfaction of propositional formulas
given a language of relations is either NP-complete or tractable.
We classify the corresponding satisfying assignment construction problems
in the framework of reverse mathematics and show that the principles are either provable
over $\rca$ or equivalent to $\wkl$. We formulate also a Ramseyan version of the problems
and state a different dichotomy theorem. However, the different classes arising from this classification
are not known to be distinct.
\end{abstract}

\section{Introduction}

A common way to solve a constrained problem in industry consists
in reducing it to a satisfaction problem over propositional logic and using 
a SAT solver. The generality of the framework and its multiple applications 
make it a natural subject of interest for the scientific community and constraint satisfaction problems remains
an active field of research.

In 1978, Schaefer \cite{schaefer1978complexity} gave a great insight in the understanding of the complexity
of satisfiability problems by studying a parameterized class of problems
and showing they admit a dichotomy between NP-completeness and tractability.
Many other dichotomy theorems have been proven since, about refinements to~$AC^0$ reductions \cite{allender2005complexity},
variants about counting, optimization, 3-valued domains and many others 
\cite{creignou1996complexity,khanna1996optimization,bulatov2002dichotomy}.
The existence of dichotomies for $n$-valued domains with $n > 3$ remains open.

Reverse mathematics is a vast program for the classification of the strength of mathematical theorems. 
It uses proof theoretic methods to reveal the computational content of theorems. This study has led to the main observation
that many theorems are computationally equivalent to one of four axioms. One particular axiom
is Weak König's lemma ($\wkl$) which allows formalization of many compactness arguments 
and the solution to many satisfiability problems.
We believe that studying constraint satisfaction problems (CSP) within this framework
can lead to insights in both fields: in reverse mathematics, we can exploit the generality of constraint satisfaction
problems to compare existing principles by reducing them to satisfaction problems.
In CSP, reverse mathematics can yield a better understanding of the computational strength of satisfiability problems for 
particular classes of formulas. In particular we answer the question of Marek \& Remmel~\cite{marek2009complexity}
whether there are dichotomy theorems for infinite recursive versions of constraint satisfaction problems.\footnote{
This paper is an extended version of a conference paper of the same name published in CiE 2014.}

\begin{definition}
Let $\Bool = \{\false, \true\}$ be the set of Booleans.
An (infinite) set of Boolean formulas $C$ is \emph{finitely satisfiable} if every conjunction of a finite set of formulas in $C$
is satisfiable. 
$\sat$ is the statement  
``For every finitely satisfiable set $C$ of Boolean formulas over an infinite set of variables $V$, 
there exists an infinite assignment  $\nu: V \to \Bool$ satisfying $C$.''
The pair $(V, C)$ forms an \emph{instance} of~$\sat$.
\end{definition}

The base axiom system for reverse mathematics is called $\rca$, standing for Recursive Comprehension Axiom.
It consists of basic Peano axioms together with a comprehension scheme restricted to~$\Delta^0_1$
formulas and an the induction restricted to~$\Sigma^0_1$ formulas. 

\begin{theorem}[Simpson~\cite{simpson2009subsystems}]\label{thm:wkl-sat}
$\rca \vdash \wkl \biimp \sat$
\end{theorem}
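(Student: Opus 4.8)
The plan is to prove the two implications separately inside $\rca$, exploiting the well-known correspondence between infinite binary trees and the "partial-assignment" tree of a propositional theory.

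For the direction $\wkl \imp \sat$, I would start from an instance $(V,C)$ of $\sat$ with $V$ infinite; since $V$ is given as an infinite set I may assume $V = \{v_0, v_1, \dots\}$ enumerated via a bijection with $\Nb$ provided by $\rca$. The key step is to build a subtree $T \subseteq \str$ whose nodes of length $n$ code exactly those partial assignments $\sigma : \{v_0,\dots,v_{n-1}\} \to \Bool$ that do not yet falsify any formula of $C$ mentioning only those variables. Formally, $\sigma \in T$ iff for every finite $F \subseteq C$ with $\opvars{F} \subseteq \{v_0,\dots,v_{n-1}\}$, $\sigma$ satisfies $\bigwedge F$. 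This predicate is $\Delta^0_1$ in the data $(V,C)$ — the quantifier over finite subsets $F$ is bounded once $n$ is fixed, because a formula mentioning only $v_0,\dots,v_{n-1}$ is (up to logical equivalence of interest here) determined by its behaviour on the finitely many assignments of length $n$ — so $T$ exists by $\Delta^0_1$ comprehension; it is clearly a tree (closed under initial segments). The next step is to check $T$ is infinite: this is exactly where finite satisfiability is used. Given $n$, the finitely many formulas of $C$ with variables among $\{v_0,\dots,v_{n-1}\}$ form a finite satisfiable conjunction, so some total assignment on $\{v_0,\dots,v_{n-1}\}$ satisfies it, and that assignment is a node of $T$ at level $n$. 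Applying $\wkl$ to $T$ yields an infinite path, i.e. a function $\nu : V \to \Bool$ every initial segment of which lies in $T$; one then verifies that $\nu$ satisfies every formula $\varphi \in C$, since $\varphi$ has finitely many variables, all among $\{v_0,\dots,v_{m-1}\}$ for some $m$, and $\nu \uh \{v_0,\dots,v_{m-1}\} \in T$ forces $\varphi$ to be satisfied.

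For the converse $\sat \imp \wkl$, I would argue inside $\rca$ by encoding an arbitrary infinite tree $S \subseteq \str$ as a $\sat$-instance. Take variables $V = \{p_\sigma : \sigma \in \str\}$, with the intended meaning "$p_\sigma$ is true iff $\sigma$ is an initial segment of the path." The clause set $C$ should say: $p_\emptyset$ holds; if $p_\sigma$ holds then exactly one of $p_{\sigma^\frown 0}, p_{\sigma^\frown 1}$ holds; if $p_\sigma$ holds then $\sigma \in S$; and if $\sigma \notin S$ then $\lnot p_\sigma$. Each of these is a finite Boolean formula (the "exactly one of two" is a $2$-clause conjunction), so $C$ is a legitimate $\sat$-instance, uniformly $\Delta^0_1$ in $S$. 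Finite satisfiability of $C$ follows from $S$ being an infinite tree: any finite subset of $C$ mentions only $p_\sigma$ for $\sigma$ in some finite set, and since $S$ is infinite it has a node $\tau$ of length exceeding all those $\sigma$; setting $p_\sigma = \true$ for $\sigma \subseteq \tau$ and $\false$ otherwise satisfies the chosen finite fragment (here one uses that $S$ is closed under initial segments so all such $\sigma \subseteq \tau$ lie in $S$). Then a satisfying assignment $\nu$ for $C$ defines a path: $\{\sigma : \nu(p_\sigma) = \true\}$ is, by the clauses, a chain through $S$ of order type $\omega$, equivalently a function $f \in \cs$ with $f \uh n \in S$ for all $n$. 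Verifying that this set really is linearly ordered and unbounded (so that $f$ is total) uses $\Sigma^0_1$ induction along the "exactly one successor" clauses.

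The main obstacle, and the step deserving the most care, is the $\Delta^0_1$-definability of the tree $T$ in the first implication: a priori "$\sigma$ does not falsify any formula of $C$ with variables among $\{v_0,\dots,v_{n-1}\}$" has an unbounded existential quantifier ranging over all of $C$. The fix — and the point to spell out — is that whether a formula $\varphi$ is falsified by a fixed assignment $\sigma$ of length $n$ is decidable from $\varphi$ and $\sigma$ alone, and that $\sigma \notin T$ is equivalent to the $\Sigma^0_1$ statement "some $\varphi \in C$ with $\opvars{\varphi}\subseteq\{v_0,\dots,v_{n-1}\}$ is falsified by $\sigma$", whose complement is also $\Sigma^0_1$ by symmetry once we observe there are only finitely many assignments of length $n$ and use that $T$'s levels are finite; combined with the infinitude argument this gives a $\Delta^0_1$ definition, or one argues directly with a bounded tree as in Simpson's standard treatment of $\wkl$. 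I expect the rest to be routine bookkeeping about coding finite Boolean formulas and assignments as numbers, which $\rca$ handles in the usual way.
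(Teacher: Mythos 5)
Your forward direction follows the same route as the paper (the tree of non-falsifying partial assignments), but the step you yourself flag as the main obstacle is not actually resolved by your primary fix. With your definition of $T$ --- $\sigma \in T$ iff $\sigma$ satisfies \emph{every} $\varphi \in C$ with $\opvars{\varphi} \subseteq \{v_0,\dots,v_{n-1}\}$ --- membership is genuinely $\Pi^0_1$, since $C$ may contain infinitely many formulas over a fixed finite set of variables. The ``by symmetry'' argument does not repair this: knowing that level $n$ is finite and that the bad nodes are $\Sigma^0_1$ only makes the good nodes a co-c.e.\ subset of a finite set, which is not uniformly decidable in $n$, so $\Delta^0_1$ comprehension does not apply. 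The correct fix --- which you mention only in passing as an alternative --- is the one the paper uses: at level $n$, test $\sigma$ only against the formulas of $C$ coded by numbers less than $n$ (and count a formula as ``not falsified'' when some of its variables lie beyond $v_{n-1}$). This makes the defining condition bounded, hence $\Delta^0_1$, while the tree remains downward closed and infinite by finite satisfiability. As written, your proof of $\wkl \imp \sat$ has a gap; the repair is standard but needs to be the actual definition, not a footnote.

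Your converse is a genuinely different encoding from the paper's. The paper uses one variable $x_i$ per level and, for each $n$, the single formula $\bigvee_{\sigma \in T^n} \theta_\sigma$ (the disjunction over level-$n$ nodes of their conjunctive descriptions); a satisfying assignment then \emph{is} the path, read off directly as $\{i : \nu(x_i) = \true\}$ with no extraction step. You instead use one variable $p_\sigma$ per string with local ``root plus exactly-one-successor plus $\neg p_\sigma$ for $\sigma \notin S$'' constraints. This works and keeps each formula small, but one claim is inaccurate: the set $\{\sigma : \nu(p_\sigma) = \true\}$ need not be a chain, since nothing forces $p_\sigma \imp p_{\sigma'}$ for $\sigma' \prec \sigma$, so a satisfying assignment may switch on whole spurious branches. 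No induction will prove linearity because it is simply false in general. The fix is easy and should be stated: extract the path by primitive recursion from the root, at each step following the unique true successor guaranteed by the clauses; each node visited satisfies $p_\sigma$ and hence lies in $S$. With these two repairs the argument goes through.
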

\begin{proof}
$\wkl \imp \sat$: Let $C$ be a finitely satisfiable set of formulas over a set of variables $V$.
Let $\la x_i \mid i \in \Nb \ra$ enumerate $V$.  For each $\sigma \in 2^{<\Nb}$, identify $\sigma$ with the truth assignment $\nu_\sigma$ on $\{x_i \mid i < |\sigma|\}$ given by $(\forall i < |\sigma|)(\nu_\sigma(x_i) = \true \biimp \sigma(i) = 1)$.  Let $T \subseteq 2^{<\Nb}$ be the tree
$T = \{\sigma \in 2^{<\Nb} \mid \neg(\exists \theta \in C \restriction |\sigma|)(\nu_\sigma(\theta) = \false)\}$,
where $C \restriction |\sigma|$ is the set of formulas in $C$ coded by numbers less than $|\sigma|$, and $\nu_\sigma(\theta)$ is the truth value assigned to~$\theta$ by $\nu_\sigma$ (note that $\nu_\sigma(\theta)$ is undefined if $\theta$ contains a variable $x_m$ for $m \geq |\sigma|$).  $T$ exists by $\Delta^0_1$ comprehension and is downward closed.  $T$ is infinite because for any $n \in \Nb$, any satisfying truth assignment of~$C \restriction n$ restricted to~$\{x_i \mid i<n\}$ yields a string in $T$ of length $n$.  By $\wkl$
 let $P \subseteq \Nb$ be a path through $T$. We show that every finite $C_0 \subseteq C$ can be satisfied by the truth assignment $\nu : V \to \Bool$ defined for all $x_i \in V$ by $\nu(x_i) = \true \biimp i \in P$. Given $C_0 \subseteq C$ finite, let $n$ be such that $C_0 \subseteq C \restriction n$ and such that $\opvars{C_0} \subseteq \{x_i \mid i < n\}$.  Now let $\sigma \prec P$ be such that $|\sigma| = n$.  Then $(\forall \theta \in C_0)(\nu_\sigma(\theta) = \true)$ because $\nu_\sigma(\theta)$ is defined for all $\theta \in C_0$ and $\nu_\sigma(\theta) \neq \false$ for all $\theta \in C_0$.  Thus $\nu_\sigma$ satisfies $C_0$.

$\sat \imp \wkl$:  Let $V = \{x_i \mid i \in \Nb\}$ be a set of distinct variables, and to each string $\sigma \in 2^{<\Nb}$, associate the formula $\theta_\sigma \equiv \bigwedge_{i<|\sigma|}\ell_i$, where $\ell_i \equiv x_i$ if $\sigma(i) = 1$ and $\ell_i \equiv \neg x_i$ if $\sigma(i) = 0$.  Let $T \subseteq 2^{<\Nb}$ be an infinite tree, and, for each $n \in \Nb$, let $T^n = \{\sigma \in T \mid |\sigma| = n\}$.  Let $C = \{\bigvee_{\sigma \in T^n} \theta_\sigma \mid n \in \Nb\}$.  We show that every finite $C_0 \subseteq C$ is satisfiable.  Given $C_0 \subseteq C$ finite, let $n$ be maximum such that $\bigvee_{\sigma \in T^n}\theta_\sigma \in C_0$ and, as $T$ is infinite, let $\tau \in T$ have length $n$.  Then $\theta_\tau \imp \phi$ for every $\phi \in C_0$ because if $\phi =\bigvee_{\sigma \in T^m} \theta_\sigma \in C_0$, then $m \leq n$, $\theta_{\tau \restriction m}$ is a disjunct of~$\phi$, and $\theta_\tau \imp \theta_{\tau \restriction m}$.  Therefore $C_0$ is satisfiable by the truth assignment that satisfies $\theta_\tau$.  By $\sat$ there exists a valid assignment $\nu$ for $C$. Let $P$ be $\set{i \in \Nb : \nu(x_i) = \true}$. We show that $P$ is a path through $T$.
Given $n \in \Nb$, let $\sigma \prec P$ be such that $|\sigma| = n$. By definition of $P$, $(\forall i< n)(\sigma(i)=1 \biimp \nu(x_i) = \true)$, so $\nu(\theta_\sigma) = \true$, from which it follows that $\sigma \in T^n$.
\end{proof}

$\rwkl$, a weakening of~$\wkl$, has been recently introduced by Flood in \cite{flood2012reverse}.
Given an infinite binary tree, the principle does not assert the existence of a path, 
but rather of an infinite subset of a path through the tree.
Initially called $\rkl$, it has been renamed to~$\rwkl$ in \cite{bienvenurwkl} to give a consistent $R$ prefix to Ramseyan principles. 
This principle has been shown to be strictly weaker than $\srt^2_2$ and $\wkl$ by Flood, and strictly stronger than $\dnr$
by Bienvenu \& al. in \cite{bienvenurwkl}.
By analogy with $\rwkl$, we formulate Ramsey-type versions of satisfiability problems.

\begin{definition}
Let $C$ be a sequence of Boolean formulas over an infinite set of variables $V$.
A set $H$ is {\itshape homogeneous for $C$} if there is a truth value $c \in \Bool$ 
such that every conjunction of a finite set of formulas in $C$ 
is satisfiable by a truth assignment $\nu$ such that $(\forall a \in H)(\nu(a) = c)$.
\end{definition}

\begin{definition}
$\lrsat$ is the statement ``Let $C$  be a finitely satisfiable set of Boolean formulas over an infinite set of variables $V$.
For every infinite set $L \subseteq V$ there exists an infinite set $H \subseteq L$ homogeneous for $C$.''
The corresponding \emph{instance} of~$\lrsat$ is the tuple $(V, C, L)$. 
$\rsat$ is obtained by restricting $\lrsat$ to~$L = V$.
Then an \emph{instance} of~$\rsat$ is an ordered pair $(V, C)$.
\end{definition}

The equivalence between $\wkl$ and $\sat$ over $\rca$ extends to their Ramseyan version.
The proof is relatively easy and directly adaptable from proof of Theorem~\ref{thm:wkl-sat}.

\begin{theorem}[Bienvenu \& al. \cite{bienvenurwkl}]
$\rca \vdash \rwkl \biimp \rsat \biimp \lrsat$
\end{theorem}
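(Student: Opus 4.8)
The plan is to prove the cycle $\lrsat \imp \rsat \imp \rwkl \imp \lrsat$ over $\rca$. The first implication is immediate: an instance $(V,C)$ of $\rsat$ is literally the instance $(V,C,V)$ of $\lrsat$, and the two notions of solution coincide.

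For $\rsat \imp \rwkl$ I would reuse the construction from the proof of $\sat \imp \wkl$ in Theorem~\ref{thm:wkl-sat}: from an infinite binary tree $T$, form the variables $V = \set{x_i : i \in \Nb}$, the conjunctions $\theta_\sigma \equiv \bigwedge_{i < |\sigma|} \ell_i$, and the set $C = \set{\bigvee_{\sigma \in T^n} \theta_\sigma : n \in \Nb}$, which is finitely satisfiable by the same argument, so $(V,C)$ is a legitimate instance of $\rsat$. Apply $\rsat$ to obtain an infinite $H \subseteq V$ and a value $c \in \Bool$ witnessing homogeneity, and let $P = \set{i : x_i \in H}$ and $b \in \set{0,1}$ correspond to $c$. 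Given a finite $F \subseteq P$, choose $n > \max F$; as $T$ is infinite, $\bigvee_{\sigma \in T^n} \theta_\sigma$ is a genuine member of $C$, so homogeneity gives an assignment satisfying it whose value is $c$ on every $x_i$ with $i \in F$. It satisfies one of the disjuncts $\theta_\sigma$ with $\sigma \in T^n \subseteq T$, and then $\sigma(i) = b$ for all $i \in F$. Hence $P$ is an infinite set witnessing $\rwkl$ for $T$.

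For $\rwkl \imp \lrsat$ I would adapt the proof of $\wkl \imp \sat$, inserting two twists. Fix a finitely satisfiable set $C$ over $V = \set{x_i : i \in \Nb}$ and an infinite $L \subseteq V$, and let $i_0 < i_1 < \cdots$ enumerate $\set{i : x_i \in L}$. Build Simpson's tree $T = \set{\sigma : \neg(\exists \theta \in C \restriction |\sigma|)(\nu_\sigma(\theta) = \false)}$, which is infinite. The first twist forces the homogeneous set into $L$: rather than applying $\rwkl$ to $T$, apply it to the infinite $\Delta^0_1$-definable binary tree $T' = \set{\tau : (\exists \sigma \in T)(|\sigma| > i_{|\tau|-1} \andd (\forall j < |\tau|)(\sigma(i_j) = \tau(j)))}$ of projections of nodes of $T$ onto the coordinates $i_0, i_1, \dots$; this yields an infinite $G \subseteq \Nb$ and a bit $b$, and we set $H = \set{x_{i_j} : j \in G} \subseteq L$ with $c$ the Boolean of $b$. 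The second twist localizes to finitely many variables at a time: to see that $H$ is homogeneous for $C$ it suffices to produce, for each finite $C_0 \subseteq C$, an assignment satisfying $C_0$ whose value on the finite set $H \cap \opvars{C_0}$ is $c$, since the variables outside $\opvars{C_0}$ — among them the rest of $H$ — may then be set to $c$ freely without disturbing $C_0$. To build such an assignment, take $n$ bounding the codes and the variables occurring in $C_0$, enlarge the finite set $\set{j \in G : x_{i_j} \in \opvars{C_0}}$ by some $j^* \in G$ with $i_{j^*} \geq n$, feed this finite subset of $G$ to the homogeneity of $G$ for $T'$ to get a $\tau \in T'$, and unwind the definition of $T'$ to obtain a $\sigma \in T$ with $|\sigma| > n$ and $\sigma(i_j) = b$ on all the relevant coordinates; then $\nu_\sigma$, padded by $c$, is as desired. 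Taking $L = V$ here recovers $\rwkl \imp \rsat$, closing the cycle.

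The step I expect to be the real obstacle is this localization in $\rwkl \imp \lrsat$: bare $\rwkl$ is free to place its homogeneous set anywhere in $\Nb$, so one must reroute the compactness argument through the projected tree $T'$ to keep the homogeneous set inside the prescribed columns $L$, and then take a little care that the node of $T'$ returned is long enough that the corresponding node of $T$ decides all the formulas currently under consideration. Everything else is a direct transcription of the proof of Theorem~\ref{thm:wkl-sat}.
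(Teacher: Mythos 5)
Your proof is correct, and it is exactly what the paper intends: the paper omits the argument, saying only that it is ``directly adaptable from the proof of Theorem~\ref{thm:wkl-sat},'' and your adaptation (reusing Simpson's two constructions, with the projected tree $T'$ onto the coordinates of $L$ supplying the localization and the ``pad by $c$ outside $\opvars{C_0}$'' step supplying homogeneity) is the standard one from Bienvenu et al. The only nitpick is that ``$\nu_\sigma$ padded by $c$'' should be read as ``$\nu_\sigma$ restricted to $\opvars{C_0}$, then padded by $c$,'' since $\nu_\sigma$ may assign $\neg c$ to elements of $H$ that lie in its domain but outside $\opvars{C_0}$ — but you already set up the reduction so that only the values on $H \cap \opvars{C_0}$ matter.
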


\subsection{Definitions and notations}

Some classes of Boolean formulas -- bijunctive, affine, horn, ... --
have been extensively studied in complexity theory, leading to
the well-known dichotomy theorem due to Schaefer. We give
a precise definition of those classes in order to state our dichotomy theorems.

\begin{definition}
A \emph{literal} is either a Boolean variable (positive literal), or its negation (negative literal).
A \emph{clause} is a disjunction of literals.
A clause is \emph{horn} if it has at most one positive literal,
\emph{co-horn} if it has at most one negative literal and \emph{bijunctive} if it has at most 2 literals.
If we number Boolean variables, we can associate to each Boolean formula $\varphi$
with Boolean variables $x_1, \dots, x_n$ a relation $[\varphi] \subseteq \Bool^n$
such that $\vec a \in [\varphi]$ iff $\varphi(\vec a)$ holds. If $S$ is a set of relations,
an \emph{$S$-formula} over a set of variables $V$ is a formula of the form $R(y_1, \dots, y_n)$
for some $R \in S$ and $y_1, \dots, y_n \in V$.
\end{definition}

\begin{example}
Let $S = \set{\imp}$. $(x \imp y)$ is an $S$-formula but $(x \imp \neg y)$ is not. 
Neither is $(x \imp y) \wedge (y \imp z)$. The formula $(x \imp y)$ is equivalent to the horn
clause $(\neg x \vee y)$ where the literals are $\neg x$ and $y$.
\end{example}

\begin{definition}
A formula $\varphi$ is \emph{i-valid} for $i \in \Bool$ if $\varphi(i, \dots, i)$ holds.
It is \emph{horn} (resp. \emph{co-horn}, \emph{bijunctive}) if it is a conjunction of 
horn (resp. co-horn, bijunctive) clauses. A formula is \emph{affine}
if it is a conjunction of formulas of the form $x_1 \oplus \dots \oplus x_n = i$ for $i \in \Bool$
where $\oplus$ is the exclusive or.
\end{definition}

A relation $R  \subseteq \set{0, 1}^n$ is \emph{bijunctive} (resp. \emph{horn}, \emph{co-horn}, \emph{affine},
\emph{$i$-valid}) if there is bijunctive (resp. horn, co-horn, affine, $i$-valid) formula $\varphi$ such that $R = [\varphi]$.
A relation $R$ is \emph{i-default} for $i \in \Bool$ if for every $\vec{r} \in R$ and every $j < |\vec{r}|$,
the vector $\vec{s}$ defined by $\vec{s}(j) = i$ and $\vec{s}(k) = \vec{r}(k)$ otherwise, is also in $R$.
In particular every $i$-default relation is $i$-valid.
We denote by $\issat(S)$ the class of satisfiable conjunctions of~$S$-formulas.

\subsection{Dichotomies}

We first state the celebrated dichotomy theorem from Schaefer.
Interestingly, the corresponding dichotomies in reverse mathematics are not based on the same classes
of relations as the ones from Schaefer.

\begin{theorem}[Schaefer's dichotomy \cite{schaefer1978complexity}]
Let $S$ be a finite set of Boolean relations. If $S$
satisfies one of the conditions $(a)-(f)$ below, then
$\issat(S)$ is polynomial-time decidable. Otherwise,
$\issat(S)$ is log-complete in NP.
\smallskip

\begin{minipage}[t]{0.5\linewidth}  
\begin{itemize}
  \item[(a)] Every relation in S is $\false$-valid.
  \item[(b)] Every relation in S is $\true$-valid.
  \item[(c)] Every relation in S is horn
\end{itemize}
\end{minipage}
\begin{minipage}[t]{0.5\linewidth} 
\begin{itemize}
  \item[(d)] Every relation in S is co-horn
  \item[(e)] Every relation in S is affine.
  \item[(f)] Every relation in S is bijunctive.
\end{itemize}
\end{minipage}
\end{theorem}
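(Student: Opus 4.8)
The plan is to bypass Schaefer's original hands-on case analysis and route the argument through the Galois correspondence between co-clones and clones. Write $\coclone{S}$ for the relational clone generated by $S$, i.e. the closure of $S\cup\{{=}\}$ under conjunction and existential quantification (primitive positive definitions). The first step is to observe that the complexity of $\issat(S)$ depends only on $\coclone{S}$: if $R\in\coclone{S}$, then in any conjunction of $(S\cup\{R\})$-formulas each $R$-constraint can be replaced, using a fixed finite number of fresh variables and some extra $S$-constraints, by an equisatisfiable block of $S$-constraints, and since the pp-definition of $R$ is a fixed finite object this replacement is computable in logarithmic space. Consequently $\issat(\{R\})\leq_{\log}\issat(S)$ for every $R\in\coclone{S}$; so for the lower bounds it will suffice to locate an NP-hard relation inside $\coclone{S}$, and for the upper bounds the six conditions bear directly on $S$.

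By the Galois correspondence between clones and co-clones (due independently to Geiger and to Bodnarchuk et al.), $\coclone{S}=\inv{\poly{S}}$, so the lattice of Boolean co-clones is anti-isomorphic to Post's lattice of Boolean clones. Under this correspondence the six conditions become polymorphism conditions on $S$: $(a)$ holds iff the constant-$\false$ operation lies in $\poly{S}$; $(b)$ iff the constant-$\true$ operation does; $(c)$ iff the binary operation $\andd$ does (a relation is horn precisely when it is closed under componentwise conjunction); $(d)$ iff the binary operation $\orr$ does; $(e)$ iff the ternary operation $x\oplus y\oplus z$ does (a relation is affine precisely when it is closed under it); and $(f)$ iff the ternary majority operation does (a relation is bijunctive precisely when it is closed under majority). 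Each of these is a short lemma whose nontrivial direction amounts to checking that the relation obtained from $R$ by taking the conjunction of all horn clauses (respectively, all affine equations, all $2$-clauses, \dots) satisfied by $R$ equals $R$ itself exactly when $R$ is closed under the corresponding operation.

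It then remains to prove that if $\poly{S}$ contains none of these six operations, then $\poly{S}$ must be one of the two degenerate clones $I_2$ (projections only) or $N_2$ (generated by negation alone). This is the one point where Post's classification of the Boolean clones is genuinely used, and it is the main obstacle; everything else is bookkeeping. In the first case $\coclone{S}$ is the set of all Boolean relations, so it pp-defines the clause relations of $3$-\textsc{sat} and $\issat(S)$ inherits the NP-hardness of $3$-\textsc{sat}. In the second case $\coclone{S}$ is the set of complementive relations, which contains the not-all-equal relation $\{\false,\true\}^3\setminus\{(\false,\false,\false),(\true,\true,\true)\}$, so $\issat(S)$ inherits the NP-hardness of \textsc{nae}-$3$-\textsc{sat}. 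Since $\issat(S)$ is visibly in NP (a satisfying assignment is a polynomial-size witness) and all reductions above are log-space, $\issat(S)$ is log-complete in NP.

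Finally, each of the six surviving cases admits the classical polynomial decision procedure for $\issat(S)$: under $(a)$ (resp. $(b)$) the all-$\false$ (resp. all-$\true$) assignment satisfies every conjunction, so membership is trivial; under $(c)$ one rewrites the conjunction as a horn formula in conjunctive normal form and decides it by unit propagation; $(d)$ reduces to $(c)$ by complementing every variable; under $(e)$ one rewrites the conjunction as a linear system over $\mathbb{F}_2$ and runs Gaussian elimination; and under $(f)$ one rewrites it as a $2$-CNF and decides satisfiability via the strongly connected components of the implication graph. Assembling the two sides yields the stated dichotomy.
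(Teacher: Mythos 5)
The paper does not prove this theorem at all: it is imported verbatim from Schaefer's 1978 paper, and the only pieces of Schaefer's argument that the paper actually reproduces are the expressibility lemmas it needs later (Lemma~\ref{lem:schaefer-4.3}, Lemma~\ref{lem:schaefer-3.2.1}, Theorem~\ref{thm:schaefer-closure}). So there is no in-paper proof to match, and the right comparison is with Schaefer's original argument, which proceeds by a direct, hands-on expressibility case analysis. Your proposal instead takes the modern algebraic route through the $\poly{\cdot}$--$\inv{\cdot}$ Galois connection and Post's lattice, and it is correct as a sketch: the reduction $\issat(\{R\}) \le_{\log} \issat(S)$ for $R \in \coclone{S}$, the translation of conditions (a)--(f) into polymorphism conditions (which for (c)--(f) is exactly the paper's Theorem~\ref{thm:schaefer-closure}), the observation that a clone avoiding all six operations must be $I_2$ or $N_2$ (this follows from the fact that the six operations together with negation generate precisely the atoms of Post's lattice, so you correctly isolate this as the one place the classification is indispensable), and the hardness of $3$-SAT and NAE-$3$-SAT in the two residual cases. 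What this route buys is exactly the machinery the paper itself builds in Section~3 for the Ramseyan dichotomy (Theorems~\ref{thm:schaefer-bij-dichotomy}, \ref{thm:schaefer-affine-dichotomy}, \ref{thm:dichotomy-remaining} are all lattice walks of the same kind), so your proof makes the finite-complexity theorem and the reverse-mathematical one visibly instances of one argument; what Schaefer's original route buys is independence from Post's classification. Two small points you should not leave implicit: the pp-definition of $R$ over $S$ may use the equality relation, which need not be a conjunction of $S$-formulas, so the log-space replacement must also identify variables linked by equality constraints (harmless for satisfiability, but it is a step); and the polymorphism characterization of $\false$-validity via the constant function silently assumes the relations are nonempty.
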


In the remainder of this paper, $S$ will be a -- possibly infinite -- class of Boolean relations.
Note that there is no effectiveness requirement on $S$.

\begin{definition}
$\sats{S}$ is the statement
``For every finitely satisfiable set $C$ of~$S$-formulas over an infinite set of variables $V$,
there exists an infinite assignment 
$\nu: V \to \Bool$ satisfying $C$''.
\end{definition}

We will prove the following theorem
based on Schaefer's theorem.

\begin{theorem}\label{thm:inf-schaefer-dichotomy}
If $S$ satisfies one of the conditions $(a)-(d)$ below,
then $\sats{S}$ is provable over $\rca$. Otherwise $\sats{S}$
is equivalent to~$\wkl$ over $\rca$.
\begin{itemize}
  \item[(a)] Every relation in S is $\false$-valid.
  \item[(b)] Every relation in S is $\true$-valid.
  \item[(c)] If $R \in S$ is not $\false$-default then $R = [x]$.
  \item[(d)] If $R \in S$ is not $\true$-default then $R = [\neg x]$.
\end{itemize}
\end{theorem}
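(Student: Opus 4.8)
The plan is to handle separately the four ``provable'' cases and the two implications of the equivalence. The implication $\wkl \imp \sats{S}$ needs no argument: an instance of $\sats{S}$ is a fortiori an instance of $\sat$, so Theorem~\ref{thm:wkl-sat} applies, uniformly in $S$. For cases $(a)$ and $(b)$ a constant assignment works: if every $R \in S$ is $\false$-valid, then $\nu \equiv \false$ (which exists by $\Delta^0_1$ comprehension) satisfies every $S$-formula $R(y_1,\dots,y_n)$, since $R(\nu(y_1),\dots,\nu(y_n)) = R(\false,\dots,\false)$ holds — and finite satisfiability is not even used; case $(b)$ is symmetric with $\nu \equiv \true$.

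For cases $(c)$ and $(d)$ I would instead define $\nu$ so that it makes true exactly the ``directly forced'' variables. Assuming $(c)$, set $\nu(v) = \true$ iff the $S$-formula $[x](v)$ literally belongs to $C$ — a $\Delta^0_1(C)$ condition, so $\nu$ exists. To check that $\nu$ satisfies an arbitrary $\theta = R(y_1,\dots,y_n) \in C$, apply finite satisfiability to the finite set consisting of $\theta$ together with every formula $[x](y_i)$ that lies in $C$, obtaining some $\mu$ satisfying it; then $\nu(y_i) = \true$ forces $\mu(y_i) = \true$, so the tuple $\nu(\vec y)$ is coordinatewise below $\mu(\vec y)$. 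If $R = [x]$ then $\theta = [x](y_1) \in C$ already gives $\nu(y_1) = \true$; otherwise $(c)$ makes $R$ $\false$-default, hence closed under lowering any set of coordinates to $\false$, so $R(\mu(\vec y))$ implies $R(\nu(\vec y))$. Case $(d)$ is dual, with $\nu(v) = \false$ iff $[\neg x](v) \in C$ and with $\true$-default relations closed under raising coordinates.

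For the remaining direction I would show that if $S$ avoids $(a)$--$(d)$ then $\sats{S}$ proves $\Sigma^0_1$-separation, which is equivalent to $\wkl$ over $\rca$ (see \cite{simpson2009subsystems}). Given disjoint $\Sigma^0_1$ sets $A = \mathrm{range}(f)$ and $B = \mathrm{range}(g)$, I would build a finitely satisfiable set $C$ of $S$-formulas, $\Delta^0_1$ in $f \oplus g$, over variables including $\set{x_k : k \in \Nb}$, such that every solution $\nu$ yields a set separating $A$ from $B$ — namely $\set{k : \nu(x_k) = \true}$, or $\set{k : \nu(x_k) = \nu(m)}$ for a distinguished variable $m$ in the disequality case below. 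The trick is to relay the \emph{enumerations}, not the sets: for each $k,s$ introduce fresh auxiliary variables $a_{k,s}, b_{k,s}$, put in $C$ a constraint pinning $a_{k,s}$ to $\true$ \emph{exactly when $f(s) = k$} — a decidable condition, so membership in $C$ stays $\Delta^0_1$ — plus a constraint forcing $a_{k,s}$ and $x_k$ to agree, and symmetrically a constraint pinning $b_{k,s}$ to $\false$ when $g(s) = k$ plus one forcing $b_{k,s}$ and $x_k$ to agree. If $k \in A$ some stage fires, so $\nu(x_k) = \true$; if $k \in B$ then $\nu(x_k) = \false$; and $A \cap B = \emptyset$ makes the instance finitely satisfiable. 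For this to be a legitimate $\sats{S}$ instance the relations used — pinning to $\true$, pinning to $\false$, forcing two variables to agree — must be expressible, so the real content is a gadget lemma: \emph{if $S$ avoids $(a)$--$(d)$ then either the disequality relation $\set{(\false,\true),(\true,\false)}$ is primitive-positively definable over $S$ (using conjunctions of $S$-formulas and identification of variables, but no equality atoms), or all of $[x]$, $[\neg x]$ and one of the relations $\set{(\false,\false),(\true,\true)}$ (equality) and $\set{(\false,\false),(\false,\true),(\true,\true)}$ (implication) are.} In the first case agreement is obtained through a common disequality neighbour and the pinnings are free; in the second the constants give the pinnings and agreement is the equality relation, or a double use of implication. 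Translating the instance built over $\set{[x],[\neg x],\dots}$ back into $S$-formulas only substitutes a fixed finite primitive-positive definition per constraint, so $C$ stays $\Delta^0_1$ in $f \oplus g$.

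Proving the gadget lemma is the main obstacle. I would start from witnesses $R_a, R_b, R_c, R_d \in S$ to the failures of $(a)$--$(d)$. Identifying all arguments of $R_a$ gives a unary relation missing $\false$, hence $\set{\true} = [x]$ (a constant) or the empty relation; in the latter subcase every tuple of $R_a$ has a $\false$- and a $\true$-coordinate, and collapsing the two blocks to two variables yields a binary relation inside $\set{(\true,\false),(\false,\true)}$ containing $(\true,\false)$, hence a pinning relation ($[x]$ again) or disequality; dually $R_b$ gives $[\neg x]$ or disequality. If disequality appears we are done; otherwise $[x]$ and $[\neg x]$ are definable, so we may substitute Boolean constants into the relations of $S$. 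I would then analyse $R_c$: it is not $\false$-default, so some $\vec r \in R_c$ and coordinate $j$ satisfy $\vec r(j) = \true$ while the tuple obtained from $\vec r$ by lowering coordinate $j$ leaves $R_c$, and $R_c \neq [x]$ forces arity $\geq 2$. Fixing every coordinate of $R_c$ but $j$ and one further coordinate to their values in $\vec r$ yields a binary relation, and the crux is to check that its only possibilities are a pinning relation, ``$[x]$ on a coordinate'', equality, implication, disequality, or $\set{(\false,\true),(\true,\false),(\true,\true)}$ (``or''), and that the two trivial possibilities cannot occur for \emph{all} choices of the free coordinate and of $\vec r$, since then $R_c$ would itself be $\false$-default or equal to $[x]$. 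A dual analysis of $R_d$ produces equality, implication, disequality, or $\set{(\false,\false),(\false,\true),(\true,\false)}$ (``nand''); and if $R_c$ yields only ``or'' while $R_d$ yields only ``nand'', conjoining them on a common pair of variables gives disequality. Carrying out this finite case distinction — a scaled-down version of the polymorphism bookkeeping behind Schaefer's theorem, adapted to the coarser invariants $\false$-valid, $\true$-valid, $\false$-default and $\true$-default — is where the effort goes.
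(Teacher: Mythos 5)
Your treatment of cases (a)--(d) and of the implication $\wkl \imp \sats{S}$ is correct and matches the paper (your argument for (c) and (d) is exactly Lemma~\ref{lem:i-valid-trivial}). The problem is the reversal, and specifically the ``gadget lemma'' on which your whole construction rests: as stated it is false. Take $S = \set{[x \wedge \neg y]}$, i.e.\ the single binary relation $\set{(\true,\false)}$ --- a set the paper explicitly singles out as satisfying $\rca \vdash \sats{[x\wedge\neg y]} \biimp \wkl$, so it avoids (a)--(d) and must be covered. Every satisfiable conjunction of atoms $R(u,v)$ with $R = [x\wedge\neg y]$ pins each mentioned variable to a constant, so the relations primitive-positively definable from $S$ without equality atoms are exactly the ``rectangles'' (products of $\set{\false}$, $\set{\true}$, $\Bool$); none of disequality, equality, implication, ``or'' or ``nand'' is a rectangle. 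Concretely, in your analysis of $R_c$ the binary relation you extract is $\set{(\true,\false)}$ itself --- one of your ``trivial possibilities'' --- for the only available choice of coordinates, yet $R_c$ is neither $\false$-default nor $[x]$; this refutes the step where you claim the trivial possibilities cannot occur for all choices. Since your $\Sigma^0_1$-separation instance needs an agreement gadget to transfer the pin on the stage-indexed variable $a_{k,s}$ to $x_k$, and no such gadget is definable here, the construction does not go through.

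The fix is the mechanism the paper actually uses, and it dispenses with agreement altogether: when (a)--(d) fail one can find $R_1, R_2 \in S$ of arity at least $2$ together with formulas $\psi_1,\psi_2$ over fresh variables such that $[(\exists \vec z) R_1(x,\vec z)\wedge \psi_1(\vec z)] = [x]$ (up to pinning some fresh variables) and dually for $[\neg x]$; for $[x\wedge\neg y]$ one simply takes $(\exists z)R(x,z)$ and $(\exists z)R(z,y)$. Because the pinning formula itself mentions fresh variables, it can be added to $C$ at the decidable stage at which the relevant event fires, applied directly to $x_k$ (or to $x_\sigma$ in the paper's coding of a path through a tree); no truth value ever has to be relayed between two previously introduced variables, so no equality or disequality gadget is needed. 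Your observation that the condition ``$f(s)=k$'' is decidable is exactly the right one, but it must be combined with arity-$\geq 2$ pins rather than with unary pins plus an agreement relation. Extracting $R_1$ and $R_2$ from the failure of (a)--(d) --- via Schaefer's Lemma 4.3 and a short minimal-witness argument --- is the real content of the paper's proof of the reversal, and it is precisely the part your case analysis does not deliver.
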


$\sats{S}$ principles are not fully satisfactory as these are not robust notions:
if we define $\sats{S}$ in terms of satisfiable sets of \emph{conjunctions} of~$S$-formulas,
this yields a different dichotomy theorems. 
In particular, $\rca \vdash \sats{[x], [\neg y]}$ whereas
$\rca \vdash \sats{[x \wedge \neg y]} \biimp \wkl$. Ramseyan versions of
satisfaction problems have better properties.

\begin{definition}
$\rsats{S}$ is the statement
``For every finitely satisfiable set $C$ of~$S$-formulas over an infinite set of variables $V$,
there exists an infinite set $H \subseteq V$ homogeneous for~$C$''.
\end{definition}

Usual reductions between satisfiability problems involve
fresh variable introductions. This is why it is natural
to define a \emph{localized} version of those principles,
i.e. where the homogeneous set has to lie within a pre-specified set.

\begin{definition}
$\lrsats{S}$  is the statement  
``For every finitely satisfiable set $C$ of~$S$-formulas over an infinite set of variables $V$ 
and every infinite set $X \subseteq V$,
there exists an infinite set $H \subseteq X$ homogeneous for~$C$''.
\end{definition}

In particular, we define $\lrsatzerovalid$ (resp. $\lrsatonevalid$, $\lrsathorn$,
$\lrsatcohorn$, $\lrsatbijunctive$ or $\lrsataffine$) to denote $\lrsats{S}$
where $S$ is the set of all $\false$-valid (resp. $\true$-valid, horn, co-horn, bijunctive or affine) relations.
We will prove the following dichotomy theorem.

\begin{theorem}\label{thm:dichotomy-ramsey-schaefer}
Either $\rca \vdash \lrsats{S}$ or $\lrsats{S}$ is equivalent to one of the following principles over $\rca$:

\begin{minipage}[t]{0.4\linewidth} 
\begin{itemize} 
  \item[1.] $\lrsat$
  \item[2.] $\lrsats{[x \neq y]}$
\end{itemize}
\end{minipage}
\begin{minipage}[t]{0.4\linewidth}  
\begin{itemize}
  \item[3.] $\lrsataffine$
  \item[4.] $\lrsatbijunctive$
\end{itemize}
\end{minipage}
\end{theorem}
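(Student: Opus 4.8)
The strategy I would follow is to reduce the whole classification to Post's lattice of Boolean co-clones and then walk that (countable, explicitly described) lattice.

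\emph{Step 1: co-clone invariance.} The main structural ingredient is the lemma that, over $\rca$, the strength of $\lrsats{S}$ depends only on the co-clone $\coclone{S}$ generated by $S$ under primitive positive definitions, and is monotone: if $\coclone{S_1}\subseteq\coclone{S_2}$ then $\rca\vdash\lrsats{S_2}\to\lrsats{S_1}$. To prove it, assume $\lrsats{S_2}$ and take an instance $(V,C,X)$ of $\lrsats{S_1}$; replace each $S_1$-formula occurring in $C$ by its primitive positive definition over a fixed finite set of generators of $\coclone{S_2}$, instantiating the existentially quantified variables by \emph{fresh} variables outside $V$. This yields an instance $(V',C',X)$ of $\lrsats{S_2}$ in which, crucially, $X$ is unchanged. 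Finite satisfiability of $C'$ follows from that of $C$ by choosing primitive positive witnesses, and any infinite $H\subseteq X$ homogeneous for $C'$ is homogeneous for $C$ with the same value $c$, since a truth assignment satisfying a finite subconjunction of $C'$ with $H\to c$ restricts to one satisfying the corresponding subconjunction of $C$ with $H\to c$. The entire point of the \emph{localized} formulation is that the fresh variables live outside $X$ and so cannot interfere with the constraint $H\subseteq X$; this is exactly the robustness the introduction alludes to, and it is what fails for $\sats{S}$ (e.g. $\coclone{[x]\!,[\neg y]}=\coclone{[x\wedge\neg y]}$ while these have different $\sats{}$-strength). Uniformity is harmless at base theory $\rca$ because every Boolean co-clone is finitely generated, so one may assume $S$ is finite, after which membership in $\coclone{S}$ (hence a canonical primitive positive definition) is computable.

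\emph{Step 2: the lattice walk; upper bounds and the provable region.} By Step 1 it suffices to determine the status of $\lrsats{S}$ for each Boolean co-clone, and there are only countably many, arranged in a lattice with explicitly known Hasse diagram. The four principles in the statement correspond to four specific co-clones: the co-clone of all Boolean relations ($\lrsat$, which is $\rwkl$ by the theorem of Bienvenu et al.), $\coclone{[x\neq y]}$ (the relations invariant under all self-dual operations), the affine co-clone ($\lrsataffine$), and the bijunctive co-clone ($\lrsatbijunctive$). The upper bounds are then immediate from monotonicity: a co-clone below one of these four yields a principle reducing to the corresponding one, and everything reduces at worst to $\lrsat$ since the top co-clone is among the four. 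Near the bottom of the lattice $\lrsats{S}$ is provable in $\rca$: if every relation in $S$ is $\false$-valid (resp. $\true$-valid), take $H=X$ with the constant assignment; more generally, whenever Theorem~\ref{thm:inf-schaefer-dichotomy} already gives $\rca\vdash\sats{S}$ one concludes $\rca\vdash\lrsats{S}$, since a full satisfying assignment of $C$ restricted to $X$ is constant on an infinite (indeed decidable) subset of $X$.

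\emph{Step 3: the matching lower bounds --- the crux.} It remains to show that for each non-provable co-clone $\lrsats{S}$ is \emph{at least} as strong as the target principle attached to it. When $\coclone{S}$ equals one of the four target co-clones this is again monotonicity. The real work lies in the intervals whose bottom co-clone is a proper subclone of a target co-clone: there one must code instances of the target principle into $S$-instances by gadget reductions in the spirit of Schaefer's original proof, now exploiting the extra freedom of the localized Ramsey formulation --- the free choice of the homogeneous value $c$, and the freedom to place the prescribed set $X$ so as to select a "side" of the gadget construction. I expect the single hardest point to be the Schaefer-type cases \emph{not} covered by Theorem~\ref{thm:inf-schaefer-dichotomy}, namely the Horn and dual-Horn co-clones and the complementive co-clone, which require a direct computability-theoretic analysis, together with the verification that none of them produces a fifth class. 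For instance, for Horn the value $c=\false$ works unless an infinite subset of $X$ is forced true by finite subsets of $C$, in which case $c=\true$ works --- but that set of forced variables is only $\Sigma^0_1(C)$, and extracting the required infinite subset of its complement is a Ramsey-type extraction genuinely unavailable in $\rca$ (witnessed by a maximal-set encoding), so $\lrsathorn$ is not provable; the remaining task is to show it collapses onto one of the four targets (my guess: onto $\lrsat$, via an encoding of arbitrary binary trees mimicking the $\wkl\imp\sat$ direction of Theorem~\ref{thm:wkl-sat}), dually for $\lrsatcohorn$, while the complementive sub-co-clones that are neither $\false$- nor $\true$-valid fall onto $\lrsats{[x\neq y]}$. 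Once every co-clone has been located in this way, the dichotomy follows.
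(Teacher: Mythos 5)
Your overall architecture (reduce to co-clone membership, then walk Post's lattice, with monotonicity giving the upper bounds) is exactly the paper's strategy, and Steps 1 and 2 correspond to Lemma~\ref{lem:rep-lrsats}, Lemma~\ref{lem:fix-color-lrsats} and Lemma~\ref{lem:sats-lrsats}. (One caveat on Step 1: the reduction of $\lrsats{S}$ to $\lrsats{T}$ needs $[x\neq y]\in Rep_{NC}(T)$, because copying a variable into fresh existential witnesses requires the equality relation $[x=y]=[(\exists z)(x\neq z\wedge z\neq y)]$; the paper is careful to only invoke co-clone monotonicity once that hypothesis is secured, and to handle constants separately via complementive relations in Lemma~\ref{lem:fix-color-lrsats}.)

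The genuine error is in Step 3, precisely at the point you identify as the crux. You assert that $\lrsathorn$ is \emph{not} provable in $\rca$ and guess that it collapses onto $\lrsat$. The paper proves the opposite (Theorem~\ref{thm:rca-proves-lrsathorn}): $\lrsathorn$ and $\lrsatcohorn$ \emph{are} provable in $\rca$, and this is the paper's main surprise. Your objection --- that the set $F$ of variables forced to $\true$ is only $\Sigma^0_1$, so extracting an infinite subset of its complement is a maximal-set-style obstruction --- misreads the case split. One never needs an infinite subset of the complement of an infinite $\Sigma^0_1$ set. Either $F$ is infinite, in which case $F$ itself contains an infinite $\Delta^0_1$ subset (enumerate $F$ and take elements in order of appearance), and that subset is homogeneous with colour $\true$; or $F$ is finite, in which case $H=L\setminus F$ is cofinite in $L$, hence exists by $\Delta^0_1$ comprehension, and is homogeneous with colour $\false$. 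The second case is where the Horn structure is actually used: if some finite $C_{fin}$ refuted homogeneity, then for each $x\in H_{fin}=\opvars{C_{fin}}\cap H$ there is a satisfying assignment $\nu_x$ of $C_{fin}$ with $\nu_x(x)=\false$ (since $x\notin F$), and because the satisfying assignments of a finite Horn formula are closed under the conjunction polymorphism (Theorem~\ref{thm:schaefer-closure}), $\bigwedge_{x\in H_{fin}}\nu_x$ satisfies $C_{fin}$ and is $\false$ on all of $H_{fin}$ --- contradiction. Your proposed coding of arbitrary binary trees into Horn instances cannot work, since the disjunctions $\bigvee_{\sigma\in T^n}\theta_\sigma$ from Theorem~\ref{thm:wkl-sat} are not Horn. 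A second, smaller misassignment: the complementive co-clone ($\poly{S}\subseteq N_2$) lands on full $\lrsat$ (Theorem~\ref{thm:dichotomy-remaining}, by adjoining $[x]$ to reach $I_2$), not on $\lrsats{[x\neq y]}$; the latter class is the interval $D_1\subseteq\poly{S}\subseteq D$. With the Horn/co-Horn cases wrongly placed, your classification would produce a different (and incorrect) list of provable cases, so the gap is not cosmetic.
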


As we will see in Theorem~\ref{thm:localized-to-unlocalized}, each of those 
principles are equivalent to their non localized version.
As well, $\lrsats{[x \neq y]}$ coincides with an already existing principle
about bipartite graphs~\cite{bienvenurwkl} called $\rcolor_2$ and $\lrsat$ is equivalent to~$\rwkl$
over $\rca$. Hence $\lrsats{S}$ is either provable over $\rca$, or equivalent 
to one of~$\rcolor_2$, $\rsataffine$, $\rsatbijunctive$
and $\rwkl$ over $\rca$.

\section{Schaefer's dichotomy theorem}

\begin{definition}
Let $S$ be a set of Boolean relations and $V$ be a set of variables. 
Let $\varphi$ be an $S$-formula over $V$. We denote by $\opvars{\varphi}$ the set of variables occurring in $\varphi$.
An \emph{assignment} for $\varphi$ is a function $\nu: \opvars{\varphi} \to \set{\true, \false}$.
An assignment can be naturally extended to a function over formulas by the natural interpretation rules
for logical connectives. Then an assignment $\nu$ \emph{satisfies} $\varphi$ if $\nu(\varphi) = \true$.
The set of assignments satisfying $\varphi$ is written $\opass{\varphi}$.
\emph{Variable substitution} is defined in the usual way and is written $\varphi[y/x]$, meaning that all occurrences
of $x$ in $\varphi$ are replaced by $y$. We will also write $\varphi[y/X]$ where $X$ is a set of variables
to denote substitution of all occurrences of a variable of $X$ in $\varphi$ by $y$.
A \emph{constant} is either $\false$ or $\true$.
\end{definition}

\begin{definition}
Let $S$ be a set of Boolean relations. 
The class of existentially quantified $S$-formulas
-- i.e. of the form $(\exists \vec{x})R[\vec{x}, \vec{y}]$ with $R \in S$ -- is denoted
by $Gen^{*}_{NC}(S)$. We also define $Rep^{*}_{NC}(S) = \set{[R] : R \in Gen^{*}_{NC}(S)}$, ie. the
relations represented by existentially quantified $S$-formula.
By abuse of notation, we may use $Rep^{*}_{NC}(R)$ when $R$ is a relation to denote $Rep^{*}_{NC}(\set{R})$.
\end{definition}

Given some set of Boolean realtions~$S$, the set~$Rep^{*}_{NC}(S)$
might not exist over~$\rca$. However, we shall not use it as a set, but within
relations of the form~$[R] \in Rep^{*}_{NC}(S)$, which can be seen as an abbreviation
for an arithmetical statement using only~$R$ and~$S$ as parameters.
Also note that the definition of $Gen^{*}_{NC}(S)$ and $Rep^{*}_{NC}(S)$ differ from
Schaefer's definition of $Gen_{NC}(S)$ and $Rep_{NC}(S)$ in that the latter are closed under conjunction.
Therefore, reusing Schaefer's lemmas must be done with some precautions,
checking that his proofs do not use conjunction. This is the case of the following lemma:

\begin{lemma}[{{Schaefer in \cite[4.3]{schaefer1978complexity}}}]\label{lem:schaefer-4.3}
$\rca$ proves that at least one of the following holds:
\begin{itemize}
  \item[(a)] Every relation in $S$ is $\false$-valid.
  \item[(b)] Every relation in $S$ is $\true$-valid.
  \item[(c)] $[x]$ and $[\neg x]$ are contained in $Rep^{*}_{NC}(S)$.
  \item[(d)] $[x \neq y] \in Rep^{*}_{NC}(S)$.
\end{itemize}
\end{lemma}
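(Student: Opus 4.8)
The idea is to translate Schaefer's original case analysis (his Lemma~4.3) into $\rca$, which amounts to a careful but routine finite combinatorial argument plus a bookkeeping check that no step appeals to the conjunction-closure of $Rep_{NC}(S)$. First I would assume that neither $(a)$ nor $(b)$ holds, and fix witnesses: a relation $R_0 \in S$ that is not $\false$-valid (so the all-$\false$ tuple is excluded from $R_0$) and a relation $R_1 \in S$ that is not $\true$-valid. These are two concrete parameters, and everything downstream is arithmetic in $R_0$, $R_1$, and $S$, so the existence issues flagged in the paragraph before the lemma do not arise.

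Next I would recall the classical gadget: from a not-$\false$-valid relation one can, by identifying all its argument positions into a single variable $x$ and existentially quantifying over the rest — or by plugging the constant tuples when allowed — obtain an existentially quantified $S$-formula whose solution set is a nonempty subset of $\{\true\}$ in the free variable $x$, i.e. is $[x]$; symmetrically from a not-$\true$-valid relation one obtains $[\neg x]$. The subtlety Schaefer handles, and which I would reproduce, is that identifying variables in $R_0$ need not immediately give exactly $[x]$: the diagonal $\{(a,\dots,a) : (a,\dots,a)\in R_0\}$ is either $\{\true\}$, or $\{\false,\true\}$, or — if $R_0$ has no constant tuple at all — empty, in which case one must instead fix some satisfying tuple of $R_0$ and substitute the constants $\false,\true$ into all but one coordinate, using that tuple's pattern to force the free coordinate. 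Running this analysis for both $R_0$ and $R_1$ produces the four sub-cases: if we get $[x]$ from $R_0$ and $[\neg x]$ from $R_1$ we are in case $(c)$; the remaining combinations (where a diagonal is the full $\{\false,\true\}$, which is uninformative, or where the two gadgets instead yield a "not-equal"-type constraint) are massaged, exactly as in Schaefer, into $[x\neq y] \in Rep^{*}_{NC}(S)$, giving case $(d)$. Each such massaging is a single existential quantification over a fixed $S$-formula, never a conjunction, which is the point of working with $Gen^{*}_{NC}$ rather than $Gen_{NC}$.

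Throughout, the only logical content beyond finite case-checking is: (i) the $\Delta^0_1$ definability of $[R]$ for the finitely many $R$ involved, so that the relations $[x]$, $[\neg x]$, $[x\neq y]$ and their membership in $Rep^{*}_{NC}(S)$ are legitimate arithmetic assertions in $\rca$; and (ii) $\Sigma^0_1$ (indeed bounded) induction to run the finite search over coordinates of $R_0$ and $R_1$ — all unproblematic over $\rca$. I do not expect a genuine obstacle; the real work is the verification that Schaefer's proof of his 4.3, when restricted to the non-conjunctive fragment, still goes through and still lands in one of the four listed alternatives. The one place to be attentive is precisely the "empty diagonal" sub-case above, where the gadget uses explicit constants: one must check that $\false$ and $\true$ are available as constants in the formulas we are allowed to write (they are, by the definition of $S$-formula together with constant substitution), so that pinning down $[x]$ or $[\neg x]$ does not secretly require a conjunction.
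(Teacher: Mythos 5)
The paper itself offers no proof of this lemma: it cites Schaefer's Lemma 4.3 and merely asserts, in the preceding paragraph, that his argument never uses conjunction, so your reconstruction has to stand on its own. Its skeleton is the right one: fix $R_0 \in S$ not $\false$-valid and $R_1 \in S$ not $\true$-valid, examine the diagonals $[R_0(x,\dots,x)]$ and $[R_1(x,\dots,x)]$, and split into cases; when both diagonals are nonempty you indeed land in case (c) by pure variable identification, which is a legitimate $S$-formula since the arguments $y_1,\dots,y_n$ need not be distinct.

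The handling of the remaining sub-case, however, is broken. A small slip first: since $R_0$ is not $\false$-valid its diagonal can never be $\set{\false,\true}$; it is either $\set{\true}$ or empty (dually for $R_1$), so the ``uninformative full diagonal'' branch you describe does not occur. The serious problem is the empty-diagonal branch, where you propose to ``substitute the constants $\false,\true$ into all but one coordinate'' and explicitly assert that constants are available. They are not: an $S$-formula is by definition $R(y_1,\dots,y_n)$ with the $y_i$ drawn from the variable set, and $Gen^{*}_{NC}/Rep^{*}_{NC}$ is precisely the \emph{no-constants} class -- contrast the later definition of $Gen(S)$, which does admit $\true,\false$ as arguments. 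Forbidding constants is the entire content of Schaefer's 4.3, so this step cannot be repaired by appeal to the definitions. The correct gadget is different: if the diagonal of, say, $R_0$ is empty, then $R_0$ is neither $\false$- nor $\true$-valid, so pick a tuple $\vec{a} \in R_0$ and form the two-variable $S$-formula $\varphi(x,y)$ by putting $x$ in every coordinate $i$ with $\vec{a}(i)=\true$ and $y$ in every coordinate with $\vec{a}(i)=\false$. Then $(\true,\false) \in [\varphi]$ while $(\false,\false)$ and $(\true,\true)$ are excluded, so $[\varphi]$ is either $[x \neq y]$, giving case (d), or $[x \wedge \neg y]$, in which case $[\exists y\,\varphi]=[x]$ and $[\exists x\,\varphi]=[\neg x]$ give case (c) -- each a single existentially quantified $S$-formula, with no conjunction and no constants. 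Your account of how case (d) arises (``not-equal-type constraints'' emerging from the two separate gadgets) does not match this and would not actually exhibit $[x \neq y]$ as a member of $Rep^{*}_{NC}(S)$.
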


One easily sees that if every relation in $S$ is $\false$-valid (resp. $\true$-valid)
then $\rca \vdash \sats{S}$ as the assignment always equal to~$\false$ (resp. $\true$)
is a valid assignment and is computable. We will now see that
problems parameterized by relations either $\false$-default or $[x]$
(resp. $\true$-default or $[\neg x]$) are also solvable over~$\rca$. 

The proof of the following lemma justifies the name $\false$-default (resp. $\true$-default) 
by using a strategy for solving an instance $(V, C)$ of $\sats{S}$ consists in 
defining an assignment which given a variable $x$ will give it the default value $\false$ (resp. $\true$) 
unless it finds the clause $(x) \in C$, where $(x)$ is the clause with~$x$ as the unique literal.

\begin{lemma}\label{lem:i-valid-trivial}
$\rca$ proves that if the only relation in $S$ which is not $\false$-default is $[x]$ or the only relation which is not $\true$-default 
is $[\neg x]$ then $\sats{S}$ holds.
\end{lemma}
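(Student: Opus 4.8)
The plan is to build the satisfying assignment by hand rather than via $\wkl$ as in the proof of Theorem~\ref{thm:wkl-sat}: the hypothesis is exactly what makes a \emph{default-value} strategy work. I will treat the first alternative, that the only relation of $S$ which is not $\false$-default is $[x]$, and obtain the second (with $\true$-default and $[\neg x]$) by the symmetric argument exchanging $\false$ and $\true$ everywhere. So assume every $R \in S$ is either $\false$-default or equal to the relation $[x] = \set{(\true)}$, and fix an instance $(V, C)$ of $\sats{S}$, i.e.\ a finitely satisfiable set $C$ of $S$-formulas over the infinite set $V$.

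First I would define $\nu : V \to \Bool$ by putting $\nu(v) = \true$ exactly when the singleton clause $(v)$ with $v$ as its unique literal --- which is an $S$-formula built from the relation $[x]$ --- belongs to $C$, and $\nu(v) = \false$ otherwise. Since $C$ is a set and $v \mapsto (v)$ is computable on codes, this is a $\Delta^0_1(C)$ condition, so $\nu$ exists by $\Delta^0_1$ comprehension and is a total assignment on $V$ (if $[x] \notin S$ it is simply the constant $\false$ assignment). It then remains to check that $\nu$ satisfies every $\theta \in C$; since the whole construction takes place inside $\rca$, this yields $\rca \vdash \sats{S}$.

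Next, fix $\theta \in C$ and write $\theta = R(y_1, \dots, y_n)$ with $R \in S$ and $y_1, \dots, y_n \in V$ (possibly with repetitions). If $R = [x]$ then $n = 1$ and $\theta$ is the clause $(y_1) \in C$, so $\nu(y_1) = \true$ by construction and $\nu(\theta) = \true$. Otherwise $R$ is $\false$-default. Let $Y_1 = \set{y_i : \nu(y_i) = \true}$, a finite set; then $C_0 = \set{\theta} \cup \set{(v) : v \in Y_1}$ is a finite subset of $C$, so by finite satisfiability there is an assignment $\mu$ satisfying every formula of $C_0$. In particular $\p{\mu(y_1), \dots, \mu(y_n)} \in R$ and $\mu(y_i) = \true$ whenever $\nu(y_i) = \true$, so $\p{\nu(y_1), \dots, \nu(y_n)}$ is obtained from $\p{\mu(y_1), \dots, \mu(y_n)}$ by resetting to $\false$ exactly those coordinates $i$ with $\nu(y_i) = \false$, the other coordinates being unchanged. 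Since $R$ is $\false$-default it is closed under setting a single coordinate to $\false$, so performing these resets one position at a time keeps the tuple in $R$; hence $\p{\nu(y_1), \dots, \nu(y_n)} \in R$, i.e.\ $\nu(\theta) = \true$, completing the verification.

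I expect the only delicate point to be this last step as formalized inside $\rca$: the number of coordinates to reset is finite but not bounded in advance, so deriving $\p{\nu(y_1), \dots, \nu(y_n)} \in R$ from ``$R$ is $\false$-default'' needs a $\Sigma^0_1$ induction on the number of coordinates already reset, which is available in $\rca$; and one must reset \emph{positions} among $y_1, \dots, y_n$ rather than \emph{variables}, so that a variable occurring several times in $\theta$ causes no trouble. Everything else --- the $\Delta^0_1(C)$ definition of $\nu$, the existence of $\mu$ from finite satisfiability of $C$, and the reduction of the $\true$-case to the $\false$-case --- is routine within $\rca$.
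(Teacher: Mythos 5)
Your proposal is correct and follows essentially the same route as the paper: the same default assignment $\nu(v)=\true$ iff $(v)\in C$, the same use of finite satisfiability applied to $\theta$ together with the unit clauses $(y_i)\in C$, and the same appeal to $\false$-defaultness to pass from $\mu$'s satisfying tuple to $\nu$'s tuple. Your extra care in spelling out the coordinate-by-coordinate reset (with the bounded induction it requires, and the position-versus-variable distinction) only makes explicit a step the paper asserts in one line.
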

\begin{proof}
Assume $[x]$ is the only relation of $S$ which is not $\false$-default. 
Given an instance $(V, C)$ of $\sats{S}$, 
define the assignment $\nu : V \to \set{\false, \true}$ as follows:
$\nu(x) = \true$ iff $(x) \in C$. The assignment $\nu$ exists by $\Delta^0_1$-comprehension.
Suppose for the sake of contradiction that there is a formula $\varphi \in C$
such that $\nu(\varphi) = \false$. If $\varphi = (x)$ for some variable $x$, 
then by definition of $\nu$, $\nu(x) = \true$ hence $\nu(\varphi) = \true$.
So suppose $\varphi = R(x_1, \dots, x_n)$ for some $n \in \Nb$, where $R$ is a $\false$-default 
relation. Let $I = \set{i < n : (x_i) \in C}$. As $C$ is finitely satisfiable,
so is $\varphi \bigwedge_{i \in I} (x_i)$. Let $\mu$ be an assignment
satisfying $\varphi \bigwedge_{i \in I} (x_i)$.
In particular $\mu(x_i) = \true$ for each $i \in I$ and $\mu$ satisfies $\varphi$.
By $\false$-defaultness of $R$, the vector $\vec{r}$ defined by $\vec{r}(i) = \true$
for $i \in I$ and $\vec{r}(i) = \false$ otherwise is in $R$. But by definition of $\nu$,
$\nu(x_i) = \true$ iff $i \in I$, hence $\vec{r} = \nu(x_1)\dots \nu(x_n) \in R$
and $\nu(\varphi) = \true$.
So $\nu$ is a valid assignment and the proof can easily be formalized over $\rca$.
Hence $\rca \vdash \sats{S}$.
The same reasoning holds whenever the only relation of $S$ which is not $\true$-default is~$[\neg x]$.

\end{proof}

The following lemma simply reflects the fact that $\sats{[x \neq y]}$
can be seen as a reformulation of $\scolor_2$ which is equivalent to~$\wkl$ over $\rca$ \cite{hirst1990marriage}.

\begin{lemma}\label{lem:neq-is-wkl}
$\rca$ proves that if $[x \neq y] \in Rep^{*}_{NC}(S)$, then $\wkl \biimp \sats{S}$.
\end{lemma}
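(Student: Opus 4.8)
The plan is to prove both directions of the equivalence $\wkl \biimp \sats{S}$ over $\rca$ under the hypothesis that $[x \neq y] \in Rep^{*}_{NC}(S)$. The forward direction $\wkl \imp \sats{S}$ is immediate: $\sats{S}$ is a restriction of $\sat$ to $S$-formulas, and $\rca \vdash \wkl \imp \sat$ by Theorem~\ref{thm:wkl-sat}, so any instance $(V,C)$ of $\sats{S}$ is in particular an instance of $\sat$ and is solved by the path-through-a-tree argument already given. The content is therefore entirely in $\sats{S} \imp \wkl$, which I would establish by reducing an arbitrary instance of $\scolor_2$ — equivalently, by Hirst~\cite{hirst1990marriage}, of $\wkl$ — to an instance of $\sats{S}$.

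First I would unpack what $[x \neq y] \in Rep^{*}_{NC}(S)$ means concretely over $\rca$: as the paper notes, this is an abbreviation for an arithmetical statement asserting the existence of a fixed relation $R \in S$, a fixed $S$-formula $R(z_1,\dots,z_m)$, and a partition of the coordinate tuple $\vec z$ into two distinguished variables playing the roles of $x$ and $y$ together with auxiliary (existentially quantified) variables $\vec w$, such that $(\exists \vec w)\, R(\dots)$ holds iff $x \neq y$. I would fix this data once and for all. Then, given a $\scolor_2$-instance — think of it as a graph $G$ on vertex set $\Nb$ asking for a $2$-coloring in which no edge is monochromatic, i.e. an instance of the marriage/bipartiteness principle equivalent to $\wkl$ — I build an instance $(V,C)$ of $\sats{S}$ as follows: the variable set $V$ contains one \qt{principal} variable $v_n$ for each vertex $n$, plus, for each edge $e = \{n,n'\}$ of $G$, a fresh batch of auxiliary variables $\vec w_e$; and $C$ consists, for each edge $e=\{n,n'\}$, of the $S$-formula obtained by substituting $v_n$ for $x$, $v_{n'}$ for $y$, and $\vec w_e$ for $\vec w$ in $R(\dots)$. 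Distinct edges get disjoint auxiliary batches, which is exactly why the \qt{fresh variable introduction} remark in the paper matters.

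The two things to check are that this $C$ is finitely satisfiable and that a satisfying assignment yields a valid $2$-coloring. For finite satisfiability: any finite $C_0 \subseteq C$ mentions finitely many edges, hence finitely many vertices; since $G$ is a genuine $\scolor_2$-instance, every finite subgraph is $2$-colorable (indeed any finite graph with no forced monochromatic edge can be extended — one should phrase this carefully, but for the reduction it suffices that each finite subgraph of a bipartite-type instance is colorable), and given such a coloring $\chi$ we set $v_n = \chi(n)$; because $[x\neq y] = [(\exists\vec w)R]$, for each edge $e$ in $C_0$ with endpoints colored differently we may choose witnesses $\vec w_e$ making $R(\dots)$ true, so $C_0$ is satisfied. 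This is all arithmetically bounded and goes through in $\rca$ with $\Sigma^0_1$ induction. Conversely, if $\nu$ satisfies $C$, then for every edge $e=\{n,n'\}$ the formula $R(v_n,\dots,\vec w_e)$ evaluates to true under $\nu$, hence $(\exists \vec w)R(\nu(v_n),\nu(v_{n'}),\vec w)$ holds, hence $\nu(v_n) \neq \nu(v_{n'})$; so $n \mapsto \nu(v_n)$ (reading $\true,\false$ as the two colors) is a proper $2$-coloring, and $\Delta^0_1$-comprehension produces it as a set.

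The main obstacle I anticipate is bookkeeping rather than mathematical depth: making precise, inside $\rca$, the arithmetical statement encoded by \qt{$[x\neq y]\in Rep^{*}_{NC}(S)$} — in particular handling the case where the auxiliary tuple $\vec w$ is empty or where $R$ has repeated coordinate slots — and correctly matching up the formula-coding conventions of $\sats{S}$ (sets of $S$-formulas coded by numbers, as in Theorem~\ref{thm:wkl-sat}) with the graph-coding of $\scolor_2$. One must also be slightly careful about which formulation of the $\wkl$-equivalent coloring principle one reduces from, since $\scolor_2$ as used here is the bipartite/marriage principle of Hirst; but once the correct finite-extendibility property of its instances is in hand, the reduction above is routine and uniform, so the lemma follows.
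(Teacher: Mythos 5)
Your proposal is correct and follows essentially the same route as the paper: the forward direction is inherited from $\wkl \imp \sat$, and the reverse direction reduces an instance of $\scolor_2$ (locally bipartite graph $2$-coloring, equivalent to $\wkl$ by Hirst) to $\sats{S}$ by attaching, for each edge, the $S$-formula $R(x,y,\vec z)$ witnessing $[x\neq y]\in Rep^{*}_{NC}(S)$ with a fresh batch of auxiliary variables, checking finite satisfiability via finite $2$-colorability and reading off the bipartition from a satisfying assignment. The bookkeeping concerns you raise are real but minor, and the paper handles them exactly as you suggest.
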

\begin{proof}
As $\rca \vdash \wkl \imp \sat$, it suffices to prove that
$\rca \vdash \sats{S} \imp \wkl$ to obtain desired equivalence.
Fix an infinite, locally bipartite, computable graph $G = (V, E)$ and 
let $\theta \in Gen^{*}_{NC}(S)$ be such that $[\theta] = [x \neq y]$.
By definition, $\theta = (\exists \vec{z})R(x, y, \vec{z})$ for some $R \in S$. 
Take an infinite set~$W$ of fresh variables disjoint from~$V$ and
define an instance $(V \cup W, C)$ of $\sats{S}$
by taking $C = \{ R(x, y, \vec{z}) : x < y \wedge \set{x, y} \in E \wedge (\vec{z} \in W \mbox{ has not yet been used})\}$.
The set $C$ is finitely satisfiable because~$G$ is locally bipartite.
Let $\nu : V \cup W \to \Bool$ be an assignment satisfying $C$
and let $P_0 = \{x \in V : \nu(x) = \false\}$ and $P_1 = \{x \in V : \nu(x) = \true\}$.
We claim that $P_0, P_1$ is a bipartition of~$G$. Suppose for the sake of absurd that the exists
an $i < 2$ and two elements $x < y \in P_i$ such that $\{x, y\} \in E$.
Then there exists fresh variables $\vec{z} \in W$ such that $R(x, y, \vec{z}) \in C$.
In particular, $\nu$ satisfies $R(x,y, \vec{z})$, hence the formula $\theta(x,y)$
so $\nu(x) \neq \nu(y)$, contradicting the assumption that $x, y \in P_i$.
Hence $\rca \vdash \sats{S} \imp \scolor_2$. 
\end{proof}

Theorem~\ref{thm:inf-schaefer-dichotomy} is proven by a case analysis using
Lemma~\ref{lem:schaefer-4.3}, by noticing that when we are not in cases
already handled by Lemma~\ref{lem:i-valid-trivial} and Lemma~\ref{lem:neq-is-wkl},
we can find $n$-ary formulas encoding $[x]$ and $[\neg x]$ with $n \geq 2$.
Thus diagonalizing against some values becomes a $\Sigma^0_1$ event.

\begin{proof}[Proof of Theorem~\ref{thm:inf-schaefer-dichotomy}]
We reason by case analysis. Cases where every relation in $S$ is $\false$-valid (resp. $\true$-valid) are trivial.
Cases where the only relation in $S$ which is not $\false$-default (resp. $\true$-default) is $[x]$ (resp. $[\neg x]$),
and whenever $[x \neq y] \in Rep^{*}_{NC}(S)$ are already handled by 
Lemma~\ref{lem:i-valid-trivial} and Lemma~\ref{lem:neq-is-wkl}.

In the remaining case, by Lemma~\ref{lem:schaefer-4.3}, $[x]$ and 
$[\neg x] \in Rep^{*}_{NC}(S)$. 
First we show that it suffices to find two relations $R_1, R_2 \in S$
together with two formulas $\psi_1, \psi_2 \in Gen^{*}_{NC}(S)$
such that $x_1 \not \in \opvars{\psi_1} \cup \opvars{\psi_2}$ and the following holds
$$
[(\exists \vec{z})R_1(x_1, \vec{z}) \wedge \psi_1(\vec{z})] = [x_1]
\hspace{10pt} \mbox{ and } \hspace{10pt} 
[(\exists \vec{z})R_2(x_1, \vec{z}) \wedge \psi_2(\vec{z})] = [\neg x_1]
$$
to prove the existence of a path through an infinite binary tree. Then, we show that such relations exist.
Note that the difference with the assumption that $[x]$ and 
$[\neg x] \in Rep^{*}_{NC}(S)$ is that the relations $R_1$ and $R_2$ have arity greater than 1,
hence the relations $R_1$ and $R_2$ may be added arbitrarily late to the set of formulas
with fresh variables.
Fix two disjoint sets of variables: $V = \set{x_\sigma : \sigma \in 2^{<\Nb}}$
and $W = \set{y_1, \dots}$. Let~$T \subseteq 2^{<\Nb}$ be an infinite tree.
We define an instance $(V \cup W, C)$
of $\sats{S}$ such that every satisfying assignment computes an infinite path through~$T$.
We define the set $C$ by stages $C_0 = \emptyset \subseteq C_1 \subseteq \dots$
Assume that at stage $s$, the existence of each $S$-formula
over variables $\set{x_{\sigma}, y_i : \sigma \in 2^i, i < s}$ has been decided. 
Given some string~$\sigma \in 2^{<\Nb}$, we denote by~$T^{[\sigma]}_s$
the set of strings~$\tau \in T$ of length~$s$ such that~$\tau \succeq \sigma$.
\begin{itemize}
	\item[1.] If $T^{[\sigma^\frown 0]}_s$ is empty but not~$T^{[\sigma^\frown 1]}_s$ for some~$\sigma \in 2^{<s}$,
then add $R_2(x_\sigma, \vec{y})$ and $\psi_2(\vec{y})$ to~$C_s$ for some fresh variables
$\vec{y} \in W \setminus \set{y_i : i < s}$.
	\item[2.] If $T^{[\sigma^\frown 1]}_s$ is empty but not~$T^{[\sigma^\frown 0]}_s$ for some~$\sigma \in 2^{<s}$,
then add $R_1(x_\sigma, \vec{y})$ and $\psi_1(\vec{y})$
to~$C_s$ for some fresh variables $\vec{y} \in W \setminus \set{y_i : i < s}$.
\end{itemize}
This finishes the construction.
We have ensured that for any satisfying assignment $\nu$ for $C$ and any string~$\sigma \in T$
inducing an infinite subtree, $\sigma^\frown \nu(x_\sigma)$ also induces an infinite subtree.
Define the strictly increasing sequence of strings~$\sigma_0 = \epsilon \prec \sigma_1 \prec \dots$
by~$\sigma_{s+1} = \sigma_s^\frown \nu(x_{\sigma_s})$.
The set~$P = \bigcup_s \sigma_s$ is an infinite path through~$T$.
This proof can easily be formalized in $\rca$. Hence $\rca \vdash \sats{S} \imp \wkl$.

We now find the relations $R_1, R_2 \in S$ and define the formulas $\psi_1$ and $\psi_2 \in Gen^{*}_{NC}(S)$.
Suppose there exists a relation $R_1 \in S$ which is not $\false$-valid and is different from $[x]$.
Define the formula
$\varphi = R_1(x_1, \dots)$ and let $\nu \in \opass{\varphi}$ be such that $\forall U \subseteq \nu^{-1}(\set{\true})$, 
the assignment which coincides with $\nu$ except for $U$ does not satisfy $\varphi$. 
Because $R_1$ is not $\false$-valid, $\nu^{-1}(\set{\true}) \neq \emptyset$. Suppose w.l.o.g. that $x_1 \in \nu^{-1}(\set{\true})$.
Then the following holds for some constants $i_2, i_3, \dots$
$$
[\varphi \bigwedge_{x \in \nu^{-1}(\set{\true}) \setminus \set{x_1}}(x)
\bigwedge_{x \in \nu^{-1}(\set{\false})}(\neg x)] = [x_1 \wedge (x_2 = i_2) \wedge (x_3 = i_3) \dots]
$$

Suppose now the only non $\false$-valid relation in $S$ is $[x]$, in which case there is a $\false$-valid relation $R_1 \in S$
which is not $\false$-default. Thus there is a non-empty finite set $I \subset \omega$ and a vector $\vec{r} \in R_1$
such that $\vec{r}(i) = \true$ for each $i \in I$, but for every such $\vec{r} \in R_1$,
$\exists j \not \in I$ such that $\vec{r}(j) = \true$. Consider a minimal (in pointwise natural order) such $\vec{r}$.
Define the formula $\varphi = R_1(x_1, \dots)$. Suppose without loss of generality that $1 \not \in I$ and $\vec{r}(1) = \true$.
Then the following holds for some constants $i_2, i_3, \dots$
$$
[\varphi \bigwedge_{i \in I}(x_i) \bigwedge_{\vec{r}(i) = 0}(\neg x_i)]
= [x_1 \wedge (x_2 = i_2) \wedge (x_3 = i_3) \dots]
$$

Similarly we can take any relation $R_2$ of $S$ which is not $\true$-valid and is different from $[\neg x]$
or which is $\true$-valid but not $\true$-default
to construct an $S$-formula $\psi_2 \in Gen^{*}_{NC}(S)$ with $y \not \in \opvars{\psi_2}$ 
and constants $i_2, i_3, \dots$
such that $[R_2(x_1, \dots) \wedge \psi_2] = [\neg x_1 \wedge (x_2 = i_2) \wedge (x_3 = i_3) \dots]$.
This finishes the proof.
\end{proof}

\section{Ramsey-type Schaefer's dichotomy theorem}

The proof of Theorem~\ref{thm:dichotomy-ramsey-schaefer}
can be split into four steps, each of them being dichotomies themselves.
The first one, Theorem~\ref{thm:schaefer-ramsey-rca-rcolor}, states
the existence of a gap between provability in $\rca$
and implying $\lrsats{[x \neq y]}$ over $\rca$.
Then we focus successively on two classes of boolean formulas:
bijunctive formulas (Theorem~\ref{thm:schaefer-bij-dichotomy})
and affine formulas (Theorem~\ref{thm:schaefer-affine-dichotomy})
whose corresponding principles happen to be either a consequence of
$\lrsats{[x \neq y]}$ or equivalent to the full class of bijunctive (resp. affine) formulas.
Remaining cases are handled by Theorem~\ref{thm:dichotomy-remaining}.
We first state a trivial relation between a satisfaction principle and its Ramseyan version.

\begin{lemma}\label{lem:sats-lrsats}
$\rca \vdash \sats{S} \imp \lrsats{S}$
\end{lemma}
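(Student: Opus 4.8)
The plan is to extract a homogeneous set directly from a global satisfying assignment. Given an instance $(V, C, X)$ of $\lrsats{S}$, the pair $(V, C)$ is itself an instance of $\sats{S}$ (a finitely satisfiable set of $S$-formulas over an infinite variable set), so I would first invoke $\sats{S}$ to obtain an infinite assignment $\nu : V \to \Bool$ satisfying $C$.

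Next I would split the infinite set $X$ according to the value taken by $\nu$: let $X_\true = \set{x \in X : \nu(x) = \true}$ and $X_\false = \set{x \in X : \nu(x) = \false}$. Both sets exist by $\Delta^0_1$-comprehension, since $\nu$ and $X$ are set parameters and the defining conditions are $\Delta^0_0$ in these parameters. As $X = X_\true \cup X_\false$ with $X$ infinite, a two-set pigeonhole argument available in $\rca$ (were both $X_\true$ and $X_\false$ bounded, so would be their union $X$) yields some $c \in \Bool$ with $X_c$ infinite.

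Finally I claim $H := X_c$ is an infinite subset of $X$ that is homogeneous for $C$ with witnessing colour $c$. Indeed, for every finite $C_0 \subseteq C$, the assignment $\nu$ satisfies the conjunction $\bigwedge C_0$ because $\nu$ satisfies $C$, and $\nu(a) = c$ for all $a \in H$ by construction; so the single assignment $\nu$ simultaneously witnesses the homogeneity condition for every finite subset of $C$. This finishes the proof. There is no real obstacle here beyond confirming that the comprehension and the two-set pigeonhole principle used are provable in $\rca$, both of which are routine; the only point of the lemma is the observation that restricting an infinite satisfying assignment to the region of $X$ on which it is constant produces a homogeneous set.
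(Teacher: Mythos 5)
Your proof is correct and takes essentially the same route as the paper: apply $\sats{S}$ to get a global satisfying assignment $\nu$, split the prescribed infinite set by the value of $\nu$, and take whichever part is infinite as the homogeneous set, with $\nu$ itself witnessing homogeneity. You merely spell out the final verification that the paper leaves implicit.
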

\begin{proof}
Let $(V, C, L)$ be an instance of $\lrsats{S}$. Let $\nu: V \to \Bool$ be a satisfying
assignment for $C$. Then either $\set{x \in L : \nu(x) = \true}$ or $\set{x \in L : \nu(x) = \false}$
is infinite, and both sets exist by $\Delta^0_1$-comprehension.
\end{proof}

\begin{definition}
Let $S$ be a set of relations over Booleans. 
The class of existentially quantified $S$-formulas with constants and closed under conjunction
-- i.e. of the form $(\exists \vec{x})\bigwedge_{i < n} R_i[\vec{x}, \vec{y}, \true, \false]$ with $R_i \in S$ -- is denoted
by $Gen(S)$. We also define $Rep(S) = \set{[R] : R \in Gen(S)}$, ie. the
relations represented by existentially quantified $S$-formula with constants and closed under conjunction.
By abuse of notation, we may use $Rep(R)$ when $R$ is a relation to denote $Rep(\set{R})$.
We can also define similar relations without constants, denoted by $Gen_{NC}$ and $Rep_{NC}$.
\end{definition}

\begin{lemma}\label{lem:rep-lrsats}
$\rca$ proves: If $T$ is a sequence of Boolean relations such that $[x \neq y] \in Rep_{NC}(T)$,
and~$S$ is a sequence of relations in~$Rep_{NC}(T)$, 
then $\lrsats{T} \imp \lrsats{S}$.
\end{lemma}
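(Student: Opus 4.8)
The plan is to reduce an arbitrary instance of $\lrsats{S}$ to an instance of $\lrsats{T}$ by unfolding each $S$-formula into a finite conjunction of $T$-formulas over an enlarged set of variables, solving the $T$-instance, and transferring the homogeneous set back without change. Since $S$ is a sequence of relations in $Rep_{NC}(T)$, for every relation $R$ occurring in $C$ I would fix a formula $\theta_R=(\exists\vec z)\bigwedge_{i<n_R}R_i(\vec x,\vec z)\in Gen_{NC}(T)$ with $[\theta_R]=R$; over $\rca$ such a representation is obtained uniformly by searching for its least code, the search terminating precisely because $R\in Rep_{NC}(T)$. Likewise I would fix $\eta=(\exists\vec z)\bigwedge_{i<m}Q_i(x,y,\vec z)\in Gen_{NC}(T)$ with $[\eta]=[x\neq y]$, which exists by the hypothesis on $T$.

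Given an instance $(V,C,L)$ of $\lrsats{S}$, I would take an infinite set $W$ of fresh variables disjoint from $V$, put $V'=V\cup W$, and assemble a set $C'$ of $T$-formulas over $V'$ as follows: to each $\varphi=R(y_1,\dots,y_k)\in C$ allocate a private finite block of variables of $W$ and place in $C'$ the $T$-formulas obtained from the conjuncts of $\theta_R$ by substituting $y_1,\dots,y_k$ for the free variables of $\theta_R$ and that block for its quantified variables; and, for each variable $v$ occurring in $C$, place in $C'$ the $T$-formulas obtained from the conjuncts of $\eta(v,w_v)$, $w_v$ a further fresh variable. The $\eta$-constraints are harmless since every $w_v$ is fresh and $[\eta]=[x\neq y]$, and their sole role is to force $\opvars{C}\subseteq\opvars{C'}$. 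Then $(V',C',L)$ is an instance of $\lrsats{T}$: $L\subseteq V\subseteq V'$ is infinite, and $C'$ is finitely satisfiable, because given a finite $C'_0\subseteq C'$ one takes a finite $C_0\subseteq C$ whose unfolding contains $C'_0$, satisfies $C_0$ by finite satisfiability of $C$, and then fills in each fresh block with the values of an existential witness for the relevant instance of $\theta_R$ or $\eta$ --- the blocks being pairwise disjoint, these choices never conflict. The whole construction is $\Delta^0_1$ in the data, so $C'$ exists over $\rca$.

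Next I would apply $\lrsats{T}$ to $(V',C',L)$ to obtain an infinite $H\subseteq L$ homogeneous for $C'$ with some colour $c\in\Bool$, and check that $H$ is then homogeneous for $C$ with the same colour. Indeed, for a finite $C_0\subseteq C$ its unfolding $C'_0\subseteq C'$ is finite, so some assignment $\nu$ satisfies $C'_0$ with $\nu(a)=c$ for all $a\in H\cap\opvars{C'_0}$; restricting $\nu$ to $V$ gives an assignment satisfying $C_0$, because for each $R(\vec y)\in C_0$ the values of the associated fresh block witness $(\exists\vec z)\bigwedge_i R_i(\nu(\vec y),\vec z)$, i.e. $\nu(\vec y)\in[\theta_R]=R$; and since $\opvars{C}\subseteq\opvars{C'}$ we have $H\cap\opvars{C_0}\subseteq H\cap\opvars{C'_0}$, so $\nu(a)=c$ there as well.

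I expect the delicate step to be the preservation of finite satisfiability. It goes through precisely because $Gen_{NC}(T)$ is closed under conjunction: a single occurrence of $R\in S$ unfolds into a \emph{conjunction} of $T$-formulas that share one block of fresh variables and are therefore all satisfied together by one existential witness, whereas distinct occurrences use disjoint blocks; the tempting shortcut of giving each conjunct its own witnesses would break this. The hypothesis $[x\neq y]\in Rep_{NC}(T)$ enters only to furnish the innocuous $T$-constraints that keep every variable of $C$ present in $C'$, which is what makes the colour condition transfer immediately; and the last point --- that choosing the representations and building $C'$ is a $\Delta^0_1$ procedure --- is routine.
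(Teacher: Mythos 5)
Your reduction is the same one the paper uses --- unfold each $S$-formula into the conjuncts of a $Gen_{NC}(T)$-representation, giving each occurrence its own block of fresh existential witnesses, and pull the homogeneous set back --- and the finite-satisfiability and colour-transfer arguments are fine. But the step you dismiss as routine, namely that $C'$ ``is $\Delta^0_1$ in the data,'' is exactly the point the paper has to work for, and your construction does not deliver it. A conjunct $R_i(\vec x,\vec z)$ of $\theta_R$ need not mention any quantified variable; after substitution it becomes a $T$-formula over original variables of $V$ only, carrying no trace of the block (hence of the stage) that produced it. Such a formula can be generated by infinitely many $\varphi_s\in C$ (any $\varphi_s$ whose variable tuple contains the relevant variables, with the remaining variables arbitrary), so membership in $C'$ is in general only $\Sigma^0_1$, and $C'$ need not exist by $\Delta^0_1$ comprehension. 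Your extra constraints $\eta(v,w_v)$ do not help here, since the problematic formulas still contain no fresh variable.

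This is also where you have misread the role of the hypothesis $[x\neq y]\in Rep_{NC}(T)$: it is not there to pad $\opvars{C'}$ (which, as you half-observe, is not really needed for the colour transfer), but to make equality expressible, $[x=y]=[(\exists z)\,x\neq z\wedge z\neq y]\in Rep_{NC}(T)$. The paper replaces each unfolded conjunct $R(x_{i_1},\dots,x_{i_n})$ by $R(y_{j_1},\dots,y_{j_n})$ on entirely fresh variables together with the linking constraints $x_{i_k}=y_{j_k}$; then every formula added at stage $s$ contains a fresh variable of index at least $s$, so membership in the constructed set is decidable by running finitely many stages. To repair your argument you must either add this renaming step (using the $\neq$-representation you already fixed) or explicitly recast the principle so that $C'$ may be presented as a computable sequence rather than a set; as written, the ``routine'' claim is a genuine gap.
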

\begin{proof}
Let $(V, C, L)$ be an instance of $\lrsats{S}$. Say $V = \set{x_0, x_1, \dots}$ and $C = \set{\varphi_0, \varphi_1, \dots}$.
Define an instance $(V \cup F, D, L)$ of $\lrsats{T}$
where $F = \set{y_0, y_1, \dots}$ is a set of fresh variables disjoint from~$V$,
and $D$ is a set of formulas defined by stages as follows.
At stage 0, $D = \emptyset$.
In order to make $D$ computable, we will ensure that after stage $s$,
no formula over $\set{x_i, y_i : i < s}$ will be added to~$D$.
At stage $s$, we want to add constraints of $\varphi_s$ to~$D$.
Because $S \subseteq Rep_{NC}(T)$ and $T$ is c.e., we can effectively find a formula 
$\psi \in Gen_{NC}(T)$ equivalent to~$\varphi_s$ and translate it into a finite set of formulas $\psi^{*}$ as follows:
$(\exists z.\psi_1)^{*} \simeq (\psi_1[y/z])^{*}$ where $y \in F$ is a fresh variable,
$(\psi_1 \wedge \psi_2)^{*} \simeq \psi_1^{*} \cup \psi_2^{*}$, 
$R(x_{i_1}, \dots, x_{i_n})^{*} \simeq \{R(y_{j_1}, \dots, y_{j_n}), x_{i_1} = y_{j_1}, \dots, x_{i_n} = y_{j_n} \}$
where $y_{j_k}$ are fresh variables of $F$ and $x = y$ is a notation for the composition of $(\exists z)x \neq z \wedge z \neq y$.
Add $\psi^{*}$ to~$D$. It is easy to check that any solution to 
$(V \cup F \cup \set{c_0, c_1}, D, L)$ is a solution to $(V, C, L)$.
\end{proof}

\subsection{From provability to~$\lrsats{[x \neq y]}$}

Our first dichotomy for Ramseyan principles is between $\rca$ and $\lrsats{[x \neq y]}$.

\begin{theorem}\label{thm:schaefer-ramsey-rca-rcolor}
If $S$ satisfies one of the conditions (a)-(d) below then $\rca \vdash \lrsats{S}$.
Otherwise $\rca \vdash \lrsats{S} \imp \lrsats{[x \neq y]}$.
\begin{itemize}
\begin{minipage}[t]{0.5\linewidth} 
  \item[(a)] Every relation in $S$ is $\false$-valid.
  \item[(b)] Every relation in $S$ is $\true$-valid.
\end{minipage}
\begin{minipage}[t]{0.5\linewidth} 
  \item[(c)] Every relation in $S$ is horn.
  \item[(d)] Every relation in $S$ is co-horn.
\end{minipage}
\end{itemize}
\end{theorem}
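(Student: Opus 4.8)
The plan is to establish the two halves of the dichotomy separately. For the positive side, note that cases (a) and (b) are handled exactly as in Lemma~\ref{lem:i-valid-trivial}: the constant assignment $\nu \equiv \false$ (resp. $\nu \equiv \true$) satisfies every finite subset of $C$, so for any infinite $L$ one of $\{x \in L : \nu(x) = \false\}$, $\{x \in L : \nu(x) = \true\}$ is infinite and exists by $\Delta^0_1$-comprehension; in fact this is just $\sats{S} \imp \lrsats{S}$ (Lemma~\ref{lem:sats-lrsats}) together with the trivial fact that $\rca \vdash \sats{S}$. The new content is cases (c) and (d): if every relation in $S$ is horn, I would argue that $\rca \vdash \lrsats{S}$ by the following strategy. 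Given an instance $(V, C, L)$, enumerate $L = \{a_0, a_1, \dots\}$ and attempt to build the homogeneous set with colour $c = \false$: put $a_n$ into $H$ unless adding it would destroy finite satisfiability under the all-$\false$-on-$H$ constraint. Because horn clauses are closed under the pointwise minimum of satisfying assignments, the set of formulas of $C$ "forced true" by a partial $\false$-assignment is well-behaved, and one shows that either infinitely many $a_n$ survive (giving an infinite $H \subseteq L$ homogeneous with colour $\false$) or, past some stage, every candidate is rejected because some unit clause $(a_n) \in C$ forces it true — but there can be at most finitely many such among any finite segment only if... and here one switches to colour $\true$, using the dual closure property. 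The key algebraic input is the polymorphism characterisation of horn (closure under binary $\min$) and of co-horn (closure under binary $\max$); this is what makes the greedy "default to $\false$ / default to $\true$" construction produce, from a $\Delta^0_1$ decision procedure, an infinite homogeneous subset of $L$.

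For the negative side, suppose $S$ satisfies none of (a)--(d). By Lemma~\ref{lem:schaefer-4.3}, applied to $S$, we are in case (c) or (d) of that lemma, so either $[x], [\neg x] \in Rep^{*}_{NC}(S)$ or $[x \neq y] \in Rep^{*}_{NC}(S)$; in the latter case we are essentially done by Lemma~\ref{lem:rep-lrsats} (with $T = S$), since $[x\neq y] \in Rep_{NC}(S)$ lets us reduce $\lrsats{[x\neq y]}$ to $\lrsats{S}$ — but this needs the version of Schaefer 4.3 that produces a formula *without* constants, so I would first note that not being $\false$-valid-or-$\true$-valid-or-horn-or-co-horn is exactly the hypothesis under which Schaefer's constant-free reductions yield $[x \neq y] \in Rep_{NC}(S)$. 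The harder subcase is when we only get $[x], [\neg x] \in Rep^{*}_{NC}(S)$: here I would mimic the "diagonalization becomes a $\Sigma^0_1$ event" argument from the proof of Theorem~\ref{thm:inf-schaefer-dichotomy}. Since $S$ is not horn and not co-horn, there are relations $R_1, R_2 \in S$ witnessing failure of each closure property, and — exactly as in the last part of that proof — one extracts formulas $\psi_1, \psi_2 \in Gen^{*}_{NC}(S)$ with a distinguished free variable $x_1$ of arity $\geq 2$ such that $[(\exists\vec z)R_1(x_1,\vec z)\wedge\psi_1(\vec z)] = [x_1]$ and dually for $\neg x_1$, and *crucially* these can be appended with fresh variables arbitrarily late. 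One then builds, from an infinite binary tree $T$ together with an infinite set $L$ indexing the tree levels, an $\lrsats{S}$ instance whose every homogeneous set of colour $\false$ picks out a path avoiding the $1$-branches and every homogeneous set of colour $\true$ picks out a path avoiding the $0$-branches — so a homogeneous subset of $L$ computes a set meeting a path through $T$, i.e. an $\rwkl$ solution. Since $\rca \vdash \rwkl \biimp \lrsats{[x\neq y]}$ is *not* available (indeed $\rwkl$ is strictly weaker), I would instead aim directly for $\lrsats{[x\neq y]}$: I would show that the failure of (a)--(d) forces, via Lemma~\ref{lem:schaefer-4.3} applied more carefully, the stronger conclusion $[x\neq y]\in Rep_{NC}(S)$, and then invoke Lemma~\ref{lem:rep-lrsats}.

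The main obstacle I anticipate is the gap between the two conclusions of Lemma~\ref{lem:schaefer-4.3}(c) and (d): reaching $\lrsats{[x\neq y]}$ genuinely requires $[x\neq y]\in Rep_{NC}(S)$, whereas cases where one only has $[x],[\neg x]\in Rep^{*}_{NC}(S)$ a priori only give something like $\rwkl$, which is strictly weaker. So the heart of the proof must be a Schaefer-style argument showing that if $S$ is neither $\false$-valid, $\true$-valid, horn, nor co-horn, then in fact $[x\neq y] \in Rep_{NC}(S)$ — i.e. one can build the "not-equal" relation from $S$ using existential quantification and conjunction but *without constants*. This is exactly the constant-free strengthening of Schaefer's Lemma~4.3 under the four stated hypotheses, and verifying that his construction goes through without constants (using non-$\false$-validity and non-$\true$-validity to supply the roles that $\true$ and $\false$ would otherwise play) is where the real work lies. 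Once that is in hand, the positive direction via the $\min$/$\max$ polymorphism greedy construction and the negative direction via Lemma~\ref{lem:rep-lrsats} both go through routinely, and the theorem follows by the case analysis: (a)--(d) give provability in $\rca$, and their joint failure gives $[x\neq y]\in Rep_{NC}(S)$ hence $\rca\vdash\lrsats{S}\imp\lrsats{[x\neq y]}$.
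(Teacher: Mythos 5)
Your overall architecture matches the paper's: cases (a)--(b) are trivial, cases (c)--(d) are handled by the min/max polymorphism closure of horn/co-horn relations, and the negative half reduces to showing $[x \neq y] \in Rep_{NC}(S)$ and invoking Lemma~\ref{lem:rep-lrsats}. But you leave the decisive step as an acknowledged gap: you correctly diagnose that everything hinges on getting $[x \neq y]$ into $Rep_{NC}(S)$ (without constants) when (a)--(d) all fail, and then propose to obtain this by re-verifying Schaefer's construction constant-free, declaring that this is ``where the real work lies'' without doing it. The paper closes this gap with a short observation you are missing (Lemma~\ref{lem:case-neq-or-simple}): if neither (a) nor (b) holds and $[x\neq y]\notin Rep_{NC}(S)$, then Lemma~\ref{lem:schaefer-4.3} already places $[x]$ and $[\neg x]$ in $Rep_{NC}(S)$; but once the two unary constant relations are representable without constants, constants come for free, so $Rep_{NC}(S) = Rep(S)$ and Schaefer's Lemma 3.2.1 \emph{with} constants applies verbatim, forcing $S$ to be all horn or all co-horn (i.e.\ case (c) or (d)) on pain of $[x\neq y]\in Rep(S)=Rep_{NC}(S)$. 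No constant-free reworking of Schaefer is needed. Your detour through the tree/$\rwkl$ argument is rightly abandoned, but for a backwards reason: $\rwkl\equiv\lrsat$ \emph{implies} $\lrsats{[x\neq y]}$, so it is not ``strictly weaker''; the actual obstruction is that $S=\set{[x\neq y]}$ itself falls into the ``otherwise'' case, so proving $\lrsats{S}\imp\rwkl$ there would settle the open question of whether $\rcolor_2$ implies $\rwkl$.

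The positive half for horn also needs tightening. Your greedy construction produces a $\Sigma^0_1$ (not $\Delta^0_1$) set, and your claim that a candidate is rejected ``because some unit clause $(a_n)\in C$ forces it true'' is not right: a variable can be forced true through a chain of horn implications with no unit clause present. The paper's Theorem~\ref{thm:rca-proves-lrsathorn} makes the dichotomy precise: let $F\subseteq L$ be the ($\Sigma^0_1$) set of variables forced to $\true$ by some finite subset of $C$; if $F$ is infinite, thin it to an infinite $\Delta^0_1$ subset, homogeneous with colour $\true$; if $F$ is finite, $L\setminus F$ is homogeneous with colour $\false$, because by closure of the satisfying assignments of a finite horn conjunction under pointwise conjunction, a finite set fails to be $\false$-homogeneous only if one of its members is individually forced true. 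That last use of the min-polymorphism is exactly the point your sketch gestures at but does not pin down.
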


The proof of Theorem~\ref{thm:schaefer-ramsey-rca-rcolor} follows Theorem~\ref{thm:rca-proves-lrsathorn}.

\begin{lemma}[{{Schaefer in \cite[3.2.1]{schaefer1978complexity}}}]\label{lem:schaefer-3.2.1}
$\rca$ proves: If $S$ contains some relation which is not horn and some relation which
is not co-horn, then $[x \neq y] \in Rep(S)$.
\end{lemma}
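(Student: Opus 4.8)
The plan is to reconstruct Schaefer's combinatorial argument from scratch and observe that, because all the relations in play are fixed finite objects, nothing beyond elementary manipulation is needed, so the whole thing goes through over $\rca$. The starting point is the closure characterization of the exceptional classes: a relation $R$ is horn if and only if it is closed under componentwise $\andd$ (binary minimum), and $R$ is co-horn if and only if it is closed under componentwise $\orr$ (binary maximum). The only nonobvious direction — that a relation closed under $\andd$ is defined by a horn formula (namely the conjunction of all horn clauses satisfied by every tuple of $R$), and dually for co-horn — is a finite construction on the fixed relation and is provable over $\rca$. Applying the contrapositives to the hypotheses, I get a relation $R_1 \in S$ with tuples $\vec a, \vec b \in R_1$ such that $\vec a \andd \vec b \notin R_1$, and a relation $R_2 \in S$ with tuples $\vec c, \vec d \in R_2$ such that $\vec c \orr \vec d \notin R_2$.

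Next I would use $R_1$ to produce a binary relation in $Rep(S)$. Partition the coordinate set of $R_1$ according to the value of the pair $(\vec a(i), \vec b(i)) \in \Bool^2$, and form the $S$-formula with constants obtained from $R_1$ by substituting $\false$ into the coordinates where $(\vec a(i),\vec b(i)) = (\false,\false)$, $\true$ where it is $(\true,\true)$, a single variable $x$ where it is $(\false,\true)$, and a single variable $y$ where it is $(\true,\false)$; call the resulting binary relation $R_1'$. Then $(\false,\true) \in R_1'$ (witnessed by $\vec a$), $(\true,\false) \in R_1'$ (witnessed by $\vec b$), and $(\false,\false) \notin R_1'$ (it would be witnessed by $\vec a \andd \vec b$). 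The coordinate classes for $(\false,\true)$ and $(\true,\false)$ are both nonempty: otherwise $\vec a \le \vec b$ or $\vec b \le \vec a$ pointwise, so $\vec a \andd \vec b \in \set{\vec a, \vec b} \subseteq R_1$, a contradiction. Hence $x$ and $y$ genuinely occur, and $R_1'$ is either $[x \neq y]$ or $[x \orr y]$. The dual construction on $R_2$ yields a binary $R_2' \in Rep(S)$ with $(\false,\true),(\true,\false) \in R_2'$ and $(\true,\true) \notin R_2'$, so $R_2'$ is either $[x \neq y]$ or $[\neg x \orr \neg y]$.

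To finish: if either $R_1'$ or $R_2'$ already equals $[x \neq y]$ we are done. Otherwise $R_1' = [x \orr y]$ and $R_2' = [\neg x \orr \neg y]$; since $Gen(S)$ is closed under conjunction, the formula $R_1'(x,y) \andd R_2'(x,y)$ lies in $Gen(S)$, and a one-line truth-table check gives $[R_1'(x,y) \andd R_2'(x,y)] = [x \neq y]$. Thus $[x \neq y] \in Rep(S)$ in every case.

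I do not expect a genuine obstacle here. The argument is entirely finitary once the witnessing tuples $\vec a, \vec b, \vec c, \vec d$ are fixed, and — unlike some of the Schaefer lemmas that concern the non-conjunctive classes — it is applied in exactly the conjunction-closed setting $Rep(S)$ used in the statement, so no special precaution is needed. The two points deserving care are the bookkeeping showing that $x$ and $y$ do not degenerate (handled above via nonemptiness of the relevant coordinate classes) and the fact that the closure characterizations of horn and co-horn relations, together with the finite formulas witnessing them, are available over $\rca$; both are unproblematic because they concern relations on $\Bool^n$ for a fixed $n$.
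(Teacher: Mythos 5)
The paper states this lemma without its own proof, citing Schaefer's Lemma~3.2.1 directly, so there is no in-paper argument to compare against; your reconstruction is correct and is in fact exactly Schaefer's original argument: use the closure characterizations of horn and co-horn relations (the paper's Theorem~\ref{thm:schaefer-closure}, whose provability over $\rca$ the paper asserts) to extract witnesses $\vec{a},\vec{b} \in R_1$ with $\vec{a}\andd\vec{b}\notin R_1$ and $\vec{c},\vec{d}\in R_2$ with $\vec{c}\orr\vec{d}\notin R_2$, identify coordinates according to the four value pairs to obtain a binary relation equal to $[x\orr y]$ or $[x\neq y]$ from $R_1$ and to $[\neg x\orr\neg y]$ or $[x\neq y]$ from $R_2$, and conjoin in the remaining case. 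Your nondegeneracy check for the two variable classes and your observation that constants and conjunction are precisely what $Rep(S)$ (as opposed to $Rep^{*}_{NC}(S)$) permits are both correct, so the argument goes through as written.
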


\begin{lemma}\label{lem:case-neq-or-simple}
$\rca$ proves that at least one of the following holds:
\begin{itemize}
\begin{minipage}[t]{0.5\linewidth} 
  \item[(a)] Every relation in $S$ is $\false$-valid.
  \item[(b)] Every relation in $S$ is $\true$-valid.
  \item[(c)] Every relation in $S$ is horn.
\end{minipage}
\begin{minipage}[t]{0.5\linewidth} 
  \item[(d)] Every relation in $S$ is co-horn.
  \item[(e)] $[x \neq y] \in Rep_{NC}(S)$.
\end{minipage}
\end{itemize}
\end{lemma}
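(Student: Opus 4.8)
The plan is to reduce this to the two Schaefer-style lemmas already available, namely Lemma~\ref{lem:schaefer-4.3} and Lemma~\ref{lem:schaefer-3.2.1}, together with the observation that the disjunction being proved is slightly weaker than it looks because it uses $Rep_{NC}$ rather than $Rep$. First I would reason by cases on the alternatives supplied by Lemma~\ref{lem:schaefer-4.3}: $\rca$ proves that (a) every relation in $S$ is $\false$-valid, or (b) every relation in $S$ is $\true$-valid, or (c) $[x]$ and $[\neg x]$ both lie in $Rep^{*}_{NC}(S)$, or (d) $[x\neq y]\in Rep^{*}_{NC}(S)$. In alternatives (a) and (b) we are immediately done, as these are exactly alternatives (a) and (b) of the statement. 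In alternative (d), since $Rep^{*}_{NC}(S)\subseteq Rep_{NC}(S)$ (an existentially quantified single $S$-formula is trivially a conjunction of one such formula), we land in alternative (e). So the only real work is in alternative (c).

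In the remaining case, where $[x],[\neg x]\in Rep^{*}_{NC}(S)$, I would further split on whether every relation in $S$ is horn, whether every relation in $S$ is co-horn, or neither. If every relation is horn we are in alternative (c) of the statement, and if every relation is co-horn we are in alternative (d). The interesting sub-case is when $S$ contains a relation $R$ that is not horn and a relation $R'$ that is not co-horn (possibly $R=R'$). Here Lemma~\ref{lem:schaefer-3.2.1} applies directly and yields $[x\neq y]\in Rep(S)$. The only subtlety is that Lemma~\ref{lem:schaefer-3.2.1} gives membership in $Rep(S)$ (with constants) rather than $Rep_{NC}(S)$; but in the present case we also have $[x]$ and $[\neg x]$ in $Rep^{*}_{NC}(S)\subseteq Rep_{NC}(S)$, so the Boolean constants $\true$ and $\false$ occurring in the $Gen(S)$-representation of $[x\neq y]$ can be eliminated by substituting a variable forced to be true (respectively false) via an $S$-gadget for $[x]$ (respectively $[\neg x]$), giving $[x\neq y]\in Rep_{NC}(S)$ and hence alternative (e). This elimination-of-constants step is a routine composition of existentially quantified formulas, but it is the one place where care is needed that conjunction is genuinely available inside $Gen_{NC}$, which it is by definition. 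Throughout, the arithmetical bookkeeping — treating $[R]\in Rep_{NC}(S)$ as an abbreviation for an arithmetical statement with $R,S$ as parameters, as flagged in the remark after the definition of $Rep^{*}_{NC}(S)$ — must be respected, but causes no difficulty since every relation invoked has a fixed finite arity.

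The main obstacle I anticipate is purely organizational rather than mathematical: making sure the case division is exhaustive and that each branch is matched to the correct alternative of the conclusion, while correctly threading the distinction between $Rep$, $Rep_{NC}$, and $Rep^{*}_{NC}$ so that the final membership statement is the one with no constants. There is no genuine combinatorial content beyond what is already packaged in Lemmas~\ref{lem:schaefer-4.3} and~\ref{lem:schaefer-3.2.1}; the lemma is essentially a bookkeeping corollary of those two results plus the constant-elimination gadget.
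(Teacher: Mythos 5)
Your proposal is correct and follows essentially the same route as the paper: invoke Lemma~\ref{lem:schaefer-4.3} to dispose of the $\false$-valid, $\true$-valid, and $[x\neq y]$ alternatives, and in the remaining case use $[x],[\neg x]\in Rep_{NC}(S)$ to identify $Rep_{NC}(S)$ with $Rep(S)$ so that Lemma~\ref{lem:schaefer-3.2.1} applies. The paper phrases this as a proof by contradiction and compresses the constant-elimination step into the single remark that $Rep_{NC}(S)=Rep(S)$, but the content is identical to your gadget argument.
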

\begin{proof}
Assume none of cases (a), (b) and (e) holds.
Then by Lemma~\ref{lem:schaefer-4.3}, $[x]$ and $[\neg x]$ are contained in $Rep_{NC}(S)$,
hence $Rep_{NC}(S) = Rep(S)$. So by Lemma~\ref{lem:schaefer-3.2.1}, either every relation in $S$
is horn, or every relation in $S$ is co-horn. 
\end{proof}

It is easy to see that $\lrsatzerovalid$ and $\lrsatonevalid$ both hold over $\rca$.
We will now prove that so do $\lrsathorn$ and $\lrsatcohorn$, but first we must introduce the powerful tool
of \emph{closure under functions}.

\begin{definition}
We say that a relation $R \subseteq \Bool^n$ is \emph{closed} or \emph{invariant} under an $m$-ary function $f$
and that $f$ is a \emph{polymorphism} of $R$ if
for every $m$-tuple $\tuple{v_1, \dots, v_m}$ of vectors of $R$, $\vec{f}(v_1, \dots, v_m) \in R$
where $\vec{f}$ is the coordinate-wise application of the function $f$.
\end{definition}

We denote the set of all polymorphisms of $R$ by $\poly{R}$, and for a set $\Gamma$
of Boolean relations we define $\poly{\Gamma} = \set{ f : f \in \poly{R} \mbox{ for every } R \in \Gamma}$.
Similarly for a set $B$ of Boolean functions, $\inv{B} = \set{R : B \subseteq \poly{R}}$
is the set of \emph{invariants} of $B$.
One easily sees that the projection functions are polymorphism of every Boolean relation~$R$.
In particular, the identity function is a polymorphism of~$R$.
As well, the composition of polymorphisms of~$R$ form again a polymorphism of~$R$.
So given a set of Boolean relations~$S$, $\poly{S}$ contains all projection functions
and is closed under composition. The sets of functions satisfying those closure properties
have been studied in universal algebra under the name of~\emph{clones}.
We have seen that for every set of Boolean relations~$S$, $\poly{S}$ is a clone.
Post~\cite{post1942two} studied the lattice of clones of Boolean functions and proved that
they admit a finite basis.

The lattice structure of the Boolean clones has connections with the complexity
of satifiability problems. Indeed, if some clone~$A$ is a subset of another clone~$B$,
then~$\inv{A} \supseteq \inv{B}$. But then trivially~$\lrsats{\inv{A}} \imp \lrsats{\inv{B}}$.
As well, we shall see that as soon as~$[x = y] \in Rep_{NC}(S)$, the sets $Rep_{NC}(S)$ and $\inv{\poly{S}}$
coincide. Therefore, assuming that the equality relation is representable in~$S$, 
the study of the strength of~$\lrsats{S}$ can be reduced to the study of
the strength of~$\lrsats{\inv{A}}$ for every clone in Post's lattice.

\begin{definition}
The \textit{conjunction function} $\mathsf{conj} : \Bool^2 \to \Bool$ is defined by
$\mathsf{conj}(a, b) = a \wedge b$, the \textit{disjunction function} $\mathsf{disj} : \Bool^2 \to \Bool$ 
by $\mathsf{disj}(a, b) = a \vee b$, the \textit{affine function} $\mathsf{aff} : \Bool^3 \to \Bool$ by
$\mathsf{aff}(a, b, c) = a \oplus b \oplus c = \true$ and the \textit{majority function} 
$\mathsf{maj} : \Bool^3 \to \Bool$ by
$\mathsf{maj}(a, b, c) = (a \wedge b) \vee (a \wedge c) \vee (b \wedge c)$.
\end{definition}

The following theorem due to Schaefer characterizes
relations in terms of closure under some functions.
The proof is relativizable and involves finite objects. Hence
it can be easily proven to hold over $\rca$.

\begin{theorem}[Schaefer \cite{schaefer1978complexity}]\label{thm:schaefer-closure}
A relation is
\begin{enumerate}
  \item horn iff it is closed under conjunction function
  \item co-horn iff it is closed under disjunction function
  \item affine iff it is closed under affine function
  \item bijunctive iff it is closed under majority function
\end{enumerate}
\end{theorem}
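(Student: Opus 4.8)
The plan is to prove the four biconditionals one at a time, each split into an \emph{easy} direction (a relation defined by the relevant kind of clause is closed under the relevant function) and a \emph{hard} direction (a relation closed under the function is defined by an explicit conjunction of clauses of the relevant shape). Every object involved is finite --- a relation is $R \subseteq \Bool^n$ coded by a number, and the conjunctions we build are finite --- and the only inductions are over lengths or cardinalities bounded by $n$, so the whole argument relativizes and goes through in $\rca$ using $\Sigma^0_1$-induction; I will not dwell on this.

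For the easy directions, since the intersection of relations each closed under $f$ is closed under $f$, it suffices to check a single clause of each shape. For a horn clause $\bigvee_{i \in I}\neg x_i \vee x_j$ and satisfying assignments $u,v$: if $u \wedge v$ (taken coordinatewise) falsified it, then $u(i) = v(i) = 1$ for all $i \in I$ while $\min(u(j),v(j)) = 0$, so one of $u,v$ already falsifies the clause --- a contradiction; the case of a horn clause with no positive literal is even simpler. The co-horn case is the exact bit-complement dual ($R$ is horn iff $\br R = \set{\br r : r \in R}$ is co-horn, and $R$ is $\mathsf{conj}$-closed iff $\br R$ is $\mathsf{disj}$-closed, using $\overline{a \wedge b} = \br a \vee \br b$), so only horn need be written out. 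For an affine equation $\bigoplus_{i \in I} x_i = \epsilon$, associativity and commutativity of $\oplus$ give $\bigoplus_{i\in I}(u(i)\oplus v(i)\oplus w(i)) = \epsilon \oplus \epsilon \oplus \epsilon = \epsilon$. For a clause of width at most $2$ with satisfying $u,v,w$: $\mathsf{maj}(u,v,w)$ falsifies the first literal only if at least two of $u,v,w$ do, and likewise for the second literal, so since $2+2>3$ some one of $u,v,w$ falsifies both literals, hence the whole clause --- a contradiction.

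For the hard directions the common scheme is: given an $f$-closed $R \subseteq \Bool^n$, let $\varphi_R$ be the conjunction of \emph{all} clauses of the appropriate shape over $x_1,\dots,x_n$ satisfied by every tuple of $R$. Then $R \subseteq [\varphi_R]$ is immediate, and the work is to show $[\varphi_R] \subseteq R$, i.e.\ to separate each $a \notin R$ from $R$ by a clause of the shape. In the horn case put $P = \set{i : a(i)=1}$ and $B = \set{r \in R : r(i) = 1 \text{ for all } i \in P}$; one checks $B$ is $\mathsf{conj}$-closed, so if $B \neq \emptyset$ it has a least element $b \in R$, which differs from $a$ (as $a \notin R$), forcing $b(j) = 1 > a(j) = 0$ for some $j \notin P$, and then $\bigvee_{i\in P}\neg x_i \vee x_j$ is a horn clause true on all of $R$ and false at $a$; if $B = \emptyset$ then already $\bigvee_{i\in P}\neg x_i$ works (the degenerate cases $R = \emptyset$ and $P = \emptyset$ are handled directly). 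In the affine case, if $R \neq \emptyset$ fix $r_0 \in R$; then $\set{r \oplus r_0 : r \in R}$ contains $0$ and is closed under coordinatewise $\oplus$ --- closure under addition being exactly ternary $\mathsf{aff}$-closure of $R$ --- hence is an $\mathbb{F}_2$-subspace, hence the solution set of a homogeneous linear system, so $R$ is the solution set of the corresponding inhomogeneous system, i.e.\ a conjunction of affine equations.

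The main obstacle is the bijunctive hard direction, which needs the Helly-type ``$2$-decomposability'' fact: if $R$ is $\mathsf{maj}$-closed and $a$ satisfies every clause of width at most $2$ implied by $R$, then $a \in R$. First, satisfying all implied $1$-clauses yields, for each coordinate $i$, some $r \in R$ with $r(i) = a(i)$, and satisfying all implied $2$-clauses yields, for each pair $\set{i,j}$, some $r \in R$ with $r(i) = a(i)$ and $r(j) = a(j)$ (otherwise a width-$\le 2$ clause forbidding that pattern would be implied by $R$ yet falsified by $a$). Then one proves by induction on $|K|$ that for every $K \subseteq \set{1,\dots,n}$ there is $r \in R$ with $r \uh K = a \uh K$: for $|K| \ge 3$, pick three coordinates $i_1,i_2,i_3 \in K$, obtain by induction witnesses $r_t \in R$ agreeing with $a$ on $K \setminus \set{i_t}$ for $t=1,2,3$, and check that $\mathsf{maj}(r_1,r_2,r_3) \in R$ agrees with $a$ on all of $K$ (on $i_t$ because the other two witnesses agree with $a$ there, and elsewhere because all three do). Taking $K = \set{1,\dots,n}$ gives $a \in R$. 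Combining the four biconditionals proves the theorem, and --- as noted above --- the finitary, bounded-induction character of every step makes the whole argument available in $\rca$.
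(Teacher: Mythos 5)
Your proof is correct: the easy directions, the horn separation via the minimum element of the $\mathsf{conj}$-closed set $B$, the $\mathbb{F}_2$-subspace argument for affine, and the $2$-decomposability induction for $\mathsf{maj}$-closed relations are all sound, and every step is finitary and bounded, so it formalizes in $\rca$ as claimed. The paper itself gives no proof of this theorem --- it cites Schaefer and remarks only that the argument is relativizable and involves finite objects --- and your write-up is precisely the standard argument being delegated to that citation, so there is nothing to contrast.
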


In other words, using Post's lattice, a relation $R$ is horn iff $E_2 \subseteq \poly{R}$,
co-horn iff $V_2 \subseteq \poly{R}$, affine iff $L_2 \subseteq \poly{R}$ and bijunctive
iff $D_2 \subseteq \poly{R}$.
In the case of horn and co-horn relations, we will use the closure of the valid assignments
under the conjunction and disjunction functions to prove that~$\lrsathorn$
and~$\lrsatcohorn$ both hold over~$\rca$.
%

\begin{theorem}\label{thm:rca-proves-lrsathorn}
If every relation in $S$ is horn (resp. co-horn) then $\rca \vdash \lrsats{S}$.
\end{theorem}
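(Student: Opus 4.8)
The plan is to prove the "horn" case; the "co-horn" case follows by the dual argument (swapping $\false$ and $\true$, $\mathsf{conj}$ and $\mathsf{disj}$), exactly as in the proofs of Lemma~\ref{lem:i-valid-trivial} and Theorem~\ref{thm:inf-schaefer-dichotomy}. So fix an instance $(V, C, L)$ of $\lrsats{S}$ with every relation in $S$ horn, and let $L = \set{a_0 < a_1 < \dots}$ enumerate $L$. The key structural fact I would exploit is Theorem~\ref{thm:schaefer-closure}(1): each relation in $S$ is closed under the conjunction function $\mathsf{conj}$, hence the set $\opass{\varphi}$ of satisfying assignments of any conjunction $\varphi$ of $S$-formulas is closed under coordinatewise $\wedge$. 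Consequently such a $\varphi$, if satisfiable, has a \emph{least} satisfying assignment $\mu_\varphi$ (the coordinatewise $\wedge$ of all satisfying assignments on the finitely many variables of $\varphi$), and $\mu_\varphi(x) = \true$ iff \emph{every} satisfying assignment of $\varphi$ sets $x$ to $\true$.

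The strategy is then to take $c = \false$ and build an infinite $H \subseteq L$ on which all the $\mu$'s agree with $\false$. Define a $\Sigma^0_1$ predicate $G(a) \equiv$ "there is a finite $C_0 \subseteq C$ such that for every satisfying assignment $\nu$ of $\bigwedge C_0$ one has $\nu(a) = \true$" — equivalently, $a$ is forced to be $\true$ by some finite subconjunction. Let $H = \set{a \in L : \neg G(a)}$; I would like $H$ to be an infinite homogeneous set for colour $c = \false$. First, $H$ is infinite: the set $\set{a \in L : G(a)}$ is the union, over finite $C_0 \subseteq C$, of the set of variables forced $\true$ by $\bigwedge C_0$; but each finite $C_0$ is satisfiable (as $C$ is finitely satisfiable), so has a least satisfying assignment $\mu_{C_0}$, and the variables forced $\true$ by $\bigwedge C_0$ are exactly $\mu_{C_0}^{-1}(\true)$, a finite set contained in $\opvars{C_0}$ — so no single $C_0$ can force infinitely many elements of $L$ to $\true$ all at once. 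However this alone does not make $\set{a : G(a)}$ finite, since different $C_0$'s can force different variables. This is the point that needs care, and I expect it to be the main obstacle.

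To get around it, I would not index $H$ by a single $\Sigma^0_1$ set but build it by stages, which is the standard move here and matches how the paper formalizes such arguments. Enumerate finite subsets $C_0^0 \subseteq C_1^0 \subseteq \dots$ of $C$ with $\bigcup_s C_s^0 = C$. At stage $s$ we have a finite set $H_s \subseteq L$ already committed to $H$; compute the least satisfying assignment $\mu_s := \mu_{\bigwedge C_s^0}$ (a finite object, obtained by checking all assignments on $\opvars{C_s^0}$ and taking the coordinatewise $\wedge$ of the satisfying ones — this is $\Delta^0_1$), and let the next element of $H$ be the least $a \in L$ with $a \notin H_s$, $a \notin \opvars{C_s^0}$ (so $\mu_s(a)$ is "vacuously" $\false$ on $a$ in the sense that $a$ is unconstrained so far), actually more carefully: the least $a \in L \setminus H_s$ that is not forced $\true$ by any of $C_0^0, \dots, C_s^0$. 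Such $a$ exists because each $\mu_t$ forces only finitely many elements to $\true$ and $L$ is infinite. Then $H := \bigcup_s H_s$ is infinite and $\Delta^0_1$ (the construction is computable, so $H$ exists by $\Delta^0_1$-comprehension with $\Sigma^0_1$-induction), and for any finite $C_0 \subseteq C$, choosing $s$ with $C_0 \subseteq C_s^0$, the assignment $\mu_s$ satisfies $C_0$ and, by the selection rule, $\mu_s(a) = \false$ for every $a \in H$ that was chosen at a stage $\geq s$; but $C_0$ only involves finitely many variables, so I can instead run the argument "in the limit" — more cleanly, fix finite $C_0 \subseteq C$, pick $s$ large enough that $C_0 \subseteq C_s^0$ and that $H \cap \opvars{C_0} \subseteq H_s$; then every $a \in H \cap \opvars{C_0}$ was selected at some stage $< s$ hence is not forced $\true$ by $\bigwedge C_s^0 \supseteq C_0$, so the least satisfying assignment of $\bigwedge C_0$ — no wait, I need the assignment that is $\false$ on $H$. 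Here I use that $\opass{\bigwedge C_0}$ is closed under $\wedge$: take $\mu_{C_0}$ the least satisfying assignment of $\bigwedge C_0$; for $a \in H \cap \opvars{C_0}$, since $a$ is not forced $\true$ by $\bigwedge C_s^0$ there is a satisfying assignment of $\bigwedge C_s^0$ (a fortiori of $\bigwedge C_0$) sending $a$ to $\false$, and taking the coordinatewise $\wedge$ over all these (one per such $a$, finitely many) together with $\mu_{C_0}$ gives a satisfying assignment $\nu$ of $\bigwedge C_0$ with $\nu(a) = \false$ for all $a \in H \cap \opvars{C_0}$, and $\nu(a) = \false$ trivially for $a \in H \setminus \opvars{C_0}$. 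Hence $\nu$ witnesses homogeneity of $H$ for $c = \false$ on $C_0$, and since $C_0$ was arbitrary, $H$ is homogeneous for $C$. The whole construction uses only $\Delta^0_1$-comprehension and $\Sigma^0_1$-induction, so it goes through in $\rca$; the co-horn case is symmetric with $c = \true$ and greatest satisfying assignments. The main obstacle, as flagged, is organizing the stage construction so that "not forced $\true$ by a fixed finite piece of $C$" upgrades to "can be simultaneously set to $\false$ across all of $H \cap \opvars{C_0}$", which is precisely where closure under $\mathsf{conj}$ does the work.
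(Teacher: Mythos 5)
There is a genuine gap, and it is exactly at the point you flagged. Your overall strategy is to always produce a homogeneous set with colour $\false$, but this is impossible: take $C = \set{(x_i) : i \in \Nb}$, a set of Horn unit clauses each forcing its variable to $\true$. Then no nonempty subset of $V$ is homogeneous with colour $\false$, so any correct proof must sometimes output a $\true$-homogeneous set. Your stage construction does not evade this. An element $a$ admitted to $H$ at stage $t$ is guaranteed only not to be forced $\true$ by $C^0_t$; since forcing is monotone in the finite piece, $a$ may well be forced $\true$ by $C^0_s$ for some $s > t$ (in the example above, every variable eventually is). The inference in your verification, ``every $a \in H \cap \opvars{C_0}$ was selected at some stage $< s$, hence is not forced $\true$ by $\bigwedge C^0_s$,'' is therefore false, and with it the claim that a satisfying assignment of $\bigwedge C^0_s$ sends $a$ to $\false$. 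The only selection rule that would make the verification go through is ``$a$ is not forced $\true$ by \emph{any} finite subset of $C$,'' but that is the complement of your $\Sigma^0_1$ predicate $G$ (so not computably enumerable) and, as you yourself observed, may pick out only finitely many elements of $L$.

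The missing idea is a case split on the $\Sigma^0_1$ set $F = \set{a \in L : G(a)}$ of variables forced $\true$ by some finite piece of $C$, which is how the paper proceeds. If $F$ is infinite, thin it to an infinite $\Delta^0_1$ subset $H \subseteq F$; this $H$ is homogeneous with colour $\true$, since for any finite $C_0 \subseteq C$ one can adjoin the finitely many witnessing pieces for the elements of $H \cap \opvars{C_0}$ and use finite satisfiability. If $F$ is finite, then $H = L \setminus F$ is infinite, and it is homogeneous with colour $\false$ by precisely the coordinatewise-conjunction argument you already give correctly in your last paragraph: each $a \in H \cap \opvars{C_0}$ admits a satisfying assignment of $\bigwedge C_0$ sending it to $\false$ (because $a \notin F$), and the meet of these finitely many assignments is again satisfying by Horn-closure under $\mathsf{conj}$. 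So your key tool and your finite verification are right; what is wrong is the insistence on colour $\false$ and the stage construction built to support it.
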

\begin{proof}
We will prove it over $\rca$ for the horn case. The proof for co-horn relations is similar.
Let $(V, C, L)$ be an instance of $\lrsathorn$
and $F \subseteq L$ be the collection of variables $x \in L$ such that there exists a finite set $C_{fin} \subseteq C$ 
for which every valid assignment $\nu$ satisfies $\nu(x) = \true$.

Case 1: $F$ is infinite.
Because $F$ is $\Sigma^0_1$, we can
find an infinite $\Delta^0_1$ subset $H$ of $F$.
The set $H$ is homogeneous for~$C$ with color~$\true$.
  
Case 2: $F$ is finite.
We take $H = L \setminus F$ and claim that $H$ is homogeneous for~$C$ with color~$\false$.
If $H$ is not homogeneous for $C$, then there exists a finite set
$C_{fin} \subseteq C$ witnessing it. Let $H_{fin}  = \opvars{C_{fin}} \cap H$.
By definition of not being homogeneous with color~$\false$, for every assignment $\nu$ satisfying $C_{fin}$,
there exists a variable $x \in H_{fin}$ such that $\nu(x) = \true$.
By definition of~$H$, for each variable $x \in H$ there exists a valid assignment $\nu_x$ such that
$\nu_x(x) = \false$. By Theorem~\ref{thm:schaefer-closure}, the class valid assignments 
of a finite horn formula is closed under conjunction.
So $\nu = \bigwedge_{x \in H_{fin}} \nu_x$ is a valid assignment for $C_{fin}$ such that
$\nu(x) = \false$ for each $x \in H_{fin}$. Contradiction. 
\end{proof}

\begin{proof}[Proof of Theorem~\ref{thm:schaefer-ramsey-rca-rcolor}]
If every relation in $S$ is $\false$-valid (resp. $\true$-valid) then $\lrsats{S}$
holds obviously over $\rca$. If every relation in $S$ is horn (resp. co-horn)
then by Theorem~\ref{thm:rca-proves-lrsathorn}, $\lrsats{S}$ holds also over $\rca$.
By Lemma~\ref{lem:case-neq-or-simple}, the only remaining case is where $[x \neq y] \in Rep_{NC}(S)$.
There exists a finite (hence c.e.) subset $T \subseteq S$ such that $[x \neq y] \in Rep_{NC}(T)$.
By Lemma~\ref{lem:rep-lrsats}, $\rca \vdash \lrsats{T} \imp \lrsats{[x \neq y]}$,
hence $\rca \vdash \lrsats{S} \imp \lrsats{[x \neq y]}$. 
\end{proof}

The following technical lemma will be very useful for the remainder of the paper.

\begin{lemma}\label{lem:fix-color-lrsats}
$\rca$ proves the following: Suppose $T$ is a sequence of Boolean relations such that 
\begin{itemize}
	\item[1.] $T$ contains a relation which is not $\false$-valid
	\item[2.] $T$ contains a relation which is not $\true$-valid
	\item[3.] $[x \neq y] \in Rep_{NC}(T)$
\end{itemize}
If $S$ is a sequence such that $S \subseteq Rep_{NC}(T \cup \set{[x], [\neg x]})$
then $\lrsats{T} \imp \lrsats{S}$.
\end{lemma}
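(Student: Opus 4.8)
The plan is to imitate the translation from the proof of Lemma~\ref{lem:rep-lrsats}, upgrading it to cope with the two extra relations $[x]$ and $[\neg x]$ that are now allowed to occur in the $Gen_{NC}$-representations of the elements of $S$. First, since $[x \neq y] \in Rep_{NC}(T)$ there is a finite (hence c.e.) subset $T_0 \subseteq T$ with $[x \neq y] \in Rep_{NC}(T_0)$, and by Lemma~\ref{lem:rep-lrsats} applied with $T_0$ we may freely replace $T$ by $T \cup \set{[x \neq y]}$. So assume $[x \neq y] \in T$ outright; in particular the abbreviation $x = y$ for $(\exists z)(x \neq z \wedge z \neq y)$ is a $Gen_{NC}(T)$-formula, as in Lemma~\ref{lem:rep-lrsats}.

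Given an instance $(V, C, L)$ of $\lrsats{S}$ with $C = \set{\varphi_0, \varphi_1, \dots}$, I would build an instance $(V \cup F \cup \set{c_0, c_1}, D, L)$ of $\lrsats{T}$, where $F$ is a set of fresh auxiliary variables and $c_0, c_1$ are two further distinguished fresh variables, constructed by stages exactly as in Lemma~\ref{lem:rep-lrsats} (so that $D$ is $\Delta^0_1$ and no formula over $\set{x_i, y_i : i < s}$ is added after stage $s$), with one change: when the $Gen_{NC}(T \cup \set{[x],[\neg x]})$-representation of $\varphi_s$ contains a unary atom forcing a variable $w$ to $\true$ (resp. to $\false$), translate it to the constraint $w = c_1$ (resp. $w = c_0$). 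Additionally put the single constraint $c_0 \neq c_1$ into $D$. As in Lemma~\ref{lem:rep-lrsats}, $D$ is finitely satisfiable, since any satisfying assignment of a finite $C_0 \subseteq C$ extends to the corresponding $D_0$ by setting $c_0 := \false$, $c_1 := \true$. Any solution $H \subseteq L$ to this instance avoids $c_0$ and $c_1$, hence is a candidate homogeneous set for $C$; the one thing that can go wrong is the \emph{orientation} of the pair $(c_0, c_1)$: a witnessing assignment $\nu$ of a finite $D_0$ satisfies $\nu(c_0) \neq \nu(c_1)$, but not necessarily $\nu(c_0) = \false$, and in the flipped orientation $\nu \restriction V$ fails $C_0$ because each translated constant is negated.

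The heart of the argument is to control this orientation, and this is where hypotheses (1) and (2) are used. I would split into two cases. In the first, $T$ admits (from a relation that is not $\false$-valid, via the gadget that identifies all $\true$-coordinates of a minimal-support satisfying vector with one variable and all $\false$-coordinates with another — precisely the ``$R_1$'' construction inside the proof of Theorem~\ref{thm:inf-schaefer-dichotomy}, including its ``$\false$-valid but not $\false$-default'' sub-case) a $Gen_{NC}(T)$-formula pinning a variable to $\false$, and symmetrically, from a relation not $\true$-valid, one pinning a variable to $\true$. Adding such a gadget on $c_0$ (or $c_1$) to $D$ forces the orientation, the translation becomes faithful, and $H$ is homogeneous for $C$ with the colour it has for $D$. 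In the remaining case no such pinning gadget exists, which forces every relation of $T$ to be either both-valid or neither-valid, with the neither-valid ones closed under complementation; one checks that $D$ may then be taken closed under complementation (the both-valid relations being satisfied by the constant assignments), so that $H$ is simultaneously homogeneous for $D$ with colour $\true$ and with colour $\false$. Then, for each finite $C_0 \subseteq C$, one of a witnessing assignment of the corresponding $D_0$ and its complement simultaneously satisfies $D_0$, sends $c_0$ to $\false$, and keeps $H$ monochromatic; and one argues, using finite satisfiability of $C$, that the colour obtained on $H$ cannot depend on $C_0$ — two finite subsets of $C$ demanding opposite colours on a common variable of $H$ would be jointly unsatisfiable. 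In both cases $H$ is an infinite homogeneous subset of $L$ for $C$, and everything is plainly formalizable in $\rca$.

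The step I expect to be the main obstacle is exactly this orientation control, and within it the second (complementive) case: extracting from homogeneity of $H$ for $D$ a \emph{single} colour that works for all finite $C_0 \subseteq C$ at once — rather than a colour that drifts with $C_0$ — is the delicate point, and it is precisely there that finite satisfiability of $C$ must be invoked. A secondary technical nuisance is verifying that the pinning gadgets of the first case are genuinely available from hypotheses (1)–(2) even when the only non-$\false$-valid (resp. non-$\true$-valid) relation is itself a constant relation, which is handled by the same minimal-vector analysis used in the proof of Theorem~\ref{thm:inf-schaefer-dichotomy}.
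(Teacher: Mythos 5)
Your overall strategy is the paper's: simulate the constants $[x]$ and $[\neg x]$ by two fresh variables $c_0, c_1$ linked by the constraint $c_0 \neq c_1$, and resolve the resulting orientation ambiguity by a case split on whether the constants are themselves representable from $T$. Two remarks. First, your Case~1 can be shortcut: if both $[x]$ and $[\neg x]$ lie in $Rep_{NC}(T)$, then $Rep_{NC}(T \cup \set{[x], [\neg x]}) = Rep_{NC}(T)$, so $S \subseteq Rep_{NC}(T)$ and Lemma~\ref{lem:rep-lrsats} applies directly; no pinning gadgets on $c_0, c_1$ are needed (though your route through the gadgets of Theorem~\ref{thm:inf-schaefer-dichotomy} also works). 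Second, and more substantively, your description of Case~2 is weaker than what the argument needs: when one of $[x], [\neg x]$ is \emph{not} in $Rep_{NC}(T)$, Lemma~4.3 of~\cite{schaefer1978complexity} (under hypotheses (1)--(3)) gives that \emph{every} relation of $T$ is complementive, not merely the ``neither-valid'' ones. Your parenthetical justification that $D$ is closed under complementation ``because the both-valid relations are satisfied by the constant assignments'' is not a justification --- a both-valid relation need not be complementive, and if such a relation occurred in $T$ the step ``complement the witnessing assignment when $\nu(c_0) = \true$'' would genuinely fail on it. What saves the argument is precisely the full complementivity statement, which is what the paper invokes at this point. With that repair, your handling of the orientation coincides with the paper's; and your explicit argument that the colour obtained on $H$ cannot drift with the finite subset --- by considering the union of two offending finite subsets --- is in fact more careful than the paper, which asserts the transfer of homogeneity without comment. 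Just note that such a union is not ``jointly unsatisfiable'' but jointly not satisfiable monochromatically on $H$, which is what contradicts the homogeneity of $H$ for $D$.
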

\begin{proof}
We reason by case analysis.
Suppose that $[x]$ and $[\neg x]$ are both in $Rep_{NC}(T)$.
Then $S \subseteq Rep_{NC}(T)$, so by Lemma~\ref{lem:rep-lrsats}, $\rca \vdash \lrsats{T} \imp \lrsats{S}$.

Suppose now that either $[x]$ or $[\neg x]$ is not in $Rep_{NC}(T)$.
Then by Lemma~4.3 of~\cite{schaefer1978complexity}, every relation in $T$ is complementive,
that is, if $\vec{r} \in R$ for some $R \in T$, then the pointwise negation of $\vec{r}$ is also in~$R$.
By Lemma~\ref{lem:rep-lrsats}, it suffices to ensure that $\rca \vdash \lrsats{Rep_{NC}(T)} \imp \lrsats{T \cup \set{[x], [\neg x]}}$
to conclude, as $\rca \vdash \lrsats{T} \imp \lrsats{Rep_{NC}(T)}$.
Let $(V, C, L)$ be an instance of $\lrsats{T \cup \set{[x], [\neg x]}}$. 
Say $V = \set{x_0, x_1, \dots}$ and $C = \set{\varphi_0, \varphi_1, \dots}$.
Define an instance $(V \cup \set{c_0, c_1}, D, L)$ of $\lrsats{Rep_{NC}(T)}$ such that~$c_0, c_1 \not \in V$
and with the set of formulas
$$
D = \{ c_0 \neq c_1 \} \cup \{ R(\vec{x}) \in C : R \neq [x] \wedge R \neq [\neg x]\}
\cup \{ x = c_0 : (\neg x) \in C \} \cup \{ x = c_1 : (x) \in C\}
$$
Note that $[x = y] \in Rep_{NC}(T)$ as $[x = y] = [(\exists z)x \neq z \wedge z \neq y]$ and $[x \neq y] \in Rep_{NC}(T)$.
The instance $(V \cup \set{c_0, c_1}, D, L)$ is obviously finitely satisfiable
as every valid assignment $\nu$ of $(V, C, L)$ induces an assignment of $(V \cup \set{c_0, c_1}, D, L)$
by setting $\nu(c_0) = \false$ and $\nu(c_1) = \true$.
Conversely, we prove that for every assignment $\nu$ satisfying $(V \cup \set{c_0, c_1}, D, L)$,
the assignment $\mu$ defined to be $\nu$ if $\nu(c_0) = \false$ and the pointwise negation of $\nu$
if $\nu(c_0) = \true$ satisfies $(V, C, L)$.
Suppose there exists a finite subset $E \subset C$ such that $\mu(\bigwedge E) = \false$.
For every formula $(\neg x) \in E$, $\mu(x) = \mu(c_0) = \false$ and for every $(x) \in E$, $\mu(x) = \mu(c_1) = \true$.
So there must exist a relation $R \in T$ such that $R(\vec{x}) \in E$ and $\mu(R(\vec{x})) = \false$.
By complementation of $R$, $\nu(R(\vec{x})) = \false$, but $R(\vec{x}) \in D$, contradicting the assumption
that $\nu$ satisfies~$D$.
Therefore, every infinite set~$H \subseteq L$ homogeneous for~$D$ is homogeneous for~$C$.
\end{proof}

\subsection{Bijunctive satisfiability}

Our second dichotomy theorem concerns bijunctive relations.
Either the related principle is a consequence of $\lrsats{[x \neq y]}$
over $\rca$, or it has full strength of $\lrsatbijunctive$.
In the remainder of this subsection, we will make the following assumptions
and denote them by the shorthand in the right column of the table:
\begin{tabular}{rll}
	(i) & $S$ contains only bijunctive relations  & ($D_2 \subseteq \poly{S}$)\\
	(ii) & $S$ contains a relation which is not $\false$-valid  & ($I_0 \not \subseteq \poly{S}$)\\
	(iii) & $S$ contains a relation which is not $\true$-valid & ($I_1 \not \subseteq \poly{S}$)\\
	(iv) & $[x \neq y] \in Rep_{NC}(S)$ & ($\poly{S} \subseteq D$)\\
\end{tabular}

\begin{theorem}\label{thm:schaefer-bij-dichotomy}
If $S$ contains only affine relations then $\rca \vdash \lrsats{[x \neq y]} \imp \lrsats{S}$.
Otherwise $\rca \vdash \lrsats{S} \biimp \lrsatbijunctive$.
\end{theorem}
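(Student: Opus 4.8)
The plan is to split into the two cases of the dichotomy, using the closure characterisation of Theorem~\ref{thm:schaefer-closure} and Post's lattice as the organising principle. In the first case, suppose every relation in $S$ is affine. Since $S$ contains only bijunctive relations that are also affine, every $R \in S$ is closed under both $\mathsf{maj}$ and $\mathsf{aff}$; by Theorem~\ref{thm:schaefer-closure} this means $\poly{S}$ contains the clone generated by $\mathsf{maj}$ and $\mathsf{aff}$ together with the projections. One checks (a finite verification on Post's lattice) that this clone forces every $R \in S$ to lie in $Rep_{NC}(\{[x \neq y]\})$ — concretely, a relation closed under $\mathsf{maj}$ and $\mathsf{aff}$ is a conjunction of clauses of the form $x = y$, $x \neq y$, $x = i$, and, using assumptions (ii)--(iv) plus Lemma~\ref{lem:schaefer-4.3}, we may reduce to the constant-free case. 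Then $\rca \vdash \lrsats{[x \neq y]} \imp \lrsats{S}$ follows from Lemma~\ref{lem:rep-lrsats} (taking $T = \{[x\neq y]\}$), together with Lemma~\ref{lem:fix-color-lrsats} to absorb the constants $[x],[\neg x]$ using (ii)--(iv).

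For the second case, assume $S$ contains a bijunctive relation $R_0$ that is not affine, i.e.\ $\mathsf{aff} \notin \poly{R_0}$ while $\mathsf{maj} \in \poly{R_0}$. The direction $\rca \vdash \lrsatbijunctive \imp \lrsats{S}$ is immediate since $S$ is a set of bijunctive relations, so $S$ is among all bijunctive relations and the implication is monotone in the class (as noted in the remark after Theorem~\ref{thm:schaefer-closure}). The substance is the reverse: $\rca \vdash \lrsats{S} \imp \lrsatbijunctive$. Here I would argue that under (i)--(iv), the clone $\poly{S}$ sits strictly below $D$ and below $D_1$ in Post's lattice, and in fact $\poly{S} = D_2$. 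Indeed $D_2 \subseteq \poly{S}$ by (i); conversely (ii) and (iii) kill $I_0,I_1$, (iv) forces $\poly{S}\subseteq D$, and non-affineness of $R_0$ kills $L_2$ (and everything above it in $\poly{S}$), so by inspection of Post's lattice the only clone containing $D_2$ and satisfying these constraints is $D_2$ itself. Therefore $\inv{\poly{S}} = \inv{D_2}$ is exactly the class of bijunctive relations by Theorem~\ref{thm:schaefer-closure}. Since $[x = y] \in Rep_{NC}(S)$ (from $[x \neq y] \in Rep_{NC}(S)$, composing two copies), the identity $Rep_{NC}(S) = \inv{\poly{S}}$ holds over $\rca$ — this is the Galois-connection fact mentioned in the paper — so every bijunctive relation lies in $Rep_{NC}(S)$. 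Now Lemma~\ref{lem:rep-lrsats} with $T = S$ gives $\rca \vdash \lrsats{S} \imp \lrsats{R}$ for each bijunctive $R$; a uniform version across all bijunctive $R$ (which is legitimate because the reductions are effective and $S$ is fixed) yields $\rca \vdash \lrsats{S} \imp \lrsatbijunctive$, possibly after passing through a finite $T \subseteq S$ witnessing both $[x\neq y] \in Rep_{NC}(T)$ and the non-affine relation, exactly as in the proof of Theorem~\ref{thm:schaefer-ramsey-rca-rcolor}.

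The main obstacle I anticipate is the Galois-connection step: verifying over $\rca$ that $Rep_{NC}(S) = \inv{\poly{S}}$ whenever $[x=y] \in Rep_{NC}(S)$. Classically this is the Bodnarchuk--Geiger--Jeavons--Krasner theorem, whose usual proof builds a defining primitive-positive formula for a relation $R \in \inv{\poly{S}}$ out of the table of $R$ and is entirely finitary for each fixed $R$ — so it should relativise and formalise in $\rca$, but one must be careful that the resulting $Gen_{NC}(S)$-formula is produced effectively and that no appeal to the (possibly non-existent) set $Rep_{NC}(S)$ is made, only to individual membership statements, in the spirit of the remark following the definition of $Rep^{*}_{NC}$. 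The secondary obstacle is the case-by-case walk through Post's lattice to pin down $\poly{S}$ from (i)--(iv) plus (non-)affineness; this is routine but must be stated carefully since the paper works with possibly infinite $S$ and with $\rca$-provability rather than a single model.
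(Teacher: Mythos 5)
Your proposal is correct and follows essentially the same route as the paper: both arguments pin down $\poly{S}$ via Post's lattice (either $D_1 \subseteq \poly{S}$, forcing $S \subseteq \inv{D_1} = Rep_{NC}(\set{[x], [x \neq y]})$ and an application of Lemma~\ref{lem:fix-color-lrsats}, or $\poly{S} = D_2$, whence $Rep_{NC}(S) = \coclone{S} = \inv{D_2}$ is all bijunctive relations and one concludes via a finite basis together with Lemma~\ref{lem:rep-lrsats}). The Galois-connection step you flag as the main obstacle is exactly what the paper delegates to its ``Folklore'' lemma $\inv{\poly{S}} = \coclone{S}$ combined with the observation that $Rep_{NC}(S) = \coclone{S}$ once $[x = y] \in Rep_{NC}(S)$.
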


The proof of Theorem~\ref{thm:schaefer-bij-dichotomy} follows Lemma~\ref{lem:bijunctive-cases}.

\begin{definition}
For any set $S$ of relations, the \emph{co-clone} of $S$ is the closure
of $S$ by existential quantification, equality and conjunction. We denote it by $\coclone{S}$.
\end{definition}

Remark that in general, $Rep_{NC}(S)$ may be different from $\coclone{S}$
if $[x = y] \not \in Rep_{NC}(S)$. However in our case, we assume that $[x \neq y] \in Rep_{NC}(S)$,
hence $[x = y] \in Rep_{NC}(S)$ and $Rep_{NC}(S) = \coclone{S}$.
The following property will happen to be very useful for proving that a relation $R \in Rep_{NC}(S)$.

\begin{lemma}[Folklore]
$\inv{\poly{S}} = \coclone{S}$
\end{lemma}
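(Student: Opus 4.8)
The plan is to treat this as the \textbf{Pol--Inv} half of the Galois connection between clones and co-clones (the Bodnarchuk--Kalužnin--Kotov--Romov/Geiger theorem), proving the two inclusions separately. The inclusion $\coclone{S} \subseteq \inv{\poly{S}}$ is the routine direction: I would check that the class $\inv{\poly{S}}$ contains $S$, contains the equality relation (which is invariant under every function), and is closed under conjunction and existential quantification --- that is, closed under primitive positive definability. Each of these closure properties is immediate by applying the definition of a polymorphism coordinatewise (for $\exists$, one just picks witnesses for each of the $m$ input tuples and notes that $f$ maps the tuple of witnesses to a witness). Since $\coclone{S}$ is by definition the least class containing $S$ and closed under existential quantification, equality and conjunction, it is contained in $\inv{\poly{S}}$.

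The substantial inclusion is $\inv{\poly{S}} \subseteq \coclone{S}$. Fix $R \subseteq \Bool^n$ invariant under every polymorphism of $S$; I may assume $R \neq \emptyset$, the empty relation being a degenerate case handled directly. List the tuples of $R$ as $r^{(1)}, \dots, r^{(m)}$ and read off the $n$ column vectors $c_1, \dots, c_n \in \Bool^m$, where $c_j(i)$ is the $j$-th coordinate of $r^{(i)}$. The key claim is that
\[
R = \set{ \tuple{f(c_1), \dots, f(c_n)} : f \text{ is an } m\text{-ary polymorphism of } S }.
\]
The inclusion $\supseteq$ holds because applying an $m$-ary $f \in \poly{S}$ coordinatewise to $r^{(1)}, \dots, r^{(m)} \in R$ lands in $R$ (as $R \in \inv{\poly{S}}$), and the resulting tuple has $j$-th coordinate $f(c_j)$. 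The inclusion $\subseteq$ holds because $r^{(i)} = \tuple{\pi_i(c_1), \dots, \pi_i(c_n)}$ for the $i$-th $m$-ary projection $\pi_i$, and projections are polymorphisms of every Boolean relation.

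It then remains to exhibit the set $P_m$ of $m$-ary polymorphisms of $S$ as a relation in $\coclone{S}$ and to convert the displayed description of $R$ into a primitive positive formula. I would identify an $m$-ary function $\Bool^m \to \Bool$ with the tuple of its $2^m$ values indexed by the enumeration $e_1, \dots, e_{2^m}$ of $\Bool^m$, so $P_m \subseteq \Bool^{2^m}$. For a relation $T \in S$ of arity $k$, the value-tuple $\tuple{z_1, \dots, z_{2^m}}$ codes a polymorphism of $T$ exactly when, for each of the finitely many $m$-tuples $\tuple{v_1, \dots, v_m}$ of rows of $T$, one has $T(z_{q(1)}, \dots, z_{q(k)})$, where $q(\ell)$ is the index with $e_{q(\ell)} = \tuple{v_1(\ell), \dots, v_m(\ell)}$; this is a finite conjunction of $T$-atoms on the variables $z_1, \dots, z_{2^m}$. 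Intersecting over all $T \in S$ and using that there are only finitely many relations of arity $2^m$ over $\Bool$, so only finitely many distinct such constraints occur, $P_m$ is defined by a finite conjunction of $S$-atoms, hence $P_m \in \coclone{S}$. Finally $R(a_1, \dots, a_n)$ is equivalent to $\exists z_1 \cdots \exists z_{2^m}\,[\,P_m(z_1,\dots,z_{2^m}) \wedge \bigwedge_{j=1}^n (a_j = z_{\iota(j)})\,]$, where $\iota(j)$ is the index with $e_{\iota(j)} = c_j$; this uses only existential quantification, conjunction and equality, so $R \in \coclone{S}$.

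The main obstacle is this hard inclusion, and within it the bookkeeping: the ``columns of the tuple-matrix'' encoding and the ``value-tuple of an $m$-ary function'' encoding must be set up and matched consistently, and --- for a formalization over $\rca$ --- the reduction from a possibly infinite $S$ to finitely many relevant $S$-atoms must be made explicit. That reduction is legitimate precisely because $\Bool$ has only two elements, so every auxiliary relation has bounded arity and there are only finitely many relations of each arity; the remaining content is finite combinatorics over the two-element domain and presents no difficulty.
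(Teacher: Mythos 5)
The paper states this lemma as folklore and gives no proof at all, so there is nothing to compare your argument against; what you have written is the standard proof of the Pol--Inv Galois connection over a finite domain (Geiger; Bodnarchuk--Kalu\v{z}nin--Kotov--Romov), and it is essentially correct. Both directions are sound: the closure of $\inv{\poly{S}}$ under primitive positive definability, the identity $R = \set{\tuple{f(c_1),\dots,f(c_n)} : f \in \poly{S} \text{ $m$-ary}}$ via invariance in one direction and projections in the other, and the pp-definition of the graph $P_m$ of the $m$-ary polymorphisms. Your finiteness argument for infinite $S$ is phrased a little obliquely (the atoms $T(z_{q(1)},\dots,z_{q(k)})$ themselves range over unboundedly many arities; what is finite is the set of subsets of $\Bool^{2^m}$ they can carve out, so one picks one representative atom per distinct constraint-as-relation), but that is exactly the right idea and it works. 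The one genuine loose end is the empty relation: it lies in $\inv{\poly{S}}$ vacuously, but a conjunction of $S$-atoms and equalities under existential quantification need not define it (e.g.\ if every relation in $S$ is non-empty and $\true$-valid, every pp-formula is satisfiable), so "handled directly" is not quite honest --- the statement holds only under the usual convention of restricting attention to non-empty relations, which is harmless for every use the paper makes of the lemma but should be said explicitly.
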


\begin{lemma}\label{lem:bijunctive-cases}
One of the following holds:
\begin{itemize}
  \item[(a)] $Rep_{NC}(S)$ contains all bijunctive relations. 
  \item[(b)] $S \subseteq Rep_{NC}(\set{[x], [x \neq y]})$.
\end{itemize}
\end{lemma}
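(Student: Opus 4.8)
The plan is to read the whole statement through the Galois correspondence between relations and polymorphisms, exploiting the standing assumptions of this subsection. Assumption~(i) is exactly $D_2 \subseteq \poly{S}$. For the other side, observe that an $m$-ary Boolean function $f$ preserves $[x \neq y]$ precisely when $f(\overline{a_1}, \dots, \overline{a_m}) = \overline{f(a_1, \dots, a_m)}$ for all $a_1, \dots, a_m$, i.e. precisely when $f$ is self-dual; hence $\poly{[x \neq y]} = D$. Assumption~(iv) says $[x \neq y] \in Rep_{NC}(S)$, and since $[x = y] \in Rep_{NC}(S)$ as noted before the lemma, the Folklore lemma gives $Rep_{NC}(S) = \coclone{S} = \inv{\poly{S}}$; so $[x \neq y] \in \inv{\poly{S}}$, which is to say $\poly{S} \subseteq \poly{[x \neq y]} = D$. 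Thus the lemma reduces to a purely clone-theoretic question: which clones $\Acal$ satisfy $D_2 \subseteq \Acal \subseteq D$, and what does each possibility say about $S$?

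The structural input I would invoke from Post's lattice is that there are exactly three such clones, forming a chain $D_2, D_1, D$, where $D_1$ is the clone of $\true$-preserving self-dual functions (equivalently, the idempotent self-dual functions). If $\poly{S} = D_2$, then $Rep_{NC}(S) = \inv{D_2}$, which by Theorem~\ref{thm:schaefer-closure}(4) and the remark following it ($R$ is bijunctive iff $D_2 \subseteq \poly{R}$) is exactly the class of bijunctive relations; this is conclusion~(a). If $\poly{S} \neq D_2$, then $D_1 \subseteq \poly{S}$, so $\inv{\poly{S}} \subseteq \inv{D_1}$ by antitonicity of $\inv{\cdot}$. Now $\poly{\set{[x], [x \neq y]}} = \poly{[x]} \cap \poly{[x \neq y]}$ is the intersection of the clone of $\true$-preserving functions with $D$; it contains the function $\mathsf{aff}$ (which is self-dual and $\true$-preserving) but not the negation function, so it lies strictly between $D_2$ and $D$ and therefore, by the three-chain fact, equals $D_1$. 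Since $[x = y] \in Rep_{NC}(\set{[x], [x \neq y]})$, the Folklore lemma gives $Rep_{NC}(\set{[x], [x \neq y]}) = \inv{D_1}$, whence $S \subseteq Rep_{NC}(S) = \inv{\poly{S}} \subseteq \inv{D_1} = Rep_{NC}(\set{[x], [x \neq y]})$; this is conclusion~(b).

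The one genuine ingredient is the Post-lattice fact that the interval between $D_2$ and $D$ is this short chain; once it is granted, everything else is routine manipulation of the correspondence already set up in the excerpt (antitonicity of $\inv{\cdot}$, the Folklore lemma, and Schaefer's closure characterization). All the clone computations involved concern only finitely many finite functions and relations, and $Rep_{NC}(S)$ is used purely as shorthand for an arithmetical statement with $S$ and the relevant fixed relations as parameters, so the argument formalizes over $\rca$ with no additional effort.
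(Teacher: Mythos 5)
Your proof is correct and takes essentially the same route as the paper's: both reduce the lemma to the Post-lattice fact that the only clones in the interval $[D_2, D]$ are $D_2$, $D_1$ and $D$, then translate the two cases $\poly{S} = D_2$ and $D_1 \subseteq \poly{S}$ back through $Rep_{NC}(S) = \coclone{S} = \inv{\poly{S}}$. The only difference is one of detail: the paper simply asserts $\inv{D_1} = Rep_{NC}(\set{[x], [x \neq y]})$, whereas you justify it by computing $\poly{\set{[x],[x\neq y]}} = D \cap \poly{[x]} = D_1$, which is a harmless (indeed helpful) elaboration.
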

\begin{proof}
By the blanket assumption of the subsection, $D_2 \subseteq \poly{S} \subseteq D$. 
Either $D_1 \subseteq \poly{S}$ or $\poly{S} = D_2$.
If $D_1 \subseteq \poly{S}$, then every relation in $S$ is affine, so $S \subseteq \inv{D_1} = Rep_{NC}(\set{[x], [x \neq y]})$. 
If $\poly{S} = D_2$ then $Rep_{NC}(S) = \coclone{S} = \inv{\poly{S}} = \inv{D_2}$ 
which is the set of all bijunctive relations.
\end{proof}

\begin{proof}[Proof of Theorem~\ref{thm:schaefer-bij-dichotomy}]
By Lemma~\ref{lem:bijunctive-cases}, either $Rep_{NC}(S)$ contains all bijunctive relations
or $S \subseteq Rep_{NC}(\set{[x], [x \neq y]})$.
In the latter case, by Lemma~\ref{lem:fix-color-lrsats} $\lrsats{[x \neq y]}$ implies $\lrsats{S}$
over $\rca$. In the former case, there exists a finite basis $S_0 \subseteq S$
such that $Rep_{NC}(S_0)$ contains all bijunctive relations.
In particular $S_0$ is a c.e. set, so $\rca \vdash \lrsats{S_0} \imp \lrsatbijunctive$.
Any instance of $\lrsats{S_0}$ being an instance of $\lrsats{S}$,
$\rca \vdash \lrsats{S} \imp \lrsatbijunctive$. The reverse implication follows directly
from the assumption that every relation in $S$ is bijunctive.
So $\rca \vdash \lrsats{S} \biimp \lrsatbijunctive$. 
\end{proof}

\subsection{Affine satisfiability}

In this section, we will prove that if $S$ satisfies none of the previous cases
and contains only affine relations, then the corresponding Ramseyan satisfaction problem
is equivalent to $\lrsataffine$ over $\rca$. So suppose that
\begin{itemize}
	\item[(i)] $S$ contains only affine relations \hfill ($L_2 \subseteq \poly{S}$)
	\item[(ii)] $S$ contains a relation which is not bijunctive \hfill ($D_2 \not \subseteq \poly{S}$)
	\item[(iii)] $S$ contains a relation which is not $\false$-valid \hfill ($I_0 \not \subseteq \poly{S}$)
	\item[(iv)] $S$ contains a relation which is not $\true$-valid \hfill ($I_1 \not \subseteq \poly{S}$)
	\item[(v)] $[x \neq y] \in Rep_{NC}(S)$ \hfill ($\poly{S} \subseteq D$)
\end{itemize}
In particular, $\poly{S} \subsetneq D$.

\begin{theorem}\label{thm:schaefer-affine-dichotomy}
$\rca \vdash \lrsats{S} \biimp \lrsataffine$
\end{theorem}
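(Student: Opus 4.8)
The plan is to prove both directions. The direction $\rca \vdash \lrsataffine \imp \lrsats{S}$ is immediate, since every relation in $S$ is affine by assumption~(i), so every instance of $\lrsats{S}$ is an instance of $\lrsataffine$. The substance is the forward direction $\rca \vdash \lrsats{S} \imp \lrsataffine$, which we want to handle by a Post-lattice argument parallel to the bijunctive case (Theorem~\ref{thm:schaefer-bij-dichotomy}). First I would use assumptions~(i)--(v) to pin down $\poly{S}$ in Post's lattice: we have $L_2 \subseteq \poly{S}$ by~(i), and~(ii)--(v) rule out $D_2 \subseteq \poly{S}$, $I_0 \subseteq \poly{S}$, $I_1 \subseteq \poly{S}$, and force $\poly{S} \subseteq D$. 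The clones between $L_2$ and $D$ satisfying these exclusions should leave only $\poly{S} = L_2$ itself. Then by the folklore lemma $\inv{\poly{S}} = \coclone{S}$, and since $[x \neq y] \in Rep_{NC}(S)$ forces $[x=y] \in Rep_{NC}(S)$ and hence $Rep_{NC}(S) = \coclone{S}$, we get that $Rep_{NC}(S) = \inv{L_2}$ is exactly the set of all affine relations.

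With $Rep_{NC}(S)$ equal to all affine relations, the argument then mimics the end of the proof of Theorem~\ref{thm:schaefer-bij-dichotomy}: since $\inv{L_2}$ is finitely generated as a co-clone, there is a finite (hence c.e.) basis $S_0 \subseteq S$ with $Rep_{NC}(S_0)$ containing every affine relation. By Lemma~\ref{lem:rep-lrsats} (applicable because $[x \neq y] \in Rep_{NC}(S_0)$ once we enlarge $S_0$ to also witness the inequality relation, and because $S_0$ is c.e.), we obtain $\rca \vdash \lrsats{S_0} \imp \lrsataffine$. Every instance of $\lrsats{S_0}$ is an instance of $\lrsats{S}$, so $\rca \vdash \lrsats{S} \imp \lrsats{S_0} \imp \lrsataffine$, completing the forward direction and hence the equivalence.

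One subtlety worth isolating: to apply Lemma~\ref{lem:rep-lrsats} we need the smaller set $T = S_0$ to both contain (a representation of) $[x \neq y]$ and have $S \subseteq Rep_{NC}(T)$. Since $S$ is affine and $\inv{L_2}$ is finitely generated, finitely many relations of $S$ suffice to generate $\inv{L_2}$ under $Rep_{NC}$; we may further assume $[x \neq y]$ is among the existentially-quantified combinations available, because $[x \neq y]$ is affine and hence lies in $Rep_{NC}$ of such a finite basis. Strictly, this may require also invoking Lemma~\ref{lem:fix-color-lrsats} if $[x] \notin Rep_{NC}(S_0)$ or $[\neg x] \notin Rep_{NC}(S_0)$; by assumptions~(iii) and~(iv) together with Lemma~\ref{lem:schaefer-4.3}, either both singletons are representable or every relation in $S$ is complementive, in which case Lemma~\ref{lem:fix-color-lrsats} supplies the reduction from $\lrsats{T}$ to $\lrsats{S_0 \cup \set{[x],[\neg x]}}$.

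The main obstacle I expect is the clone-theoretic step: verifying over $\rca$ (i.e., with only finite combinatorial reasoning) that the exclusions~(ii)--(v) really force $\poly{S} = L_2$ rather than leaving room for $L_3$, $L$, $L_0$, $L_1$, or some other clone in the interval $[L_2, D]$ of Post's lattice. This requires a careful enumeration of the relevant sublattice and checking each candidate against the four exclusion conditions; it is entirely finitary but is the combinatorial heart of the theorem. Everything downstream — the co-clone identification, the finite-basis extraction, and the reductions via Lemmas~\ref{lem:rep-lrsats} and~\ref{lem:fix-color-lrsats} — is routine once the clone is identified.
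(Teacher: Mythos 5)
There is a genuine gap, and it sits exactly at the step you yourself flag as the ``combinatorial heart'': assumptions (ii)--(v) do \emph{not} force $\poly{S} = L_2$. They only force $L_2 \subseteq \poly{S} \subsetneq D$, which leaves two possibilities, $\poly{S} = L_2$ or $\poly{S} = L_3$. The clone $L_3 = \inv{\poly{}}$-wise corresponds to the \emph{complementive} affine relations; it contains no constants, so it survives conditions (iii) and (iv), it is contained in $D$, so it survives (v), and it does not contain $D_2$, so it survives (ii). A concrete witness is $S = \set{[w \oplus x \oplus y \oplus z = 1]}$: this relation is affine, complementive (negating a satisfying tuple flips the parity by $4$, i.e.\ not at all), not bijunctive, neither $\false$- nor $\true$-valid, and $[x \neq y] = [(\exists z)(x \oplus y \oplus z \oplus z = 1)] \in Rep_{NC}(S)$; here $\poly{S} = L_3 \neq L_2$. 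In that case $Rep_{NC}(S) = \coclone{S} = \inv{L_3}$ consists only of the complementive affine relations, so your claim that $Rep_{NC}(S)$ is the set of \emph{all} affine relations is false, no finite $S_0 \subseteq S$ with $Rep_{NC}(S_0) \supseteq \inv{L_2}$ exists, and the intended application of Lemma~\ref{lem:rep-lrsats} collapses.

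The repair is not optional bookkeeping but the essential mechanism, and it is what the paper does: observe that $\s{Pol}(S \cup \set{[x],[\neg x]}) = \poly{S} \cap R_2 = L_2$ whether $\poly{S}$ is $L_2$ or $L_3$, hence $\inv{L_2} \subseteq Rep_{NC}(S \cup \set{[x],[\neg x]})$; extract a finite (hence c.e.) $T \subseteq S$ with every affine relation in $Rep_{NC}(T \cup \set{[x],[\neg x]})$ via a finite basis of the co-clone $\inv{L_2}$; and then apply Lemma~\ref{lem:fix-color-lrsats}, whose hypotheses 1--3 are exactly assumptions (iii)--(v), to get $\rca \vdash \lrsats{T} \imp \lrsataffine$. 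Your closing paragraph gestures at this (complementive relations, Lemma~\ref{lem:fix-color-lrsats}), but frames it as a contingency about whether $[x]$ and $[\neg x]$ happen to be representable, rather than as the fix for the fact that your main identification $Rep_{NC}(S) = \inv{L_2}$ fails in the $L_3$ case. As written, the principal line of your argument rests on a false clone-theoretic claim; rewriting it around $S \cup \set{[x],[\neg x]}$ from the start would make it coincide with the paper's proof. The easy direction $\lrsataffine \imp \lrsats{S}$ and the finite-basis/c.e.\ extraction are fine.
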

\begin{proof}
By assumption, every relation in $S$ is affine. Hence $\rca \vdash \lrsataffine \imp \lrsats{S}$.
As $L_2 \subseteq \poly{S} \subsetneq D$, $\poly{S}$ is either $L_3$ or $L_2$.
In particular, $\s{Pol}(S \cup \{[x], [\neg x]\}) = L_2$ 
Considering the corresponding invariants, 
$$
\inv{L_2} \subseteq \s{Inv}(\s{Pol}(S \cup \set{[x], [\neg x]})) = 
\coclone{S \cup \set{[x], [\neg x]}} = Rep_{NC}(S \cup \set{[x], [\neg x]})
$$
There exists a finite basis $S_0$ such that $Rep_{NC}(S_0)$ contains all affine relations.
$\inv{L_2}$ being the set of affine relations, $S_0 \subset Rep_{NC}(S \cup \set{[x], [\neg x]})$.
There exists a finite (hence c.e.) subset $T$ of $S$ such that $S_0 \subseteq Rep_{NC}(T \cup \set{[x], [\neg x]})$.
In particular,  
$$
\{R : R \mbox{ is affine}\} \subseteq Rep_{NC}(S_0) \subseteq Rep_{NC}(T \cup \set{[x], [\neg x]})
$$
By Lemma~\ref{lem:fix-color-lrsats}, $\rca \vdash \lrsats{T} \imp \lrsataffine$, hence
 $\rca \vdash \lrsats{S} \imp \lrsataffine$. 
\end{proof}

\subsection{Remaining cases}

Based on Post's lattice, 
the only remaining cases are $\poly{S} = N_2$ or $\poly{S} = I_2$.

\begin{theorem}\label{thm:dichotomy-remaining}
If $\poly{S} \subseteq N_2$ then $\rca \vdash \lrsats{S} \biimp \lrsat$.
\end{theorem}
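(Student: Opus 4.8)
The plan is to show both directions of the equivalence $\rca \vdash \lrsats{S} \biimp \lrsat$. The direction $\lrsat \imp \lrsats{S}$ is immediate: $\lrsat$ is the principle $\lrsats{T}$ where $T$ ranges over \emph{all} Boolean relations, and any instance of $\lrsats{S}$ is in particular an instance of that more general principle, so $\rca \vdash \lrsat \imp \lrsats{S}$. The content is in the reverse implication $\lrsats{S} \imp \lrsat$, equivalently (via the already-established equivalence of $\lrsat$ with $\rwkl$) $\lrsats{S} \imp \rwkl$. First I would unwind what $\poly{S} \subseteq N_2$ means in Post's lattice: $N_2$ is the clone generated by the ternary ``not-majority''/negation-type function, and $\inv{N_2} = \coclone{S_0}$ for a suitable finite basis $S_0$; crucially $\poly{S} \subseteq N_2$ forces $[x \neq y] \in Rep_{NC}(S)$ (since $\poly{S} \subseteq D$), and $N_2$ being the top nontrivial ``complementive'' clone means $Rep_{NC}(S \cup \set{[x],[\neg x]})$ is all of $\inv{I_2}$, i.e.\ \emph{every} Boolean relation is representable over $S$ together with the two unary constants-as-relations $[x]$ and $[\neg x]$.

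The key step is then to invoke Lemma~\ref{lem:fix-color-lrsats}. Its hypotheses are: $S$ contains a relation that is not $\false$-valid, $S$ contains a relation that is not $\true$-valid, and $[x \neq y] \in Rep_{NC}(S)$. The first two follow from $\poly{S} \subseteq N_2$ since $N_2$ contains neither $I_0$ (the clone forcing $\false$-validity) nor $I_1$; the third was noted above. Since every Boolean relation lies in $Rep_{NC}(S \cup \set{[x],[\neg x]})$, in particular the ``universal'' family generating $\lrsat$ does. As in the proofs of Theorems~\ref{thm:schaefer-bij-dichotomy} and~\ref{thm:schaefer-affine-dichotomy}, I would extract a \emph{finite} basis $S_0$ and then a finite (hence c.e.) subset $T \subseteq S$ with the relevant generating relations for $\lrsat$ contained in $Rep_{NC}(T \cup \set{[x],[\neg x]})$, so that Lemma~\ref{lem:fix-color-lrsats} yields $\rca \vdash \lrsats{T} \imp \lrsat$. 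Any instance of $\lrsats{T}$ being an instance of $\lrsats{S}$, we get $\rca \vdash \lrsats{S} \imp \lrsat$, completing the equivalence.

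The main obstacle I anticipate is the finiteness/c.e.\ bookkeeping rather than the algebra: $\lrsat$ (equivalently $\rwkl$) is a principle over the family of all relations, which is not finitely generated, so one must argue that a single fixed finite instance-coding scheme (the tree-path construction underlying $\rwkl$, or equivalently $\rcolor$ plus finitely many gadget relations) only ever uses relations from a fixed finite subset $T \subseteq S$ — and then verify that Lemma~\ref{lem:fix-color-lrsats} applies to that $T$, i.e.\ that $T$ itself (not just $S$) still has a non-$\false$-valid relation, a non-$\true$-valid relation, and represents $[x \neq y]$. This is where I would be careful: one picks $T$ large enough to simultaneously witness all three hypotheses of Lemma~\ref{lem:fix-color-lrsats} and to contain, in $Rep_{NC}(T \cup \set{[x],[\neg x]})$, the finitely many relations needed to code an arbitrary $\lrsat$ instance (e.g.\ a single relation whose representable relations over $\set{[x],[\neg x]}$ exhaust $\inv{I_2}$, which exists because $N_2$ is a coatom-adjacent clone with finitely generated invariant co-clone). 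The rest is a routine translation of instances along $Rep_{NC}$, exactly parallel to Lemma~\ref{lem:rep-lrsats} and its uses above.
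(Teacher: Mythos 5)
Your proposal is correct and follows essentially the same route as the paper: the forward direction is immediate, and for the converse one observes that $\poly{S} \subseteq N_2$ forces $\poly{S \cup \set{[x]}} = I_2$, so $Rep_{NC}(S_0 \cup \set{[x]})$ contains all Boolean relations for some finite $S_0 \subseteq S$, and Lemma~\ref{lem:fix-color-lrsats} then gives $\lrsats{S_0} \imp \lrsat$. Your extra care in checking that the finite subset itself witnesses the three hypotheses of that lemma is a point the paper glosses over, but it is the same argument.
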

\begin{proof}
The direction $\rca \vdash \lrsat \imp \lrsats{S}$ is obvious. We will prove the converse. 
Because $\poly{S} \subseteq N_2$, $\poly{S \cup \set{[x]}} = I_2$.
$$
Rep_{NC}(S \cup \set{[x]}) = \coclone{S \cup \set{[x]}} = \inv{\poly{S \cup \set{[x]}}} \supseteq \inv{I_2}
$$
Note that $\inv{I_2}$ is the set of all Boolean relations. As $\inv{I_2}$ has a finite basis,
there exists a finite $S_0 \subseteq S$ such that $Rep_{NC}(S_0 \cup \set{[x]})$ contains all Boolean relations.
By Lemma~\ref{lem:fix-color-lrsats}, $\rca \vdash \lrsats{S_0} \imp \lrsat$.
Hence $\rca \vdash \lrsats{S} \biimp \lrsat$. 
\end{proof}

\begin{proof}[Proof of Theorem~\ref{thm:dichotomy-ramsey-schaefer}]
By case analysis over $\poly{S}$.
If $I_0$, $I_1$, $E_2$ and $V_2$ are included in $\poly{S}$ (that is, if $S$ contains
only $\false$-valid, $\true$-valid, horn or co-horn relations) then
by Theorem~\ref{thm:schaefer-ramsey-rca-rcolor}, $\rca \vdash \lrsats{S}$.
If $D_1 \subseteq \poly{S} \subseteq D$ then $\rca \vdash \lrsats{S} \biimp \lrsats{[x \neq y]}$ 
by Theorem~\ref{thm:schaefer-bij-dichotomy}.
By the same theorem, if $\poly{S} = D_2$ then $\rca \vdash \lrsats{S} \biimp \lrsatbijunctive$.
If $L_2 \subseteq \poly{S} \subseteq L_3$ then by Theorem~\ref{thm:schaefer-affine-dichotomy},
$\rca \vdash \lrsats{S} \biimp \lrsataffine$. Otherwise, $I_2 \subseteq \poly{S} \subseteq N_2$
in which case $\rca \vdash \lrsats{S} \biimp \lrsat$ by Theorem~\ref{thm:dichotomy-remaining}.
\end{proof}

The principle $\lrsats{[x \neq y]}$ coincides with an already existing principle
about bipartite graphs.
For $k \in \Nb$, we say that a graph $G = (V,E)$ is \emph{$k$-colorable} 
if there is a function $f \colon V \imp k$ such that $(\forall (x,y) \in E)(f(x) \neq f(y))$, 
and we say that a graph is \emph{finitely $k$-colorable} if every finite induced subgraph is $k$-colorable.

\begin{definition}
Let $G = (V,E)$ be a graph.  A set $H \subseteq V$ is \emph{homogeneous for $G$} 
if every finite $V_0 \subseteq V$ induces a subgraph that is $k$-colorable 
by a coloring that colors every $v \in V_0 \cap H$ color $0$.
$\lrcolor_k$ is the statement ``For every infinite, finitely 
$k$-colorable graph $G = (V,E)$ and every infinite $L \subseteq V$ there 
exists an infinite $H \subseteq L$ that is homogeneous for~$G$''. $\rcolor_k$ 
is the restriction of $\lrcolor_k$ with $L = V$.
An instance of $\lrcolor_k$ is a pair $(G, L)$. For $\rcolor_k$, 
it is simply the graph~$G$.
\end{definition}

\begin{theorem}
$\rca \vdash \rcolor_2 \biimp \lrsats{[x \neq y]}$
\end{theorem}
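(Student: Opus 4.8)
The plan is to prove the equivalence $\rca \vdash \rcolor_2 \biimp \lrsats{[x \neq y]}$ by directly translating between the two principles, since an instance of $\lrsats{[x \neq y]}$ is essentially a presentation of a bipartiteness-constraint graph. First I would show $\rca \vdash \lrsats{[x \neq y]} \imp \rcolor_2$. Given an infinite, finitely $2$-colorable graph $G = (V, E)$ and an infinite $L \subseteq V$, I form the instance $(V, C, L)$ of $\lrsats{[x \neq y]}$ where $C = \set{ (x \neq y) : \set{x, y} \in E }$; here the variables are the vertices. Finite $2$-colorability of $G$ gives, for each finite subset of $C$, a satisfying $\set{0,1}$-assignment (a proper $2$-coloring of the corresponding finite subgraph), so $C$ is finitely satisfiable in the sense of the definition. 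An infinite $H \subseteq L$ homogeneous for $C$ with color $c \in \Bool$ then satisfies: every finite conjunction of inequality constraints (i.e. every finite subgraph) has a proper $2$-coloring assigning color $c$ to all vertices of $H$ in that subgraph; after relabelling $c$ as $0$ this is exactly the condition that $H$ is homogeneous for $G$ in the sense of the $\rcolor_k$ definition.

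Next I would prove the converse, $\rca \vdash \rcolor_2 \imp \lrsats{[x \neq y]}$. Given an instance $(V, C, L)$ of $\lrsats{[x \neq y]}$, each formula in $C$ is of the form $(y_1 \neq y_2)$ for $y_1, y_2 \in V$ (using that the only relation is $[x \neq y]$, possibly up to the trivial existential prefix, which here adds nothing since $[x \neq y]$ has arity $2$ with no hidden variables). Define a graph $G = (V, E)$ by putting $\set{y_1, y_2} \in E$ whenever $(y_1 \neq y_2) \in C$; $E$ exists by $\Delta^0_1$-comprehension since $C$ is given as a sequence. Finite satisfiability of $C$ translates into finite $2$-colorability of $G$: any $\set{0,1}$-assignment satisfying a finite $C_0 \subseteq C$ is exactly a proper $2$-coloring of the finite subgraph on $\opvars{C_0}$. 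Applying $\rcolor_2$ — more precisely $\lrcolor_2$, with parameter $L$ — to $(G, L)$ yields an infinite $H \subseteq L$ homogeneous for $G$; unwinding the definition, every finite $V_0 \subseteq V$ induces a subgraph $2$-colorable by a coloring giving color $0$ to all of $V_0 \cap H$, which says precisely that every finite conjunction of formulas in $C$ is satisfied by some assignment taking the value $\false$ (i.e. $0$) on all relevant members of $H$, so $H$ is homogeneous for $C$ with color $\false$.

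One technical point to be careful about is the direction of the statement: the theorem names $\rcolor_2$, but the natural reduction produces and consumes the localized version $\lrcolor_2$. I would therefore invoke (or observe) that $\rcolor_2$ and $\lrcolor_2$ are equivalent over $\rca$ — this parallels the already-stated fact that $\lrsats{S}$ principles are equivalent to their non-localized versions (Theorem~\ref{thm:localized-to-unlocalized}), and the same padding argument works for graphs: an instance $(G, L)$ of $\lrcolor_2$ can be coded as an instance of $\rcolor_2$ by taking a disjoint copy of $G$ restricted to $L$ together with enough isolated or trivially-handled vertices, recovering a localized solution from a global one. The main obstacle, such as it is, is purely bookkeeping: making sure the ``finite conjunction of a finite set of formulas'' formulation of homogeneity for $C$ lines up symbol-for-symbol with the ``every finite $V_0$ induces a $k$-colorable subgraph'' formulation of homogeneity for $G$, and that both the passage $C \mapsto E$ and $E \mapsto C$ are effective (hence $\Delta^0_1$) so that no comprehension beyond $\rca$ is needed. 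Everything else is a direct unfolding of definitions.
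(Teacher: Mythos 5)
Your proposal matches the paper's proof, which is exactly the direct instance translation $E = \set{\set{x,y} : (x \neq y) \in C}$ together with the equivalence $\rca \vdash \rcolor_2 \biimp \lrcolor_2$, for which the paper simply cites \cite{bienvenurwkl} rather than reproving it. One caveat on your aside: your sketch of that localization step via ``a disjoint copy of $G$ restricted to $L$ plus isolated vertices'' would not work as stated, since restricting to $L$ discards even-length paths through $V \setminus L$ that force pairs of $L$-vertices to share a color, so you should genuinely invoke the cited equivalence (as the paper does) rather than this padding.
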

\begin{proof}
See~\cite{bienvenurwkl} for a proof of $\rca \vdash \rcolor_2 \biimp \lrcolor_2$.
There exists a direct mapping between an instance $(V, C, L)$ of $\lrsats{[x \neq y]}$
and an instance $(G, L)$ of $\lrcolor_2$ where $G = (V, E)$ by taking
$E = \{ \set{x, y} : x \neq y \in C \}$. 
\end{proof}

\section{The strength of satisfiability}

Localized principles are relatively easy to manipulate
as they can express relations defined using existential quantifier
by restricting the localized set $L$ to the variables not captured by any quantifier.
However we will see that when the set of relations has some good closure
properties, the unlocalized version of the principle is as expressive as its localized one.

\begin{theorem}\label{thm:localized-to-unlocalized}
$\rca$ proves that if $S$ be a $\Sigma^0_1$ \emph{co-clone} then $\rsats{S} \biimp \lrsats{S}$
\end{theorem}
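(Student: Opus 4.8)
The nontrivial direction is $\rsats{S} \imp \lrsats{S}$, since the converse is immediate (restricting to $L = V$). The plan is to take an instance $(V, C, L)$ of $\lrsats{S}$ and manufacture from it an instance $(V', C')$ of $\rsats{S}$ whose homogeneous sets can be pulled back to homogeneous subsets of $L$ for $C$. The key difficulty is that $\rsats{S}$ gives us a homogeneous set inside \emph{all} of $V'$, with no control over where it lands, so we must engineer $V'$ and $C'$ so that any infinite homogeneous set is \emph{forced} to lie (essentially) within a copy of $L$. The natural device, given that $S$ is a co-clone, is to use the equality relation $[x = y] \in S$ to collapse all variables outside $L$ to a single representative: enumerate $V \setminus L = \{z_0, z_1, \dots\}$, pick a distinguished variable $z^\star \in V \setminus L$, and add to $C'$ the formulas $z_i = z^\star$ for every $i$. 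Then in any satisfying assignment all the $z_i$ receive the same value, and I will arrange that a homogeneous set cannot consist of these collapsed variables.

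\textbf{Forcing the homogeneous set into $L$.} Simply collapsing the outside variables is not yet enough, because a homogeneous set could still sit among the collapsed variables (they all get one color, so $\{z^\star\} \cup \{z_i\}$ is trivially homogeneous with that color — but it must be infinite and that is fine). To prevent this, the trick is to introduce, for the outside block, a gadget that makes it \emph{non}-homogeneous as an infinite set in \emph{both} colors simultaneously: using $[x \neq y]$ (available since $S$ is a co-clone containing equality, and — here one must check — the instance can be taken to include a disequality, or else the outside block collapses to a single point and contains no infinite subset at all). Concretely, if $[x\ne y]\notin Rep_{NC}(S)$ one argues directly; otherwise, pair up the outside variables $z_{2i} \neq z_{2i+1}$ so that no infinite set among them can be monochromatic. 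Then any infinite $H \subseteq V'$ homogeneous for $C'$ must contain infinitely many variables of $L$, and $H \cap L$ is the desired homogeneous set for $(V, C, L)$: a finite subfamily of $C$ is satisfied by a suitable assignment agreeing with the homogeneous color on $H \cap L$ because the corresponding finite subfamily of $C'$ (with the collapse equations) is, and restricting that assignment back to $V$ works since $C \subseteq C'$ up to the substitution $z_i \mapsto z^\star$.

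\textbf{Effectiveness and the co-clone hypothesis.} The reason the theorem demands $S$ be a $\Sigma^0_1$ co-clone rather than an arbitrary set is twofold: first, $[x = y]$ and (when needed) $[x \neq y]$ must be among the relations we are allowed to write down in $C'$, which is exactly closure of $S$ under the co-clone operations together with $\Sigma^0_1$-ness so the translation is effective and $C'$ exists by $\Delta^0_1$-comprehension; second, the finite-satisfiability of $C'$ must be checked, and this follows because every valid assignment of a finite piece of $C$ extends (by setting all outside variables to the common value dictated by the disequality gadget, which is consistent on any finite piece) to a valid assignment of the corresponding finite piece of $C'$. I expect the main obstacle to be precisely this bookkeeping: ensuring the construction of $C'$ proceeds in stages so that no formula over the first $s$ variables is added after stage $s$ (keeping $C'$ computable from $C$), while simultaneously guaranteeing both that $C'$ is finitely satisfiable and that the disequality gadget genuinely obstructs infinite monochromatic sets outside $L$ — the same stagewise technique already used in Lemma~\ref{lem:rep-lrsats} should carry through here with minor modification.
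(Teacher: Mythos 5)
There is a genuine gap, and it lies in the central device of your construction. The variables in $V \setminus L$ carry real constraints coming from $C$: collapsing them to a single representative $z^\star$ (or substituting $z_i \mapsto z^\star$ inside the formulas of $C$) changes the semantics of the instance and can destroy finite satisfiability outright — for example, if some finite subset of $C$ entails $z_0 \neq z_1$ for two variables outside $L$, your collapsed instance is unsatisfiable. The disequality gadget has the same defect (it adds constraints on variables that $C$ already constrains, possibly inconsistently), it is in tension with the collapse you just imposed, and it does not even achieve its stated purpose: an infinite set picking one variable from each pair $\{z_{2i}, z_{2i+1}\}$ violates no disequality and can perfectly well be homogeneous, so nothing forces the homogeneous set to meet $L$. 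Finally, $[x \neq y]$ need not belong to a co-clone at all (the horn relations form a co-clone not containing it), so the gadget may not be expressible, and the ``argue directly'' fallback is not an argument.

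What is missing is the actual use of the co-clone hypothesis, which is the heart of the paper's proof: closure under conjunction and existential quantification lets you \emph{project the constraints onto $L$} rather than neutralize the outside variables. Concretely, one forms $C_L$, the ($\Sigma^0_1$, hence enumerable) set of $S$-formulas over variables of $L$ of the form $[(\exists \vec{y})\bigwedge C_{fin}]$ entailed by finite subsets $C_{fin} \subseteq C$; homogeneity of a subset of $L$ for $C$ reduces to homogeneity for $C_L$ precisely because any witness of non-homogeneity projects to a formula of $C_L$ whose variables lie in $L$. The paper then splits into cases: if $C_L$ is finite, a tail of $L$ is homogeneous for both colors; if $C_L$ is infinite, one builds an $\rsats{S}$ instance over $L$ together with \emph{fresh} variables $y_s$, each tied by an equation to a single designated variable of $L$, so that any infinite homogeneous set transfers back into $L$. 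The outside variables of the new instance are thus individually anchored to $L$ by construction — they are not the original $V \setminus L$, which has been eliminated by projection. Your proposal never performs this projection, and without it the reduction does not go through.
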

\begin{proof}
The implication $\lrsats{S} \imp \rsats{S}$ is obvious. 
To prove the converse, let $(V, C, L)$ be an instance of $\lrsats{S}$
with $V = \set{x_i : i \in \omega}$ and $C = \set{\theta_i : i \in \omega}$.
Let $C_L$ be a computable enumeration of formulas $\phi(\vec{x}) = R(\vec{x})$ with $R \in S$ and $\vec{x} \subset L$ 
such that there exists a finite subset $C_{fin}$ of $C$ for which
every valid truth assignment $\nu$ over $C_{fin}$ satisfies $\nu(\phi) = \true$.

If $C_L$ is finite, then there is a bound $m$ such that if $\phi \in C_L$
then $max(i : x_i \in \opvars{\phi}) \leq m$. Then take $H = \set{x_i \in L : i > m}$.
$H \subseteq L$ and is infinite because~$L$ is infinite.

\begin{claim}
For every $c \in \Bool$, $H$ is homogeneous for $C$ with color~$c$.
\end{claim}
\emph{Proof of claim.}
If not then there exists a finite subset $C_{fin}$ of $C$
such that $H$ is not homogeneous for $C_{fin}$ with color~$c$.
Let $\vec{y} = \opvars{C_{fin}} \setminus L$.
Because $S$ is a co-clone, it is closed under finite conjunction and projection, hence
$(\exists \vec{y})\bigwedge C_{fin}$ is equivalent to an $S$-formula, say $\varphi$.
In particular $\opvars{\varphi} \subseteq \opvars{C_{fin}} \cap L$ and $\varphi \in C_L$.
For every assignment $\nu$ satisfying $\varphi$, there is a variable $x \in H$ such that $\nu(x) = \neg c$.
Then $\opvars{\varphi} \cap H \neq \emptyset$. However $\varphi \in C_L$, so 
$\opvars{\varphi} \cap H = \emptyset$ by definition of $H$. Contradiction. This finishes
the proof of the claim.

So suppose instead $C_L = \set{\phi_i : i \in \Nb}$ is infinite, and suppose each $\phi_i$ is unique.
We construct an instance $(V', C')$ of $\rsats{S}$ by taking
$V' = L \cup \set{y_n : n \in \Nb}$ and constructing $C'$ by stages as follows:
At stage 0, $C' = \emptyset$.
At stage $s+1$, look at $\phi_s = R(x_1, \dots, x_m)$ and let $x_i$ be the greatest variable
in lexicographic order among $x_1, \dots, x_m$. Add the formula $x_i = y_s$
and the formula $R(x_1, \dots, x_{i-1}, y_s, x_{i+1}, \dots, x_m)$ to $C'$.
Then go to next stage.
This finishes the construction.
Note that $C'$ is satisfiable, otherwise
by definition there would be a finite unsatisfiable subset $C_{fin} \subset C_L$
from which we could extract an unsatisfiable subset of $C$.
Also note that, by assuming that $\phi_i$ is unique and $x_i$ is the greatest variable
in lexicographic order, the number of stages $s$ such that the formula $x = y_s$
is added to $C'$ is finite for each variable~$x$.

Let $H'$ be an infinite set homogeneous for $C'$ with color $c$.
We can extract from $H'$ an infinite subset of $L$ homogeneous for $C'$ with color $c$
because either $L \cap H'$ or $\set{x \in L: (x = y_n) \in C' \mbox{ and } y_n \in H'}$
is infinite and both are homogeneous for $C'$ with color $c$.
So fix $H \subseteq L$, an infinite set homogeneous for $C'$ (and for $C_L$) with color~$c$.

\begin{claim}
$H$ is homogeneous for $C$ with color $c$.
\end{claim}
\emph{Proof of claim.}
By the same argument as previous claim, suppose there is a finite subset $C_{fin}$ of~$C$
such that $H$ is not homogeneous for $C_{fin}$ with color~$c$. 
Let $\varphi$ be the $S$-formula equivalent to~$(\exists \vec{y})\bigwedge C_{fin}$
where $\vec{y} = \opvars{C_{fin}} \setminus L$.
For every valid assignment $\nu$ for $\varphi$, there is a variable $x \in H$ such that $\nu(x) = \neg c$.
But $\varphi \in C_L$ and hence $H$ is homogeneous for $\varphi$ with color~$c$. Contradiction. 
This last claims finishes the proof of Theorem~\ref{thm:localized-to-unlocalized}.
\end{proof}

Noticing that affine (resp. bijunctive) relations form a co-clone, we immediately 
deduce the following corollary.

\begin{corollary}\label{coro:localized-affine-bijunctive}
$\rsataffine$ and $\rsatbijunctive$ are equivalent to their local version
over $\rca$.
\end{corollary}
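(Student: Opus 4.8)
The plan is to deduce Corollary~\ref{coro:localized-affine-bijunctive} directly from Theorem~\ref{thm:localized-to-unlocalized} by verifying its single hypothesis: that the class of affine relations and the class of bijunctive relations are each a $\Sigma^0_1$ \emph{co-clone}. Recall that $\lrsataffine$ (resp. $\lrsatbijunctive$) was \emph{defined} to be $\lrsats{S}$ where $S$ is the set of all affine (resp. bijunctive) relations, and likewise $\rsataffine = \rsats{S}$; so once the hypothesis is checked, Theorem~\ref{thm:localized-to-unlocalized} immediately yields $\rca \vdash \rsataffine \biimp \lrsataffine$ and $\rca \vdash \rsatbijunctive \biimp \lrsatbijunctive$, which is exactly the statement of the corollary.

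First I would recall why each class is a co-clone. By Theorem~\ref{thm:schaefer-closure}, a relation is affine iff it is closed under $\mathsf{aff}$, i.e. iff $L_2 \subseteq \poly{\cdot}$, and bijunctive iff it is closed under $\mathsf{maj}$, i.e. iff $D_2 \subseteq \poly{\cdot}$. So in both cases $S = \inv{B}$ for a fixed finite set $B$ of Boolean functions ($B = \{\mathsf{aff}\}$ or $B = \{\mathsf{maj}\}$, together with the projections, which lie in every clone anyway). A set of the form $\inv{B}$ is automatically closed under conjunction, existential quantification, and contains the equality relation $[x=y]$ — this is the standard Galois-connection fact, the same one invoked earlier in the excerpt as $\inv{\poly{S}} = \coclone{S}$ — so $S$ is a co-clone in the sense of the Definition preceding that folklore lemma. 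Second, I would check that each class is $\Sigma^0_1$: a relation $R \subseteq \Bool^n$ is a finite object, and ``$R$ is closed under $\mathsf{aff}$'' (resp. $\mathsf{maj}$) is a $\Delta^0_0$ predicate of $R$ once $n$ and $R$ are given, so the set of all affine (resp. bijunctive) relations, enumerated by running over all arities $n$ and all $R \subseteq \Bool^n$ and keeping those that pass the test, is indeed $\Sigma^0_1$ (in fact computable).

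With both facts in hand, the corollary follows by a one-line application of Theorem~\ref{thm:localized-to-unlocalized}. There is essentially no obstacle here — the content is entirely in Theorem~\ref{thm:localized-to-unlocalized} itself. The only point requiring a word of care is bookkeeping: confirming that ``$S$ is a $\Sigma^0_1$ co-clone'' as used in Theorem~\ref{thm:localized-to-unlocalized} is literally the conjunction of the two properties just verified, and that the definitions of $\lrsataffine$, $\rsataffine$ (and the bijunctive analogues) match $\lrsats{S}$, $\rsats{S}$ for the relevant $S$. Both are immediate from the definitions given earlier, so the proof of the corollary is the single sentence already present in the excerpt, and I would leave it at that.
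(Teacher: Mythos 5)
Your proposal is correct and takes essentially the same route as the paper, which simply observes that the affine and bijunctive relations each form a ($\Sigma^0_1$) co-clone and then invokes Theorem~\ref{thm:localized-to-unlocalized}; you merely spell out the closure and effectiveness checks that the paper leaves implicit.
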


A useful principle below $\wkl$ for studying the strength of a statement
is the notion of \emph{diagonally non-computable function}.

\begin{definition}
A total function $f$ is \emph{diagonally non-computable} if $(\forall e)f(e) \neq \Phi_e(e)$.
$\dnr$ is the corresponding principle, i.e. for every $X$, there exists a function d.n.c. relative to~$X$.
\end{definition}

$\dnr$ is known to coincide with the restriction of $\rwkl$ to trees of positive measure
(\cite{flood2012reverse,bienvenurwkl}). On the other side, there exists an $\omega$-model
of $\dnr$ which is not a model of $\rcolor_2$ (\cite{bienvenurwkl}).
We will now prove that we can compute a diagonally non-computable function
from any infinite set homogeneous for a particular set
of affine formulas. As $\rsat$ implies $\lrsataffine$ over $\rca$,
it gives another proof of $\rca \vdash \rwkl \imp \dnr$.

\begin{theorem}\label{RSATfin3provesDNR}
$\rca \vdash \rsataffine \imp \dnr$.
\end{theorem}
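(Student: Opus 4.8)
The plan is to encode the graph of a potential diagonally non-computable function as a finitely satisfiable set of affine formulas, so that any infinite homogeneous set yields enough information to diagonalize against all the partial computable functions $\Phi_e$. The key observation is that an affine clause of the form $z_1 \oplus \dots \oplus z_k = i$ can force parity constraints among variables; in particular, with a block of variables $u^e_0, \dots, u^e_{k_e-1}$ dedicated to index $e$, a single affine constraint $u^e_0 \oplus \dots \oplus u^e_{k_e - 1} = 1$ asserts that an odd number of the $u^e_j$ are true, which can be read as ``the value of $f$ at $e$ is some $j$ with $u^e_j = \true$.'' The subtlety is that for $\dnr$ we only need $f(e) \neq \Phi_e(e)$, and $\Phi_e(e)$ need not converge; so whenever $\Phi_e(e)\halts$ with value $v < k_e$, we add the affine constraint forcing $u^e_v = \false$ (by including the unit affine formula $u^e_v = 0$, i.e. $u^e_v \oplus u^e_v \oplus \dots$ appropriately phrased, or more simply a clause asserting $u^e_v = 0$ which is affine). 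This keeps the set finitely satisfiable: any finite subset mentions only finitely many indices $e$, and for each we can pick a value not ruled out, using $k_e$ large enough (say $k_e \geq 2$, or large enough that fewer than $k_e$ values are forbidden — but since at most one value $\Phi_e(e)$ is ever forbidden per index, $k_e = 2$ suffices, with the odd-parity constraint forcing exactly one of $u^e_0, u^e_1$ true). The point is that the constraint ``$u^e_0 \oplus u^e_1 = 1$'' together with at most one of ``$u^e_0 = 0$'' or ``$u^e_1 = 0$'' is always satisfiable.

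First I would fix the variable set $V$ partitioned into blocks $B_e = \set{u^e_0, u^e_1}$ for $e \in \Nb$, and define $C$ in stages: at every stage put $u^e_0 \oplus u^e_1 = 1$ into $C$ for the next index $e$, and whenever the computation $\Phi_e(e)$ halts (which is a $\Sigma^0_1$ event, detected by stage-by-stage simulation) with value $v \in \set{0,1}$, add the affine unit clause forcing $u^e_v = \false$ to $C$. This $C$ is a $\Delta^0_1$ set (the halting events are enumerated, and once we see them we commit, so membership in $C$ is decidable) of affine $S$-formulas, and it is finitely satisfiable by the argument above — this can be checked in $\rca$ since it only involves finite combinatorics on each block. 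Then $(V, C)$ is an instance of $\rsataffine$.

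Next, given an infinite set $H \subseteq V$ homogeneous for $C$ with some color $c \in \Bool$, I would extract the d.n.c. function. Infinitely many blocks $B_e$ meet $H$; restricting to the infinitely many $e$ for which $H \cap B_e \neq \emptyset$ gives us, $\Delta^0_1$ in $H$, a value $j(e) \in \set{0,1}$ with $u^e_{j(e)} \in H$. I claim that on this infinite set of indices, the function $e \mapsto j(e)$ is d.n.c. after the obvious adjustment: if $\Phi_e(e)\halts$, then homogeneity with color $c$ means every finite conjunction extending the relevant part of $C$ — including the constraints $u^e_0 \oplus u^e_1 = 1$ and $u^e_{\Phi_e(e)} = \false$ — is satisfied by an assignment giving $u^e_{j(e)}$ the value $c$; since $u^e_0 \oplus u^e_1 = 1$ forces the two block variables to differ, if $c = \true$ then $u^e_{j(e)} = \true \neq u^e_{\Phi_e(e)}$ forces $j(e) \neq \Phi_e(e)$, and if $c = \false$ then $u^e_{1 - j(e)} = \true$, and combined with the forbidding clause this still pins down $\Phi_e(e)$ to be $j(e)$ — so in that case we output $1 - j(e)$ instead. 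Either way, a uniformly computable (in $H$) total function diagonalizing against all $\Phi_e$ on an infinite domain is obtained; composing with the standard trick of padding the domain to all of $\Nb$ (mapping $e$ to $j$ of the $e$-th relevant index) yields a genuine d.n.c. function relative to $H$. Running this over an arbitrary oracle $X$ (relativize $\Phi_e$ to $X$ throughout) gives $\dnr$.

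The main obstacle I anticipate is getting the color bookkeeping exactly right: the homogeneous set fixes the \emph{same} color $c$ on \emph{all} of $H$, and one must argue that $c$ together with the block-parity constraint genuinely excludes $\Phi_e(e)$ — this is where the choice $k_e = 2$ and the exclusive-or constraint pays off, because $u^e_0 \oplus u^e_1 = 1$ means knowing the color of one block variable determines the color of the other, so knowing $u^e_{j(e)} = c$ determines the full assignment on $B_e$ in any satisfying extension, and that determined value is incompatible with $u^e_{\Phi_e(e)} = \false$ unless $\Phi_e(e) \neq j(e)$ (when $c = \true$) or $\Phi_e(e) \neq 1 - j(e)$ (when $c = \false$). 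The rest — finite satisfiability of $C$, $\Delta^0_1$-ness of the construction, and the formalization in $\rca$ — is routine, and since $\rca \vdash \rsat \imp \lrsataffine$ (noting $\rsataffine$ is a subprinciple) this also re-derives $\rca \vdash \rwkl \imp \dnr$ as remarked.
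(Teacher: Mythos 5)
There is a genuine gap in the extraction of the d.n.c.\ function from the homogeneous set, and it is exactly the difficulty the construction of the instance must be designed around. You dedicate a fixed finite block $B_e = \set{u^e_0, u^e_1}$ to each index $e$ and read off the answer for $e$ from $H \cap B_e$. But a solution $H$ to your $\rsataffine$-instance is merely \emph{some} infinite subset of $V$: it may miss $B_e$ entirely for all but a sparse, uncontrollable set of indices $e$. On the indices whose block meets $H$ your color bookkeeping is correct, but that only yields a function $j$ defined on an infinite $H$-computable set $I = \set{i_0 < i_1 < \dots}$ with $j(i) \neq \Phi_i(i)$ for $i \in I$. The ``standard padding trick'' you invoke does not exist: setting $f(e) = j(i_e)$ gives $f(e) \neq \Phi_{i_e}(i_e)$, i.e.\ avoidance of the partial $H$-computable function $e \mapsto \Phi_{i_e}(i_e)$, not of $e \mapsto \Phi_e(e)$. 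Since the re-indexing $e \mapsto i_e$ is only computable from $H$, the s-m-n theorem cannot pull the diagonal back to the standard enumeration, and there is no reason $I$ should contain, for each $e$, an index $i$ with $\Phi_i(i) \simeq \Phi_e(e)$. Any scheme in which requirement $e$ is answered by a pre-assigned finite set of variables runs into this wall.

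The paper's proof is built precisely to avoid it. Rather than asking where $H$ meets a designated block, it sets $g(e)$ to be the canonical index of the set $D_{e,g(e)}$ consisting of the \emph{least $t(e)$ elements of $H$} --- these always exist, so $g$ is total and every $e$ is handled. When $\Phi_e(e)\halts$, it adds the affine constraint asserting that the variables of the even-sized set $D_{e,\Phi_e(e)}$ have odd parity (padded with $x_s \oplus x_s$ for a fresh $x_s$ to keep $C$ computable); this makes $D_{e,\Phi_e(e)}$ non-monochromatic under every satisfying assignment, hence not homogeneous, hence not contained in $H$, so $D_{e,g(e)} \neq D_{e,\Phi_e(e)}$ and $g(e) \neq \Phi_e(e)$. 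The growth condition $t(e+1) = 2 + \sum_{i \leq e} t(i)$ supplies the fresh variables needed for finite satisfiability. Your instance is computable and finitely satisfiable as claimed, but without the ``read the answer off the positions of the elements of $H$ itself'' device (or an equivalent), the proposal does not prove $\dnr$.
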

\begin{proof}
We construct a computable set $C$ of affine formulas over a computable set $V$ of variables
such that every infinite set homogeneous for $C$ computes a diagonally non-computable function. Relativization is straightforward.
Let $t : \Nb \to \Nb$ be the computable function defined by
$t(0) = 2$ and $t(e+1) = 2 + \sum_{i=0}^e t(i)$.
Note that every image by $t$ is even.
For every $e \in \Nb$, let $\tuple{D_{e,j} : j \in \Nb}$ denote the canonical enumeration
of all finite sets of size $t(e)$.
We fix a countable set of variables $V = \set{x_0, x_1, \dots}$ a define a set of formulas $C$ satisfying the following requirements:
$$
  \Rcal_e:\ \Phi_e(e) \downarrow \Rightarrow D_{e, \Phi_e(e)} \mbox{ is not homogeneous for } C
$$

We first show how to construct a d.n.c. function from an infinite set~$H$ homogeneous for~$C$,
assuming that each requirement is satisfied.
Let $g(\cdot)$ be such that $D_{e,g(e)}$ are the least $t(e)$ elements of $H$.
We claim that $g$ is a d.n.c. function: 
If $\Phi_e(e) \uparrow$ then obviously $g(e) \neq \Phi_e(e)$.
If $\Phi_e(e) \downarrow$ then because of requirement $\Rcal_e$, 
$D_{e,\Phi_e(e)} \cap \bar H \neq \emptyset$, hence $D_{e,g(e)} \neq D_{e, \Phi_e(e)}$
so $g(e) \neq \Phi_e(e)$. 

We define $C$ by stages. At stage 0, $C = \emptyset$. To make $C$ computable, 
we will not add to~$C$ any formula over $\set{x_i : i \leq s}$ after stage $s$.
Suppose at stage $s$ $\Phi_{e,s}(e) \downarrow$ for some $e < s$ -- we can assume w.l.o.g. that at most one $e$
halts at each stage --. Then add $x_s \oplus x_s \bigoplus D_{e,\Phi_{e,s}(e)}$ to~$C$. This finishes stage $s$.
One easily check that each requirements is satisfied as $x_s \oplus x_s \bigoplus D_{e,\Phi_{e,s}}$
is logically equivalent to~$\bigoplus D_{e,\Phi_{e,s}(e)}$, and as $D_{e,\Phi_{e,s}(e)}$ has even size,
so the relation is neither $\false$-valid nor $\true$-valid, hence $D_{e, \Phi_{e,s}(e)}$ is not homogeneous for~$C$.

\begin{claim}
The resulting instance is satisfiable.
\end{claim}
\emph{Proof of claim.}
If not, there exists a finite $C_{fin} \subset C$ which is not satisfiable.
For a given Turing index $e$, define $C_e$ to be the set of formulas 
added in some stage $s$ at which $\Phi_{i,s}(i) \downarrow$ for some $i < e$.
There exists an $e_{max}$ such that $C_{fin} \subseteq C_{e_{max}}$.
We will define a valid assignment $\nu_e$ of $C_e$ by $\Sigma_1$-induction over $e$.

If $e = 0$, then $C_0 = \emptyset$ and $\nu_0 = \emptyset$ is a valid assignment.
Suppose we have a valid assignment $\nu_e$ for some $C_e$. We will construct a valid
assignment $\nu_{e+1}$ for $C_{e+1}$. If $\Phi_{e}(e) \uparrow$ then $C_{e+1} = C_e$ and $\nu_e$ is a valid
assignment for $C_{e+1}$. Otherwise $\Phi_e(e) \downarrow$.
$C_{e+1} = C_e \cup \set{x_s \oplus x_s \bigoplus D_{e,\Phi_e(e)}}$.
$\opvars{C_e}$ has at most $\sum_{i=0}^{e-1}$ elements, hence $D_{e,\Phi_e(e)} \setminus \opvars{C_e}$
is not empty. We can hence easily extend our valuation $\nu_e$ to~$D_{e,\Phi_e(e)}$ such that
the resulting valuation satisfies $C_{e+1}$.
This claim finishes the proof of Theorem~\ref{RSATfin3provesDNR}.
\end{proof}

\section{Conclusions and questions}

Satisfaction principles happen to collapse in the case of a full assignment existence statement.
The definition is not robust and the conditions of the corresponding dichotomy theorem
evolves if we make the slight modification of allowing conjunctions in our definition of formulas.

However, the proposed Ramseyan version leads to a much more robust dichotomy theorem with four main subsystems.
The conditions of ``tractability'' -- here provability over $\rca$ -- differ from those of Schaefer dichotomy theorem
but the considered classes of relations remain the same.
We obtain the surprising result that infinite versions of horn and co-horn satisfaction problems
are provable over $\rca$ and strictly weaker than bijunctive and affine corresponding principles,
whereas the complexity classification of \cite{allender2005complexity} has shown that
horn satisfiability was P-complete under $AC^0$ reduction, hence at least as strong as
bijunctive satisfiability which is NL-complete.

\subsection{Summary of principles considered}

The following diagram summarizes the known relations
between the principles considered here. Single arrows
express implication over $\rca$. Double arrows mean
that implications are strict. A crossed arrow denotes
a non-implication over $\omega$-models.

\begin{figure}[htbp]
\centering
\includegraphics[width=8cm]{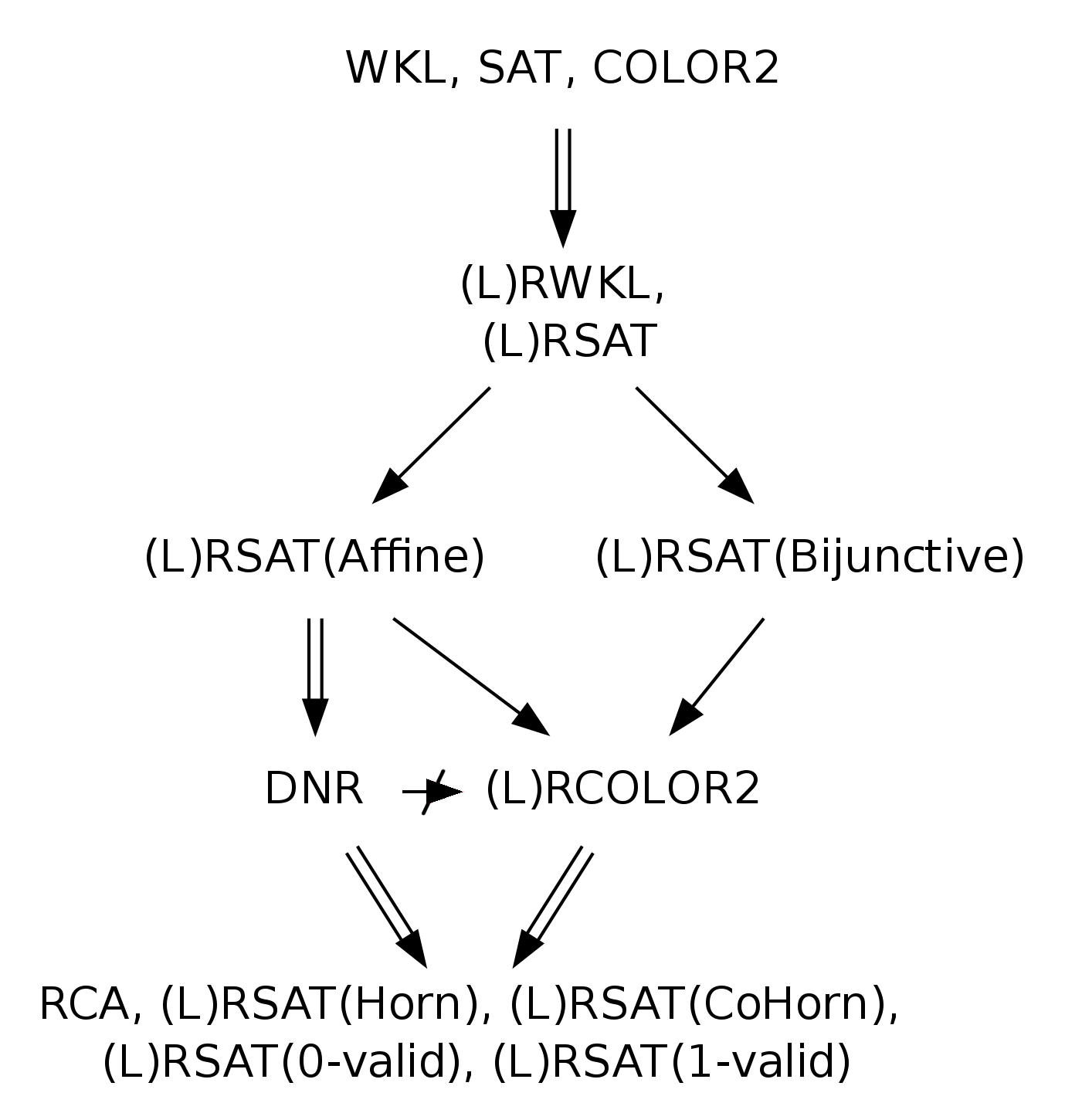}
\caption{Summary of principles}
\end{figure}

Localized and non-localized principles coincide
for the main principles because of
Theorem~\ref{thm:localized-to-unlocalized}.
By \cite{bienvenurwkl}, there exists an $\omega$-model of 
$\dnr$ -- and even $\wwkl$ --which is not a model of $\rcolor_2$.
The missing arrows are all unknown.

\subsection{Open questions}

Very few relations are known between the four main subsystems studied
in this paper: $\rsat$, $\rsataffine$, $\rsatbijunctive$ and $\rcolor_2$.
Theorem~\ref{thm:dichotomy-ramsey-schaefer} states that $\lrsats{S}$
is equivalent to one of the above mentioned principles, or is provable over $\rca$.
However those principles are not known to be pairwise distinct. 
In particular the principle $\rcolor_2$ introduced
in \cite{bienvenurwkl} is not even known to be strictly below $\rwkl$.

\begin{question}
What are the relations between $\rsat$, $\rsataffine$, $\rsatbijunctive$
and $\rcolor_2$~?
\end{question}

\begin{question}
Does $\rcolor_2$ imply $\dnr$ over $\rca$ ? Does it imply $\rwkl$~?
\end{question}

\vspace*{0.5cm}

\noindent \textbf{Acknowledgements}.
The author is thankful to Laurent Bienvenu and Paul Shafer
for their availability during the different steps giving birth to the paper,
and for their useful suggestions.
The author is funded by the John Templeton Foundation (`Structure and Randomness in the Theory of Computation' project). 
The opinions expressed in this publication are those of the author(s) and do not necessarily reflect the views of the John Templeton Foundation.

\bibliography{patey}
\bibliographystyle{plain}

\clearpage
\appendix

\section{Post's lattice}

\begin{center}
\includegraphics[scale=0.60]{imgs/clones.mps}
\end{center}

\clearpage

\newcommand{\var}[1]{\ensuremath{\text{\upshape{Var}}}(#1)}

\newcommand{\enumsat}[1]{\ensuremath{\mathrm{Enum\ SAT_C}(#1)}}
\newcommand{\FV}[1]{\ensuremath{\mathrm{FV_C}(#1)}}
\newcommand{\USAT}[1]{\ensuremath{\mathrm{Unique\ SAT_C}(#1)}}
\newcommand{\AUDIT}[1]{\ensuremath{\mathrm{AUDIT_C}(#1)}}
\newcommand{\SATstar}[1]{\ensuremath{\mathrm{SAT_C^*}(#1)}}
\newcommand{\CsatB}{\ensuremath{\mathrm{SAT_C}(B)}}
\newcommand{\Csat}{\ensuremath{\mathrm{SAT_C}}}
\newcommand{\CsatStar}{\ensuremath{\mathrm{SAT_C^*}}}
\newcommand{\CsatStarB}{\ensuremath{\mathrm{SAT_C^*}(B)}}
\newcommand{\satS}{\ensuremath{\mathrm{SAT_C}(S)}}
\newcommand{\satStarS}{\ensuremath{\mathrm{SAT_C^*}(S)}}
\newcommand{\complexityclassname}[1]{\ensuremath{\mathrm{#1}}}
\newcommand{\DP}{\ensuremath{\mathrm{D}}^\ensuremath{\mathrm{P}}}
\newcommand{\coNP}{\complexityclassname{coNP}}
\renewcommand{\P}{\complexityclassname{P}}
\newcommand{\US}{\complexityclassname{US}}
\newcommand{\CVP}[1]{\ensuremath{\mathrm{CVP}(#1)}}
\newcommand{\SATf}[1]{\ensuremath{\mathrm{SAT}(#1)}}
\newcommand{\SATc}[1]{\ensuremath{\mathrm{SAT}_c(#1)}}
\newcommand{\pc}[1]{\ensuremath{\left[ #1 \right]}}
\newcommand{\enu}[3]{\ensuremath{{#1_{#2}},\allowbreak \dots,\allowbreak#1_{#3}}}
\newcommand\eqd{\eqdef}
\newcommand{\BF}{{\mathrm{BF}}\xspace}
\newcommand{\Mon}{\ensuremath{\mathrm{M}}\xspace}
\newcommand{\Nmon}{\ensuremath{\mathrm{M_1}}\xspace}
\newcommand{\Omon}{\ensuremath{\mathrm{M_0}}\xspace}
\newcommand{\Tmon}{\ensuremath{\mathrm{M_2}}\xspace}
\newcommand{\Self}{\ensuremath{\mathrm{D}}\xspace}
\newcommand{\Nself}{\ensuremath{\mathrm{D_1}}\xspace}
\newcommand{\Tself}{\ensuremath{\mathrm{D_2}}\xspace}
\newcommand{\Nrep}{\ensuremath{\mathrm{R_1}}\xspace}
\newcommand{\Orep}{\ensuremath{\mathrm{R_0}}\xspace}
\newcommand{\Rep}{\ensuremath{\mathrm{R_2}}\xspace}
\newcommand{\VelC}{\ensuremath{\mathrm{V}}\xspace}
\newcommand{\Tvel}{\ensuremath{\mathrm{V_2}}\xspace}
\newcommand{\Ovel}{\ensuremath{\mathrm{V_0}}\xspace}
\newcommand{\Nvel}{\ensuremath{\mathrm{V_1}}\xspace}
\newcommand{\EtC}{\ensuremath{\mathrm{E}}\xspace}
\newcommand{\Oet}{\ensuremath{\mathrm{E_0}}\xspace}
\newcommand{\Net}{\ensuremath{\mathrm{E_1}}\xspace}
\newcommand{\Tet}{\ensuremath{\mathrm{E_2}}\xspace}
\newcommand{\Sep}{{\mathrm{S}}\xspace}
\newcommand{\Osep}{\ensuremath{{\mathrm{S}_0}}\xspace}
\newcommand{\ORsep}{\ensuremath{{\mathrm{S}_{02}}}\xspace}
\newcommand{\OMsep}{\ensuremath{{\mathrm{S}_{01}}}\xspace}
\newcommand{\ORMsep}{\ensuremath{{\mathrm{S}_{00}}}\xspace}
\newcommand{\Nsep}{\ensuremath{{\mathrm{S}_1}}\xspace}
\newcommand{\NRsep}{\ensuremath{{\mathrm{S}_{12}}}\xspace}
\newcommand{\NMsep}{\ensuremath{{\mathrm{S}_{11}}}\xspace}
\newcommand{\NRMsep}{\ensuremath{{\mathrm{S}_{10}}}\xspace}
\newcommand{\Omsep}[1]{\ensuremath{{\mathrm{S}_0^{#1}}}\xspace}
\newcommand{\Nmsep}[1]{\ensuremath{{\mathrm{S}_1^{#1}}}\xspace}
\newcommand{\Lin}{\ensuremath{{\mathrm{L}}}\xspace}
\newcommand{\Olin}{\ensuremath{{\mathrm{L_0}}}\xspace}
\newcommand{\Nlin}{\ensuremath{{\mathrm{L_1}}}\xspace}
\newcommand{\Tlin}{\ensuremath{{\mathrm{L_2}}}\xspace}
\newcommand{\Rlin}{\ensuremath{{\mathrm{L_3}}}\xspace}
\newcommand{\Neg}{\ensuremath{{\mathrm{N}}}\xspace}
\newcommand{\Tneg}{\ensuremath{{\mathrm{N_2}}}\xspace}
\newcommand{\Ids}{\ensuremath{{\mathrm{I}}}\xspace}
\newcommand{\TIds}{\ensuremath{{\mathrm{I_2}}}\xspace}
\newcommand{\OIds}{\ensuremath{{\mathrm{I_0}}}\xspace}
\newcommand{\NIds}{\ensuremath{{\mathrm{I_1}}}\xspace}
\newcommand{\Cons}{\ensuremath{{\mathrm{C}}}\xspace}
\newcommand{\OCons}{\ensuremath{{\mathrm{C_0}}}\xspace}
\newcommand{\NCons}{\ensuremath{{\mathrm{C_1}}}\xspace}
\newcommand{\zvec}{0, \dots ,0}
\newcommand{\ovec}{1, \dots ,1}
\newcommand{\xvec}{x_1, \dots, x_n}
\newcommand{\dua}{{\mathrm{dual}}}
\newcommand{\redpeq}{\ensuremath{\equiv_m^p}\xspace}
\newcommand{\redl}{\ensuremath{\le_m^\mathrm{log}}\xspace}
\newcommand{\redleq}{\ensuremath{\equiv_{\mathrm{m}}^{\mathrm{log}}}}
\newcommand{\loeq}[0]{\ensuremath{\equiv}}
\newcommand{\loiso}[0]{\ensuremath{\cong}}
\newcommand{\PPrefixC}[0]{\ensuremath{\mathrm{C}}}
\newcommand{\parity}{\ensuremath{\oplus}}
\newcommand{\PCEQ}[1]{\ensuremath{\mathrm{EQ_\PPrefixC}(#1)}}
\newcommand{\PCISO}[1]{\ensuremath{\mathrm{ISO_\PPrefixC}(#1)}}
\newcommand{\GAP}{\ensuremath{\mathrm{GAP}}\xspace}
\newcommand{\CGAP}{\ensuremath{\mathrm{\overline{GAP}}}\xspace}
\newcommand{\GOAP}{\ensuremath{\mathrm{GOAP}}\xspace}
\newcommand{\CGOAP}{\ensuremath{\mathrm{\overline{GOAP}}}\xspace}
\newcommand{\PCVAL}[1]{\ensuremath{\mathrm{VAL_\PPrefixC}(#1)}}
\newcommand{\TTAUT}{\ensuremath{3\,\text{-}\TAUT}\xspace}
\newcommand{\PCSAT}[1]{\ensuremath{\mathrm{SAT}_\PPrefixC(#1)}}
\newcommand{\ParL}{\ensuremath{\parity\L}\xspace}
\newcommand{\range}[2]{\ensuremath{#1,\allowbreak \dots ,\allowbreak #2}}
\newcommand{\iffd}{\ensuremath{\Leftrightarrow}} 
\newcommand{\gdw}{\ensuremath{\text{ iff }}\xspace}
\newcommand{\boolf}[2]{\ensuremath{f_{#1}(#2)}}
\newcommand{\TDNF}[0]{\ensuremath{3}\,\text{-DNF}\xspace}
\newcommand{\define}{\ensuremath{=}} 
\newcommand{\NAND}{\ensuremath{{\mathit{nand}}}}
\newcommand{\AND}{\ensuremath{\land}}
\newcommand{\OR}{\ensuremath{\lor}}
\newcommand{\XOR}{\ensuremath{\oplus}}
\newcommand{\NOT}{\ensuremath{\neg}}
\newcommand{\EQ}{\ensuremath{\equiv}}
\newcommand{\IMP}{\ensuremath{\rightarrow}}
\newcommand{\czero}{0}
\newcommand{\cone}{1}
\newcommand{\ID}{\ensuremath{{\mathit{id}}}}
\newcommand{\TAUT}{\ensuremath{\mathrm{TAUT}}}
\newcommand{\EQB}{\ensuremath{\mathrm{EQ}}}
\newcommand{\caseDistinction}[1]
           {\left\{ 
            \begin{array}{l@{\quad}l}
              #1
            \end{array} \right. 
           }
\newcommand{\EFV}[1]{\ensuremath{\exists \ \mathrm{FV_C}(#1)}}
\newcommand{\SATP}{\ensuremath{\mathrm{SATP}}}
\newcommand{\sSATF}[1]{\ensuremath{\mathrm{Select SAT_F}(#1)}}
\newcommand{\countone}[1]{\ensuremath{\#_1(#1)}}
\newcommand{\countzero}[1]{\ensuremath{\#_0(#1)}}
\newcommand{\dual}[1]{\ensuremath{\mathrm{dual}(#1)}}
\newcommand{\uint}{\ensuremath{\mathbb{N}}}
\newcommand{\clone}[1]{\ensuremath{\left[ #1\right]}}
\newcommand{\redpm}{\ensuremath{\leq_{m}^{p}}}
\newcommand{\nmodels}{\ensuremath{\not\models}}
\newcommand{\clonename}[1]{\mathrm{#1}}
\newcommand{\suchthat}{\ensuremath{:}}
\newcommand{\cM}{\ensuremath{\clonename{M}}}
\newcommand{\cR}{\ensuremath{\clonename{R}}}
\newcommand{\cBF}{\ensuremath{\clonename{BF}}}
\newcommand{\cS}{\ensuremath{\clonename{S}}}
\newcommand{\cD}{\ensuremath{\clonename{D}}}
\newcommand{\cV}{\ensuremath{\clonename{V}}}
\newcommand{\cE}{\ensuremath{\clonename{E}}}
\newcommand{\cL}{\ensuremath{\clonename{L}}}
\newcommand{\cI}{\ensuremath{\clonename{I}}}
\newcommand{\cN}{\ensuremath{\clonename{N}}}

\begin{figure}
\begin{center}
\def\hline{\noalign{\hrule height.1pt}}
\def\Hline{\noalign{\hrule height.8pt}}
\begin{tabular}{lll}
\Hline
\textbf{Class} & \textbf{Definition} & \textbf{Base(s)} \\
\Hline
$\BF$ & all Boolean functions & $\{\land, \NOT\}$  \\ \hline
$\Orep$ & $\{\,f \in \BF\mid f$ is 0-reproducing\,\} & $\{\land, \XOR\}$ \\
\hline
$\Nrep$ & $\{\,f \in \BF\mid f$ is 1-reproducing\,\} & $\{\OR, x \oplus y \oplus
1$\} \\ \hline
$\Rep $ & $\Nrep \cap \Orep$ & $\{\OR, x \wedge (y\oplus z \oplus 1)$\} \\
\hline
$\Mon$ & $\{\,f \in \BF\mid f$ is monotonic\,\} & $\{\land, \OR, \czero, \cone$\}
\\ \hline
$\Nmon$ & $\Mon \cap \Nrep$ & $\{\land, \OR, \cone$\} \\ \hline
$\Omon$ & $\Mon \cap \Orep$ & $\{\land, \OR, \czero$\} \\ \hline
$\Tmon$ & $\Mon \cap \Rep$ & $\{\land, \OR\}$ \\ \hline
$\Sep^n_0$ & $\{\,f \in \BF\mid f$ is 0-separating of degree $n$\,\} & $\{\IMP,
\dua(t_n)\}$ \\ \hline
$\Sep_0$ & $\{\,f \in \BF\mid f$ is 0-separating\,\} & $\{\IMP\}$ \\ \hline
$\Sep^n_1$ & $\{\,f \in \BF\mid f$ is 1-separating of degree $n$\,\} & \{$x
\wedge \overline{y}$, $t_n$\} \\ \hline
$\Sep_1$ & $\{\,f \in \BF\mid f$ is 1-separating\,\} & \{$x \wedge
\overline{y}$\} \\ \hline
$\Sep^n_{02}$ & $\Sep^n_0 \cap \Rep$ & \{$x \vee (y \wedge
\overline{z}), \dua(t_n)\}$ \\ \hline
$\Sep_{02}$ & $\Sep_0 \cap \Rep$ & \{$x \vee (y \wedge \overline{z})$\} \\
\hline
$\Sep^n_{01}$ & $\Sep^n_0 \cap \Mon$ & $\{\dua(t_n), \cone\}$ \\ \hline
$\Sep_{01}$ & $\Sep_0 \cap \Mon$ & $\{x \vee (y \wedge z), \cone\}$ \\ \hline
$\Sep^n_{00}$ & $\Sep^n_0 \cap \Rep \cap \Mon$ & $\{x \vee (y \wedge z),
\dua(t_n)\}$ \\ \hline
$\Sep_{00}$ & $\Sep_0 \cap \Rep \cap \Mon$ & $\{x \vee (y \wedge z)\}$ \\ \hline
$\Sep^n_{12}$ & $\Sep^n_1 \cap \Rep$ & $\{x \wedge (y \vee \overline{z}), t_n\}$
\\ \hline
$\Sep_{12}$ & $\Sep_1 \cap \Rep$ & $\{x \wedge (y \vee \overline{z})\}$ \\
\hline
$\Sep^n_{11}$ & $\Sep^n_1 \cap \Mon$ & $\{t_n, \czero\}$ \\ \hline
$\Sep_{11}$ & $\Sep_1 \cap \Mon$ & $\{x \wedge (y \vee z), \czero\}$ \\ \hline
$\Sep^n_{10}$ & $\Sep^n_1 \cap \Rep \cap \Mon$ & $\{x \wedge (y \vee z), t_n\}$
\\ \hline
$\Sep_{10}$ & $\Sep_1 \cap \Rep \cap \Mon$ & $\{x \wedge (y \vee z)\}$ \\ \hline
$\Self$ & $\{\,f\mid f$ is self-dual\,\} & $\{(x\wedge\overline{y}) \vee (x\wedge\overline{z})
\vee (\overline{y}\wedge\overline{z})\}$ \\ \hline
$\Nself$ & $\Self \cap \Rep$ & $\{(x\wedge y) \vee (x\wedge\overline{z}) \vee (y\wedge\overline{z})\}$ \\
\hline
$\Tself$ & $\Self \cap \Mon$ & $\{(x\wedge y) \vee (y\wedge z) \vee (x\wedge z)\}$ \\ \hline
$\Lin$ & $\{\,f\mid$ $f$ is linear\} &
      $\{\XOR, \cone\}$ \\ \hline
$\Olin$ & $\Lin \cap \Orep$ & $\{\XOR\}$ \\ \hline
$\Nlin$ & $\Lin \cap \Nrep$ & $\{\leftrightarrow\}$ \\ \hline
$\Tlin$ & $\Lin \cap \Rep$ & $\{x \oplus y \oplus z\}$ \\ \hline
$\Rlin$ & $\Lin \cap \Self$ & $\{x \oplus y \oplus z \oplus \cone\}$ \\ \hline
$\VelC$ & $\{\,f\mid$ $f$ is an $\OR$-function or a constant function\} &
      $\{\OR, \czero, \cone\}$ \\ \hline
$\Ovel$ & $[\{\OR\}] \cup [\{\czero\}]$ & $\{\OR, \czero\}$ \\ \hline
$\Nvel$ & $[\{\OR\}] \cup [\{\cone\}]$ & $\{\OR, \cone\}$ \\ \hline
$\Tvel$ & $[\{\OR\}]$ & $\{\OR\}$ \\ \hline
$\EtC$ & $\{\,f\mid$ $f$ is an $\land$-function or a constant function\} &
      $\{\land, \czero, \cone\}$ \\ \hline
$\Oet$ & $[\{\land\}] \cup [\{\czero\}]$ & $\{\land, \czero\}$ \\ \hline
$\Net$ & $[\{\land\}] \cup [\{\cone\}]$ & $\{\land, \cone\}$ \\ \hline
$\Tet$ & $[\{\land\}]$ & $\{\land\}$ \\ \hline
$\Neg$ & $[\{\NOT\}] \cup [\{\czero\}] \cup [\{\cone\}]$ & $\{\NOT, \cone\}$,
$\{\NOT, \czero\}$ \\ \hline
$\Tneg$ &  $[\{\NOT\}]$ & $\{\NOT\}$ \\ \hline
$\Ids$ & $[\{\ID\}] \cup [\{\cone\}] \cup [\{\czero\}]$ & $\{\ID, \czero, \cone\}$
\\ \hline
$\OIds$ & $[\{\ID\}] \cup [\{\czero\}]$ & $\{\ID, \czero\}$ \\ \hline
$\NIds$ & $[\{\ID\}] \cup [\{\cone\}]$ & $\{\ID, \cone\}$ \\ \hline
$\TIds$ & $[\{\ID\}]$ & $\{\ID\}$ \\ \Hline
\end{tabular}
\caption{The list of all Boolean clones with definitions and bases, where $t_n
:= \bigvee^{n+1}_{i=1}\bigwedge^{n+1}_{j=1,j\neq i} x_j$ and
      $\dual{f}(a_1, \dots , a_n) = \neg f(\neg a_1 \dots , \neg a_n)$.}
\label{Bases}
\end{center}
\end{figure}

\end{document}